\documentclass[11pt,a4paper]{article}
\usepackage[utf8]{inputenc}
\usepackage[T1]{fontenc}
\usepackage{amsmath}
\usepackage{amsfonts}
\usepackage{amssymb}
\usepackage{dsfont}
\usepackage{fullpage}
\usepackage{graphicx}
\usepackage{hyperref}
\usepackage{color}
\usepackage{mathrsfs}
\usepackage{enumitem}
\usepackage{listings}
\usepackage{amsthm}
\usepackage{listings}
\usepackage[title]{appendix}
\usepackage{comment}
\lstset{numbers=left, stepnumber=2}

\numberwithin{equation}{section}
\numberwithin{figure}{section}
\theoremstyle{plain}
\newtheorem{thm}{\protect\theoremname}[section]
  \theoremstyle{remark}
  \newtheorem{rem}[thm]{\protect\remarkname}
  \theoremstyle{definition}
  
  \theoremstyle{plain}
  \newtheorem{lem}[thm]{\protect\lemmaname}
  \theoremstyle{plain}
  \newtheorem{prop}[thm]{\protect\propositionname}
  \theoremstyle{remark}
  
  \theoremstyle{remark}
  
  \theoremstyle{remark}
  
    \theoremstyle{plain}
  \newtheorem{cor}[thm]{\protect\corollaryname}

 \providecommand{\claimname}{Claim}
  \providecommand{\definitionname}{Definition}
  \providecommand{\lemmaname}{Lemma}
  \providecommand{\observationname}{Observation}
  \providecommand{\propositionname}{Proposition}
  \providecommand{\remarkname}{Remark}
  \providecommand{\examplename}{Example}
\providecommand{\theoremname}{Theorem}
\providecommand{\corollaryname}{Corollary}

\newcommand{\Z}{\mathbb{Z}}

\newcommand{\R}{\mathbb{R}}
\newcommand{\N}{\mathbb{N}}
\newcommand{\E}{\mathbb{E}}
\renewcommand{\P}{\mathbb{P}}
\newcommand{\h}{\mathcal{H}}
\newcommand{\s}{\sigma}
\newcommand{\e}{\eta}
\newcommand{\z}{\zeta}

\newcommand{\el}{\ell}

\newcommand{\Cov}{\textup{Cov}}
\newcommand{\Var}{\textup{Var}}
\newcommand{\one}{\mathbf{1}}
\newcommand{\oneL}{{\mathbf{1}_L}}
\def\restriction#1#2{\mathchoice
              {\setbox1\hbox{${\displaystyle #1}_{\scriptstyle #2}$}
              \restrictionaux{#1}{#2}}
              {\setbox1\hbox{${\textstyle #1}_{\scriptstyle #2}$}
              \restrictionaux{#1}{#2}}
              {\setbox1\hbox{${\scriptstyle #1}_{\scriptscriptstyle #2}$}
              \restrictionaux{#1}{#2}}
              {\setbox1\hbox{${\scriptscriptstyle #1}_{\scriptscriptstyle #2}$}
              \restrictionaux{#1}{#2}}}
\def\restrictionaux#1#2{{#1\,\smash{\vrule height .8\ht1 depth .85\dp1}}_{\,#2}} 

\usepackage{pgfplots}
\usetikzlibrary{calc}
\usetikzlibrary{patterns}
\definecolor{myblue}{rgb}{0,0.5,1}
\definecolor{mypink}{rgb}{1,0,1}
\definecolor{mygreen}{rgb}{0,0.6,0}
 \pgfdeclarepatternformonly{north west spaced lines}{\pgfqpoint{-1pt}{-1pt}}{\pgfqpoint{10pt}{10pt}}{\pgfqpoint{9pt}{9pt}}%
{
  \pgfsetlinewidth{0.4pt}
  \pgfpathmoveto{\pgfqpoint{0pt}{10pt}}
  \pgfpathlineto{\pgfqpoint{10.1pt}{-0.1pt}}
  \pgfusepath{stroke}
}

\begin{document}

\begin{LARGE}
\begin{center}
\textbf{Cutoff for the Fredrickson-Andersen one spin facilitated model}
\end{center}
\end{LARGE}

\begin{Large}
\begin{center}
Anatole Ertul \footnote{Institut Camille Jordan, Universit\'e Lyon 1, France.
    \textit{ertul@math.univ-lyon1.fr}}
\end{center}
\end{Large}

\begin{abstract}
The Fredrickson-Andersen one spin facilitated model belongs to the class of Kinetically Constrained Spin Models. It is a non attractive process with positive spectral gap. In this paper we give a precise result on the relaxation for this process on an interval $[1,L]$ starting from any initial configuration. A consequence of this result is that this process exhibits cutoff at time $L/(2v)$ with window $O(\sqrt{L})$ for a certain positive constant $v$. The key ingredient is the study of the evolution of the leftmost empty site in a filled infinite half-line called the front. In the process of the proof, we improve recent results about the front motion by showing that it evolves at speed $v$ according to a uniform central limit theorem.
\end{abstract}

\section{Introduction}

The Fredrickson--Andersen one spin facilitated model (FA-1f) model is an interacting particle system that belongs to the class of kinetically constrained spin models (KCSM). These models are Markov processes that were first introduced in the 1980's by physicists to model liquid-glass transitions (see \cite{FA84} and \cite{JE}) and that have some analogy with the Glauber dynamics of the Ising model. In a KCSM on a graph $G$, each vertex can either have a particle or be empty. At rate 1, each site tries to update according to a Bernoulli measure, but does so only if a local constraint is satisfied. On the graph $\Z$, the constraint can for example be that the site immediately to the right is empty, in which case we get the \emph{East model}. Another possible constraint is to have either of the adjacent neighboring sites empty, which defines the FA-1f model.\\

Since the 2000's, the dynamics of KCSM both at and out of equilibrium have been thoroughly studied by the mathematical community. In 2002, Aldous and Diaconis \cite{AD02} proved that the East model has a positive spectral gap. This result has been generalized to a large class of models in \cite{CMRT08}. Starting out of equilibrium studies are made more difficult by the lack of attractiveness of the processes, preventing usual monotonicity and coupling arguments. Some results can be found for example in \cite{CMST} for the East model and \cite{MV, BCM13} for FA-1f at low density. For the East model and the FA-1f model on $\Z$, an interesting topic of study is the motion of the front, i.e. the rightmost empty site of a configuration filled on an infinite half-line. First, Blondel \cite{B13} showed that the front has a linear speed in the East model. Based on this work, Ganguly, Lubetzky and Martinelli showed in \cite{GLM} a central limit theorem for the front, which was followed by the proof that the East model exhibits cutoff thanks to a simple coupling argument. In \cite{BDT19}, it was shown that the front for the FA-1f process also behaves according to a CLT when the density is below a certain threshold. The aim of this paper is to use this last result to prove a cutoff for the FA-1f process. It will require much more work than for East because the main coupling argument \cite[Section 4.2]{GLM} no longer holds for FA-1f.\\

The cutoff phenomenon was first exhibited by Aldous and Diaconis in the context of card shuffling \cite{AD86}. It consists of a sharp drop of the total variation distance to the equilibrium measure of a Markov process (see \cite{LPW} for an introduction to this phenomenon). Since then, examples and counter-examples of cutoff have be shown for a wide variety of processes, but a universal criterion is missing. In 2004, Peres conjectured that a process exhibits cutoff if and only if it satisfies the \emph{product condition} $t_{rel} = o(t_{mix})$, where $t_{rel}$ is the inverse of the spectral gap and $t_{mix}$ is the total variation mixing time of the process. This condition turned out not to be sufficient in general but sufficient for a large class of processes (see \cite{BHP} and more recently \cite{S21}). Recently, several cutoff results have been shown for particle systems like the Ising model \cite{LS}, the Asymmetric Simple Exclusion Process \cite{LL} or a stratified random walk on the hypercube \cite{BP}.\\

In this paper, we will study in detail the relaxation of the FA-1f process depending on the initial configuration. We mainly rely on a result giving a bound on the mixing time for a configuration with many empty sites. The core of our study will thus be to see how quickly any initial configuration can create enough empty sites. For that, we study the big intervals that are initially filled with particles, and interpret the endpoint of those as fronts going inward at speed $v$. Thanks to another result referred to as "Zeros Lemma", we will be able to show that the configuration becomes suitable for relaxation after roughly the time it takes for the fronts of the biggest particle cluster to meet. Finally, our main result gives precise bounds on the mixing time of the process for \emph{any} initial configuration, which is a more complete result than just a cutoff statement.\\
  
First, we give the definition of the model and our main result in Section \ref{sec:2}, then an important coupling with a threshold contact process in Section \ref{sec:3} and results about relaxation to equilibrium in Section \ref{sec:4}. In Section \ref{sec:5}, we extend the central limit theorem proved in \cite{BDT19} in order to fit the actual proof of the main result, which appears in Section \ref{sec:6}.

\section{Definitions and main result}\label{sec:2}

\subsection{FA-1f processes}

We will encounter three types of FA-1f processes, depending on their state space. Fix $q \in [0,1]$. The FA-1f process of parameter $q$ on an interval $\Lambda \subset \Z$ is given by the generator:
$$\mathcal{L}f(\s) = \sum_{x\in \Lambda} r(x,\s)(f(\s^x) - f(\s)),$$
for any local function $f$ and $\s \in \{0,1\}^\Z$, where $\s^x$ is the configuration equal to $\s$ everywhere except at site $x$. The rate $r(x,\s)$ is given by:
$$r(x,\s) = (1-\s(x-1)\s(x+1))(q\s(x) + (1-q)(1-\s(x))).$$
$\s(x)=1$ is to be interpreted as the presence of a particle at $x$, and $\s(x)=0$ as an empty site. In words, every site makes a flip $0 \rightarrow 1$ at rate $1-q$ and $1 \rightarrow 0$ at rate $q$ but only if it satisfies the kinetic constraint $c_x(\s) := (1-\s(x-1)\s(x+1))$ is equal to $1$, that is if it has a least one empty neighbor. If $\Lambda$ has boundaries, we set the configuration to be fixed at $0$ at the (outer) boundaries.  We now fix some notations and conventions used throughout the paper.

\begin{itemize}
\item FA-1f processes on $\Z$ are generally denoted by the letter $\s$. Their state space is $\Omega := \{0,1\}^{\Z}$. We define $LO = \{\s \in \Omega \, | \, \exists X, \forall x<X, \s(x)=1\}$ the set of configurations equal to one on an infinite left-oriented half line.
\item FA-1f processes on $\Z_- := \{-1,-2,...\}$ are generally denoted by the letter $\e$. Their state space is $\Omega_- := \{\e \in \Omega \ |\ \e(0) = 0, \ \forall x \ge 0, \e(x) = 1 \}$. We define $LO_- = LO \cap \Omega_-$.
\item For a finite interval $\Lambda = [1,L]$, we define $\Omega_L = \{\s \in \Omega \ | \ \s(0) = \s(L+1) = 0, \ \forall x \notin [0,L+1], \s(x) = 1 \}$.
\end{itemize}

For $\s \in \{0,1\}^\Z$ and $x \in \Z$, we define the shifted configuration $\theta_x \s$ by: $$\forall y \in \Z, \ \theta_x \s(y) = \s(y+x).$$

For any $\s \in LO$, we define the front $X(\s)$ (or sometimes simply written $X$) as the leftmost zero in $\s$. Note that if the process is defined on an interval $\Lambda \ne \Z$, the front can be an outer boundary of $\Lambda$. When $(\s_t)_{t\ge0}$ is a process, we will write $X_t$ instead of $X(\s_t)$ when $\s_t$ clear from the context. We also denote by $\tilde{\s}$ the configuration seen from the front, namely $\theta_{X(\s)}\s$.\\

We also define $\mu^{\s}_t$ (resp.\ $\tilde{\mu}^{\s}_t$) the law of the FA-1f process (resp.\ seen from the front) at time $t$ starting from $\s$. Whether the process takes place in $\Z$ or $\Z_-$ is implicitly given by the configuration $\s$. We denote by $\mu^p_{\Lambda}$ the Bernoulli product law of parameter $p$ on $\{0,1\}^\Lambda$. The FA-1f process on $\Lambda$ with parameter $q$ is reversible with respect to $\mu^{1-q}_\Lambda$. In the following, we shall write $\mu$ instead of $\mu^{1-q}_\Lambda$ when $q$ and $\Lambda$ are clearly fixed.

Finally we define, for $I = [a,b] \subset \Lambda$ an interval and $\el \ge 10$ the set:
$$\h(I,\el) = \h(a,b,\el) := \{ \s \in \{0,1\}^\Lambda |\, \forall x\in [a,b-\el+1],  \exists y\in [x,x+\el-1], \ \s(y)=0  \}.$$ 
If $J = \bigcup_k I_k$ is a union of disjoint non adjacent intervals, then we define $\h(J,\el) := \bigcap_k \h(I_k,\el)$.
In words, in a configuration in $\h(I,\el)$, every site is at distance at most $\el -1$ to either the right boundary of $I$ or to an empty site to its right. In such a configuration, two consecutive empty sites are within distance $\el$.

\subsection{Main result}

We denote $\Omega^\delta_L = \{ \s \in \Omega_L \ | \ \s \in \h(1,L,\delta L) \}$. We also define, for $\s \in \Omega_L$, $$B(\s) := \max \{ h \ge 0 \ | \ \exists x \in [0,L-h], \ \restriction{\s}{[x+1,x+h]} \equiv 1 \}$$ the size of the largest component of occupied sites in $\s$. The most precise result about the relaxation of the FA-1f process in a finite interval proved in this paper is the following. Recall the usual notations for real numbers: $a \wedge b = \max(a,b), \, a \vee b = \min(a,b)$.
\begin{thm}\label{mainthm}
There exists $\bar{q} < 1$ such that for every $q > \bar{q}$, $\delta \in (0,1)$ and $\varepsilon > 0$, the following holds. There exist three constants $a = a(\varepsilon,q) > 0$ and  $0 < \underline{v} = \underline{v}(q) < v = v(q) $ such that, if we first define the following three times for any $\s \in \Omega_L$:
\begin{align*}
t_1(\s) &= \left( \dfrac{B(\s)}{\underline{v}}\right) \vee \left( \dfrac{(\log L)^9}{\underline{v}}\right), \\
t_2(\s) &= \dfrac{B(\s)}{2v} + \dfrac{a}{\underline{v}\delta}\sqrt{B(\s)}, \\
t_3(\s) &= \dfrac{B(\s)}{2v} - \dfrac{a}{v} \sqrt{B(\s)},\\
\end{align*}

\noindent then:

 \begin{equation}\label{eq:mainthm1}
\underset{L \rightarrow +\infty}{\limsup} \ \underset{\s \in \Omega^\delta_L}{\sup} ||\mu^\s_{t_1(\s)} - \mu ||_{TV} = 0,
\end{equation}

\begin{equation}\label{eq:mainthm2}
\underset{L \rightarrow +\infty}{\limsup} \ \underset{\s \in (\Omega^\delta_L)^c}{\sup} ||\mu^\s_{t_2(\s)} - \mu ||_{TV} \le \dfrac{\varepsilon}{\delta^2}.
\end{equation}

Moreover, for any function $\Phi$ such that $\Phi(L) \underset{L \rightarrow +\infty}{\longrightarrow} + \infty$, 
\begin{equation}\label{eq:mainthm3}
\underset{L \rightarrow +\infty}{\liminf} \ \underset{\s \in \h(1,L,\Phi(L))^c}{\inf} ||\mu^\s_{t_3(\s)} - \mu ||_{TV} \ge 1 - \varepsilon.
\end{equation}

\end{thm}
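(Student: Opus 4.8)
The plan is to prove the three statements in sequence, since the later ones build on the coupling philosophy of the earlier ones but require progressively more control. For \eqref{eq:mainthm1}, the starting configuration already lies in $\Omega^\delta_L$, i.e.\ it is in $\h(1,L,\delta L)$, so it has roughly one empty site per interval of length $\delta L$; this is precisely the "many empty sites" regime in which the relaxation result of Section \ref{sec:4} is expected to give a mixing-time bound of order $\log L$ times a constant depending on $\delta$. I would invoke that result directly: the time $t_1(\s)$ is $\Theta((\log L)^9)$ (since $B(\s) \le L$ but the $\h$ condition forces $B(\s) \lesssim \delta L$, and more importantly the $(\log L)^9$ term dominates whenever $B(\s)$ is not already large, while if $B(\s)$ is large the process still relaxes because the complement of the big block is full of zeros), and this exceeds the relaxation-type mixing bound. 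The one thing to check carefully is uniformity over $\s \in \Omega^\delta_L$: the bound from Section \ref{sec:4} must depend on $\s$ only through $\delta$, not through finer features of $\s$.

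For \eqref{eq:mainthm2}, the configuration $\s \notin \Omega^\delta_L$ has a block of consecutive occupied sites of some size $B(\s)$ (at least $\delta L$, else $\s$ would be in $\h(1,L,\delta L)$, up to the easier case analysis). The strategy is: interpret the two endpoints of this maximal block as fronts of the FA-1f process on half-lines, moving inward. By the uniform CLT for the front proved in Section \ref{sec:5}, each front has traversed distance $\approx vt$ with Gaussian fluctuations of order $\sqrt{t}$, so after time $B(\s)/(2v) + (a/(\underline v\delta))\sqrt{B(\s)}$ the two fronts have, with probability $\ge 1 - \varepsilon/\delta^2$ (Chebyshev/CLT on the sum of the two front displacements, the $1/\delta^2$ coming from the variance scaling), met and "eaten through" the block. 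At that point — and here the "Zeros Lemma" of Section \ref{sec:4} enters — the configuration has accumulated enough empty sites throughout $[1,L]$ that it lies in some $\Omega^{\delta'}_L$-type good set, and then I apply \eqref{eq:mainthm1} (or directly the Section \ref{sec:4} relaxation bound) to the remaining time; the extra $(a/(\underline v\delta))\sqrt{B(\s)}$ budget, being $\gg (\log L)^9$ once $B(\s) \gg (\log L)^{18}$, absorbs that final relaxation phase, while the small-$B(\s)$ case is again handled by $t_1$-type reasoning. The coupling must be set up so that the FA-1f dynamics on $[1,L]$ dominates (in the relevant sense) the two independent half-line front processes until they meet; this is exactly where FA-1f's lack of attractiveness bites, and one must use the finite-speed-of-information and the contact-process comparison of Section \ref{sec:3} rather than a naive monotone coupling.

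For the lower bound \eqref{eq:mainthm3}, the idea is a distinguishing-statistic argument. If $\s \notin \h(1,L,\Phi(L))$ then $\s$ has a run of occupied sites of length $\ge \Phi(L) \to \infty$, hence $B(\s) \to \infty$. Consider the indicator (or the empirical count) of the event that the "center" of this block — the site reached last by the two inward-moving fronts — is still occupied. Under $\mu = \mu^{1-q}_{[1,L]}$ this event has probability bounded away from $1$; under $\mu^\s_{t_3(\s)}$, because $t_3(\s) = B(\s)/(2v) - (a/v)\sqrt{B(\s)}$ is less than the meeting time of the two fronts by $\approx (a/v)\sqrt{B(\s)}$ standard deviations, the CLT lower tail shows that with probability $\ge 1-\varepsilon$ the fronts have not yet reached the center, so it is still occupied (the region strictly between the two fronts stays frozen at $1$ — this monotone fact \emph{does} hold, since a site with both neighbors occupied cannot flip). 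The gap between these two probabilities, made as large as $1-\varepsilon$ by choosing $a$ large, gives a total-variation lower bound of $1-\varepsilon$ via the standard two-point (Hoeffding-type) estimate. I expect the main obstacle to be \eqref{eq:mainthm2}: setting up a coupling, valid for FA-1f despite non-attractiveness, that lets the two half-line fronts act independently and simultaneously inside $[1,L]$ until they meet, and then cleanly transferring control to the Zeros Lemma and the Section \ref{sec:4} relaxation estimate with fully uniform constants — in particular making sure the $\varepsilon/\delta^2$ error is not degraded by the hand-off between phases.
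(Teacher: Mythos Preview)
Your outlines for \eqref{eq:mainthm1} and \eqref{eq:mainthm3} are essentially the paper's arguments: for \eqref{eq:mainthm1} the paper applies Proposition~\ref{relaxationfinie2} directly, splitting on whether $B(\s)\le(\log L)^9$ or not; for \eqref{eq:mainthm3} it uses the uniform CLT on the two fronts of the largest block to show that an interval of length $\sim a\sqrt{B(\s)}$ around its midpoint is still identically $1$ at time $t_3(\s)$, which is a distinguishing event of $\mu$-probability $(1-q)^{2a\sqrt{B(\s)}}\to 0$.

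There is, however, a genuine gap in your plan for \eqref{eq:mainthm2}. You treat only \emph{the} maximal occupied block and its two fronts, and you attribute the factor $\varepsilon/\delta^2$ to ``variance scaling'' of a single CLT application. Neither point survives scrutiny. A configuration $\s\in(\Omega^\delta_L)^c$ has $B(\s)\ge\delta L$, but it can simultaneously have several other occupied blocks of size comparable to $B(\s)$ --- up to roughly $v/(\underline v\,\delta)$ of them. After time $B(\s)/(2v)$ the maximal block's fronts have met, but to land in $\h(\Lambda,d)$ (so that Proposition~\ref{relaxationfinie2} applies) you need \emph{every} such block to have been eaten through, and you need zeros to have propagated through the gaps between them. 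The paper handles this by introducing all clusters $\Lambda_1,\dots,\Lambda_r$ of size at least $(\underline v/v)B(\s)$ and running an \emph{iterative} scheme: at each step one applies the CLT to every surviving cluster's two fronts simultaneously (Lemma~\ref{lem:TCL}), uses the Zeros Lemma behind those fronts (Lemma~\ref{lem:zerosbehind}), checks that already-good intervals stay good (Lemma~\ref{lem:goodstaysgood}), and concludes that the number of surviving clusters strictly decreases (Lemma~\ref{lem:pdecroissant}). After at most $r$ iterations all clusters are gone, the total elapsed time telescopes to $B(\s)/(2v)+O(r\sqrt{B(\s)})$, and the gaps between clusters --- which by construction have length $\le(\underline v/v)B(\s)$ --- are handled separately by the contact-process comparison in that same time budget. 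Each iteration costs $r\varepsilon$ via a union bound over the $r$ clusters, and there are $r$ iterations, so the total probability loss is $r^2\varepsilon$; with $r\le v/(\underline v\,\delta)$ this is where the $\varepsilon/\delta^2$ actually comes from. Your single-block account misses both the multi-cluster bookkeeping and the true origin of the $\delta^{-2}$, and without them the ``hand-off'' to the relaxation phase that you correctly flag as delicate cannot be completed.
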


Let us give some comments about this statement. First, the two constants $\underline{v}$ and $v$ have the following interpretations. The constant $\underline{v}$ is the speed of the infection propagation for a threshold contact process defined in the following Section. The constant $v$ corresponds to the speed of the front for the FA-1f process on $\Z$ starting with an infinitely filled half-line, that is with initial configuration in $LO$. Both $\underline{v}$ and $v$ depend on $q$ that we chose here to be greater to a certain threshold $\bar{q}$ necessary for the threshold contact process to survive. As we will see, the value $\bar{q}$ is roughly equal to $0.76$ \cite{BFM}.
The first two equations \eqref{eq:mainthm1} and \eqref{eq:mainthm2} of Theorem \ref{mainthm} give an upper bound on the time at which the process is well mixed. The first one works nicely for initial configurations that already have enough empty sites. Indeed, in this case $B(\s)$ is smaller than $\delta L$, making up for the loss in the constant $1/(2\underline{v})$ instead of the expected $1/(2v)$. The time $t_1$ has to be bounded from below by $(\log L)^9$ for technical reasons we will see later. This bound is most likely not optimal but still offers a precise upper bound on the mixing time. The second equation handles the configurations that have macroscopic components of occupied sites and has the optimal leading behaviour $\frac{B(\s)}{2v}$ one can expect as shown in the last equation \eqref{eq:mainthm3}. The last equation gives a lower bound on the mixing time for configurations with at least an interval of size $\Phi(L)$ occupied. Since it is only required that $\Phi(L)$ goes to infinity, this hypothesis is not really restrictive. It is however necessary for our argument (Theorem \ref{TCL}) to work.
As a consequence of this theorem, we can conclude that the FA-1f process exhibits a cutoff.

\begin{thm}\label{thm:cutoff}
Let $d(t) = \underset{\s \in \Omega_\Lambda}{\sup} || \mu_t^\s - \mu ||_{TV}$. Then for all $q > \bar{q}$ and $\varepsilon > 0$, there exist $\alpha(\varepsilon,q), \beta(\varepsilon, q) > 0$ independent of $L$, such that for $L$ large enough:
\begin{align*}
d\left(\frac{L}{2v} - \alpha(\varepsilon)\sqrt{L}\right) &\ge 1 - \varepsilon,\\
d\left(\frac{L}{2v} + \beta(\varepsilon)\sqrt{L}\right) &\le \varepsilon.
\end{align*}
\end{thm}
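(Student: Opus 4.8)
The plan is to deduce Theorem~\ref{thm:cutoff} directly from Theorem~\ref{mainthm}, choosing the free parameter $\delta$ as a function of $q$ alone, applying Theorem~\ref{mainthm} with a suitably shrunk precision parameter, and then splitting the supremum defining $d(t)$ according to whether the initial configuration lies in $\Omega^\delta_L$ or in $(\Omega^\delta_L)^c$. The only auxiliary input is the elementary monotonicity fact that, for every fixed $\s$, the map $t\mapsto||\mu^\s_t-\mu||_{TV}$ is non-increasing (the Markov semigroup is a contraction in total variation and fixes $\mu$); hence $d$ is non-increasing, which is what lets us replace the configuration-dependent times $t_i(\s)$ by a single deterministic time. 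Concretely: fix $q>\bar q$, so that $\underline v=\underline v(q)<v=v(q)$ are determined; set $\delta:=\underline v/(2v)\in(0,\tfrac12)$, and apply Theorem~\ref{mainthm} with this $\delta$ and with $\varepsilon$ replaced by $\varepsilon_0:=\varepsilon\delta^2/2$, producing a constant $a=a(\varepsilon_0,q)$.

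For the upper bound I would set $\beta=\beta(\varepsilon,q):=a/(\underline v\delta)$. If $\s\in\Omega^\delta_L$, then directly from the definition of $\h(1,L,\delta L)$ one has $B(\s)<\delta L$, hence $B(\s)/\underline v<L/(2v)$; combined with $(\log L)^9/\underline v=o(L)$ this gives $t_1(\s)\le L/(2v)\le L/(2v)+\beta\sqrt L$ for $L$ large, so monotonicity together with \eqref{eq:mainthm1} yields $\sup_{\s\in\Omega^\delta_L}||\mu^\s_{L/(2v)+\beta\sqrt L}-\mu||_{TV}\le\sup_{\s\in\Omega^\delta_L}||\mu^\s_{t_1(\s)}-\mu||_{TV}\to 0$. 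If $\s\in(\Omega^\delta_L)^c$, then $B(\s)\le L$ forces $t_2(\s)\le L/(2v)+(a/(\underline v\delta))\sqrt L=L/(2v)+\beta\sqrt L$, so monotonicity together with \eqref{eq:mainthm2} gives $\limsup_{L}\sup_{\s\in(\Omega^\delta_L)^c}||\mu^\s_{L/(2v)+\beta\sqrt L}-\mu||_{TV}\le\varepsilon_0/\delta^2=\varepsilon/2$. Since $\Omega_L$ is the disjoint union of these two sets, $\limsup_L d(L/(2v)+\beta\sqrt L)\le\varepsilon/2$, hence $d(L/(2v)+\beta\sqrt L)\le\varepsilon$ for $L$ large.

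For the lower bound I would take $\alpha=\alpha(\varepsilon,q):=a/v$, pick any $\Phi$ with $\Phi(L)\to\infty$ and $\Phi(L)\le L$ (say $\Phi(L)=\lceil\sqrt L\rceil$), and test against the configuration $\s^\star\in\Omega_L$ occupied on all of $[1,L]$: it satisfies $B(\s^\star)=L$ and lies in $\h(1,L,\Phi(L))^c$, since it has no empty site in $[1,L]$. Thus $t_3(\s^\star)=L/(2v)-(a/v)\sqrt L=L/(2v)-\alpha\sqrt L$, and \eqref{eq:mainthm3} (applied to this particular configuration, which belongs to the set over which the infimum is taken) gives $\liminf_L||\mu^{\s^\star}_{t_3(\s^\star)}-\mu||_{TV}\ge 1-\varepsilon_0\ge 1-\varepsilon/2$; since $d(L/(2v)-\alpha\sqrt L)\ge||\mu^{\s^\star}_{t_3(\s^\star)}-\mu||_{TV}$, we conclude $d(L/(2v)-\alpha\sqrt L)\ge 1-\varepsilon$ for $L$ large. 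I do not expect any real obstacle in this deduction, since all the substance is already contained in Theorem~\ref{mainthm}; the only points needing care are purely bookkeeping, namely fixing $\delta$ as a function of $q$ before choosing $\varepsilon_0$ as a function of $\varepsilon$ and $\delta$ (to avoid circularity), verifying the elementary bound $B(\s)<\delta L$ on $\Omega^\delta_L$, and tracking the harmless factor $\delta^2$ so that the right-hand side of \eqref{eq:mainthm2} is genuinely below $\varepsilon$.
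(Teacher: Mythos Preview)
Your proof is correct and follows essentially the same route as the paper: fix $\delta$ as a function of $q$, split $\Omega_L$ into $\Omega^\delta_L$ and its complement, bound the configuration-dependent times $t_1(\s),t_2(\s)$ by $L/(2v)+\beta\sqrt L$, and use the all-ones configuration for the lower bound via \eqref{eq:mainthm3}. Your write-up is in fact slightly more careful than the paper's on two points: you make the monotonicity of $t\mapsto\|\mu^\s_t-\mu\|_{TV}$ explicit, and you rescale $\varepsilon$ to $\varepsilon_0=\varepsilon\delta^2/2$ to absorb the $1/\delta^2$ factor in \eqref{eq:mainthm2}; also, your choice $\delta=\underline v/(2v)$ (rather than the paper's $\underline v/v$) is the one actually consistent with the formula $t_1(\s)=(B(\s)/\underline v)\vee((\log L)^9/\underline v)$ stated in Theorem~\ref{mainthm}.
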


\section{Coupling with a contact process and consequences}\label{sec:3}

\subsection{Graphical construction}

Although we can easily define the FA-1f process through its generator, we also provide a more convenient construction called the graphical construction. For each site $x \in \Z$, define a Poisson process $T^x$ of parameter $1$ and an infinite sequence of Bernoulli random variables $(\beta^x_n)_{n \ge 1}$ of parameter $1-q$, all of these variables being independent. Given an initial configuration $\s_0$, at each increment $t$ of a Poisson process, say the $n^{\textup{th}}$ increment at site $x$, we check if the constraint $c_x(\s)$ is satisfied, that is if the site $x$ has at least one empty neighbor. If it does, then we set $\s_{t}(x) = \beta^x_n$. Otherwise, $\s_{t}(x)$ is unchanged.\\
\indent From this construction, we can define the \emph{standard coupling} which simply consists in taking the same set of random variables $(T^x)_{x \in \Z}$ and $(\beta^x_n)_{n \ge 1}$ for different initial configurations. Note that this coupling is not monotone: one can have $\s_0 \le \s'_0$ and $\s_t \not\le \s'_t$ even if $(\s_t)_{t\ge 0}$ and $(\s'_t)_{t \ge 0}$ follow the standard coupling (see Figure \ref{fig:monot}). This is an important reason why this dynamics can be difficult to study.

\begin{figure}[!h] 
\centering
\includegraphics[scale=0.4]{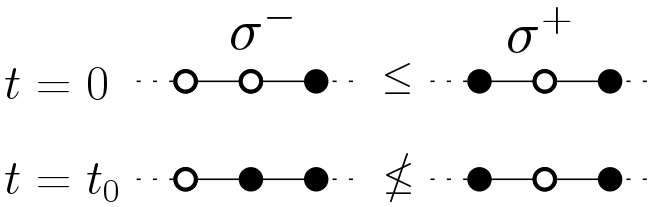} 
\caption{\label{fig:monot} An example of non monotonicity via the standard coupling. The middle site could update in $\s_-$ but not in $\s_+$.}
\end{figure}

Using the graphical construction, it is standard to show that there is a finite speed of propagation:

\begin{prop}[{\cite[Lemma 2.5]{BDT19}}] \label{eventF}
Let $(\s_t)_{t\ge0}$ be a FA-1f process. For $t \ge 0$ and two sites $x < y$, we define the events:\\
$F(x,y,t) = \{ \text{there is a sequence of successive rings linking x and y in a time interval of length t} \},$
$\tilde{F}(x,y,t) = \{\exists z \in [x,y] \ \textup{s.t.} \ F(x,z,t)\cap F(y,z,t)\}.$\\
Then there exists a constant $\bar{v} > 0$ such that if $|x-y| \ge \bar{v}t$, then
$$\P(F(x,y,t)) \le \P(\tilde{F}(x,y,t)) \le e^{-|x-y|}.$$
\end{prop}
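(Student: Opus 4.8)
The first inequality is essentially a triviality: taking $z=y$ in the definition of $\tilde F(x,y,t)$, the empty chain always ``links'' $y$ to itself, so $F(x,y,t)\subseteq\tilde F(x,y,t)$ and hence $\P(F(x,y,t))\le\P(\tilde F(x,y,t))$. All the work is in the second inequality, and the plan is a standard first-moment estimate that turns the geometric fact ``a chain of rings must travel the distance $|x-y|$'' into a large-deviation bound for a sum of exponential holding times.

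Concretely, I would first fix the ``meeting site'' $z\in[x,y]$ and discard one of the two chains: on $F(x,z,t)\cap F(y,z,t)$ there is in particular a chain of successive rings joining $z$ to whichever of $x,y$ is farther from it, and since one ring moves the active site by at most one, this chain consists of at least $\lceil|x-y|/2\rceil$ rings, all within a time window of length $t$. The key combinatorial input is then that, for a fixed self-avoiding nearest-neighbour path $\gamma=(z_0,\dots,z_n)$, the probability that there exist ring times along $\gamma$, one per site, monotone in time and all contained in the window, is at most $t^n/n!$: the sites of $\gamma$ are distinct, so the corresponding rate-$1$ Poisson processes are independent, and the expected number of such monotone $n$-tuples equals $\int_{[0,t]^n}\mathbf{1}(u_1\le\dots\le u_n)\,du=t^n/n!$. (A chain that revisits a site can always be shortened to a self-avoiding one while keeping monotonicity, so restricting to self-avoiding $\gamma$ loses nothing.) Summing over the at most $3^n$ paths of length $n$ and over $n\ge|x-y|/2$ yields
$$\P\big(F(x,z,t)\cap F(y,z,t)\big)\ \le\ \sum_{n\ge|x-y|/2}\frac{(3t)^n}{n!}\ \le\ 2\Big(\frac{6et}{|x-y|}\Big)^{|x-y|/2}$$
(the last step by $n!\ge(n/e)^n$ and summing the resulting geometric-type tail); choosing $\bar v$ large enough that $|x-y|\ge\bar v t$ forces $6et/|x-y|\le e^{-5}$, this is $\le e^{-2|x-y|}$. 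Finally I would sum over the $\le|x-y|+1$ possible values of $z$: since $|x-z|+|y-z|=|x-y|$ the relevant distances run over $[|x-y|/2,|x-y|]$ and the bound is summable, so after absorbing the polynomial prefactor into the exponent (by taking $\bar v$ a little larger still) one gets $\P(\tilde F(x,y,t))\le e^{-|x-y|}$.

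Two points deserve care, the second being the real obstacle. The minor one is the precise meaning of ``successive rings linking $x$ and $y$'': it must be set up, using the nearest-neighbour structure of the constraint $c_x(\s)=1-\s(x-1)\s(x+1)$, so that in the graphical construction influence genuinely propagates only along such monotone nearest-neighbour chains; once the definition is fixed the union bound above applies verbatim. The real obstacle is bookkeeping the constants so that the clean bound $e^{-|x-y|}$ (not merely $e^{-c|x-y|}$) comes out for \emph{all} pairs with $|x-y|\ge\bar v t$, including the degenerate regime of small $|x-y|$ — equivalently very small $t$ — where the series estimate is too weak and one instead checks the bound by hand, e.g.\ $\P(\tilde F(x,x+1,t))\le 1-e^{-2t}\le 2t\le 2/\bar v<e^{-1}$.
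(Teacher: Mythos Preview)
The paper does not give its own proof of this proposition: it is quoted verbatim from \cite[Lemma~2.5]{BDT19} and used as a black box, so there is no in-paper argument to compare against. Your sketch is the standard finite-speed-of-propagation estimate (union bound over nearest-neighbour paths, $t^n/n!$ for each via independent rate-$1$ clocks, Stirling, then a union over the meeting site $z$), which is exactly the kind of argument behind such statements and is essentially correct.

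Two minor comments on your write-up. First, on $\Z$ the self-avoiding reduction is sharper than you use it: once you shortcut repeated sites, the self-avoiding path from $z$ to $x$ is \emph{unique} (the direct one) and has length exactly $|x-z|$, so there is no need for a $3^n$ path count nor for a sum over $n\ge |x-y|/2$; one gets directly $\P(F(x,z,t))\le t^{|x-z|}/|x-z|!$. This does not affect correctness, only tidiness. Second, your treatment of the small-$|x-y|$ endpoint relies on an implicit convention for $F(y,y,t)$ and on how ``successive rings'' is formalised; this is precisely the ``minor point'' you flag, and indeed once a concrete definition is fixed (as in the cited reference) the boundary cases go through by the same mechanism without needing a separate argument.
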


This implies a maximum speed for the front:

\begin{cor}\label{vitessemax}
For all $\s \in LO$ or $\e \in LO_-$, and $c \ge \bar{v}$, $\P(X_t -X_0 < -ct) \le e^{-ct}$.
\end{cor}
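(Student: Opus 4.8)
The plan is to exploit the fact that the initial configuration is entirely occupied strictly to the left of the front $X_0$: the only way the front can travel a distance $ct$ to the left is for the emptiness to be transported leftward through a chain of successive rings, and Proposition~\ref{eventF} makes such a long chain exponentially unlikely.

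First I would reduce the statement to a ring‑chain event. Fix $c\ge\bar v$ and set $k=\lfloor ct\rfloor+1$, so that on $\{X_t-X_0<-ct\}$ the front has visited the site $X_0-k$ at some time of $[0,t]$. Since $\s\in LO$ (resp.\ $\e\in LO_-$), the configuration remains occupied strictly to the left of the front at all times; hence the front can only move leftward by unit steps, and it enters a new site $y<X_0$ exactly when a ring occurs at $y$ with Bernoulli value $0$ while the site immediately to its right is empty. Tracing these rings back, step by step, from $X_0-k$ to the original emptiness at $X_0$ (using the graphical construction and, if convenient, the leftmost site ever emptied during $[0,t]$), one obtains a sequence of successive rings on adjacent sites spanning the segment $[X_0-k,X_0]$ within a time window of length at most $t$; that is, $\{X_t-X_0<-ct\}\subseteq F(X_0-k,X_0,t)\subseteq\tilde F(X_0-k,X_0,t)$.

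Then the conclusion is immediate from Proposition~\ref{eventF}: since $k\ge ct\ge\bar vt$, the hypothesis $|x-y|\ge\bar vt$ holds with $x=X_0-k$ and $y=X_0$, so $\P(X_t-X_0<-ct)\le\P(F(X_0-k,X_0,t))\le e^{-k}\le e^{-ct}$. The case $\e\in LO_-$ is identical, the boundary zero at $0$ being irrelevant because the front moves away from it. I expect the only genuinely delicate point to be the deterministic step of the previous paragraph — arguing rigorously from the graphical construction that a zero appearing far to the left of $X_0$ forces a complete chain of rings back to $X_0$, and bookkeeping the distances so that the resulting exponent is honestly at least $ct$; any harmless off‑by‑one or multiplicative constant can be absorbed by slightly enlarging $\bar v$, which the statement allows since it only asserts the existence of such a constant. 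Everything else is a direct invocation of Proposition~\ref{eventF}.
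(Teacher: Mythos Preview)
Your proposal is correct and is exactly the intended argument: the paper presents the corollary as an immediate consequence of Proposition~\ref{eventF} without spelling out a proof, and the chain-of-rings reasoning you give is precisely how one makes that consequence rigorous. One minor remark: your closing hedge about absorbing constants by ``slightly enlarging $\bar v$'' is unnecessary and slightly off (the constant $\bar v$ is the fixed one from Proposition~\ref{eventF}, not a free parameter of the corollary), but your actual computation $e^{-k}\le e^{-ct}$ with $k=\lfloor ct\rfloor+1$ already delivers the exact bound, so the hedge can simply be dropped.
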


\subsection{Contact process}

In \cite{H74}, Harris introduced the contact process on $\Z$. It is a process on $\{0,1\}^\Z$ that allows sites to flip from $0$ (infected site) to $1$ at rate $1$, and from $1$ to $0$ at a rate depending on the amount of empty adjacent sites. Based on this process, we introduce the same \emph{threshold contact process} as in \cite{BDT19}, denoted here by $(\z_t)_{t\ge 0}$ and given by the following generator:
$$\forall f \ \text{local}, \, \forall \z \in \Omega, \, \mathcal{L'}f(\z) = \sum_{x \in \Z} r'(x,\z)(f(\z^x)-f(\z)),$$
where $r(x,\z) = (1-\z(x-1)\z(x+1))q\z(x) + (1-q)(1-\z(x))$. Note that we took here the convention that $0$ is an infected site, which is not the most common one.\\

This process differs from both FA-1f and Harris' process in its constraint: a site is free to flip from $0$ to $1$ at rate $p$ but has to have at least one empty neighbor to flip from $1$ to $0$ at rate $q$.  This threshold contact process is the one well suited to a coupling with FA-1f as we will explain later. Let $\bar{q}$ be the same parameter than in \cite[Appendix B]{BDT19} such that whenever $q \ge \bar{q}$, then the threshold contact process starting from a single infected site had a positive probability of surviving and will create infected sites in an interval scaling linearly with time.\\

Let $\e_0 \in LO_-$ and $(\e_t)_{t \ge 0}$ be a FA-1f process starting from $\e_0$. Let $\z_0 = \delta_{X_0}$ where $X_0$ is the front of $\e_0$ and $\delta_{X_0}$ is the configuration (on $\Z$) equal to $0$ in site $X_0$ and equal to $1$ everywhere else. Define $(\z_t)_{t\ge0}$ the contact  process starting from configuration $\z_0$, evolving with respect to the standard coupling with $(\e_t)_{t \ge 0}$. Then $\forall t \ge 0, \forall x \in \Z_-, \e_t(x) \le \z_t(x)$. Indeed, this inequality holds for $t = 0$. Then, for each ring of a site $x \in \Z_- \setminus \{0\}$, assuming $\forall y \in \Z_-, \e_{t^-}(y) \le \z_{t^-}(y)$, there are three cases:
\begin{itemize}
\item The kinetic constraint at $x$ is not satisfied in $\e_{t^-}$, in which case it is not satisfied in $\z_{t^-}$ either. In this case, the value $\e_t(x)$ does not change and the only possible change for $\z_t(x)$ is $0 \rightarrow 1$, which preserves the order.
\item The kinetic constraint at $x$ is satisfied in $\e_{t^-}$ but not in $\z_{t^-}$. In this case, a $0 \rightarrow 1$ flip is possible for both configurations, and a $1 \rightarrow 0$ flip is only possible for $\e_t(x)$, which does not break the inequality.
\item The constraint is satisfied for both configurations, and then by construction of the coupling, $\e_{t^-}(x) = \z_{t^-}(x)$, which again does not change the inequality.
\end{itemize}
Note also that the behavior of $\z$ on $\Z_+$ influences the behavior on $\Z_-$ only through the $x=0$ site. Since $\forall t, \e_t(0) = 0$, we clearly have $\e_t(0) \le \z_t(0)$.

Thanks to this observation, and to the known results on the contact process \cite[Appendix B]{BDT19}, we can now state a first important lemma which provides a minimal speed for the front.

\begin{lem} \label{vitessemin}
Let $q>\bar{q}$. There exists $\underline{v} > 0$ and $A,B >0$ such that for every $\e \in LO_-$, and $t \ge 0$,
$$\P(X_t - X_0 > -\underline{v}t) \le A e^{-Bt}$$
\end{lem}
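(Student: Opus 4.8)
The plan is to transfer the known linear‐growth result for the threshold contact process to a lower bound on the displacement of the FA-1f front, using the coupling $\e_t(x)\le\z_t(x)$ established just above the statement. First I would recall precisely what is known from \cite[Appendix B]{BDT19}: for $q>\bar q$, the threshold contact process $(\z_t)$ started from a single infected site at the origin, conditioned to survive (or equivalently its envelope), infects, with probability $1-Ae^{-Bt}$, every site in an interval of radius at least $\underline v t$ around the origin; and the survival probability is itself bounded below by a positive constant. The key point is that an empty (infected) site of $\z_t$ far to the left of $X_0$ forces, via $\e_t(x)\le\z_t(x)$, the site $X_t\le$ that position, i.e.\ the FA-1f front has moved left by at least $\underline v t$.

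The main subtlety is that the contact‐process statement only gives \emph{conditional} linear growth (on survival), whereas we want the unconditional bound $\P(X_t-X_0>-\underline v t)\le Ae^{-Bt}$ for the FA-1f front. The resolution is that the FA-1f process started in $LO_-$ \emph{cannot} die out: the front is always present (the constraint at a site adjacent to the front is always satisfied, so the configuration keeps producing zeros), so one should not couple with a single contact process but rather restart it. Concretely, I would use a renewal/restart argument: at a sequence of stopping times, or simply by partitioning $[0,t]$ into blocks of fixed length $T$, we attempt to "ignite" a fresh threshold contact process seeded at the current front position; each attempt succeeds (survives and grows linearly within its block) with probability bounded below by some $\rho>0$ depending only on $q$ and $T$, independently across blocks by the finite‐speed‐of‐propagation estimate (Proposition \ref{eventF}) which lets us localize the blocks so they use disjoint Poisson/Bernoulli randomness up to an exponentially small error. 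A union bound then shows that the probability that \emph{every} one of the $\sim t/T$ attempts fails is exponentially small in $t$, and a single successful attempt already drags the front left by $\underline v T\cdot(\text{fraction})$; combined with the no‐backtracking estimate of Corollary \ref{vitessemax} on the other blocks (the front never moves right, and moves left by at most $\bar v$ per unit time in a controlled way), one deduces $X_t-X_0\le-\underline v' t$ for a possibly smaller speed $\underline v'>0$, which we rename $\underline v$.

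The steps, in order: (1) state the contact‐process input from \cite{BDT19} — positive survival probability and conditional linear growth of the infected region at speed $\underline v_{\mathrm{CP}}$; (2) fix a block length $T$ large enough that a contact process seeded at a point survives to time $T$ and reaches distance $\ge \underline v_{\mathrm{CP}}T/2$ with probability $\ge\rho>0$; (3) run, for $k=0,1,\dots,\lfloor t/T\rfloor-1$, a contact process $\z^{(k)}$ started from $\delta_{X_{kT}}$ and coupled (standard coupling) to $\e$ on $[kT,(k+1)T]$, noting $\e_{(k+1)T}(x)\le\z^{(k)}_{(k+1)T}(x)$ for the relevant $x$ so that success in block $k$ yields $X_{(k+1)T}\le X_{kT}-\underline v_{\mathrm{CP}}T/2$; (4) use finite speed of propagation (Proposition \ref{eventF}) to argue the block events are essentially independent, so $\P(\text{all blocks fail})\le (1-\rho)^{\lfloor t/T\rfloor}+(\text{err})\le A'e^{-B't}$; (5) on the complement, at least one block succeeds and, since the front is monotone non‐increasing in the sense of Corollary \ref{vitessemax} (it does not drift right), conclude $X_t-X_0\le -\underline v t$ with $\underline v=\underline v_{\mathrm{CP}}/(2T)$ times a constant, adjusting $A,B$ accordingly. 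The main obstacle I anticipate is step (4)–(5): making the restart argument clean requires care that the front position $X_{kT}$ is measurable with respect to the randomness in $[0,kT]$ while the block‐$k$ contact process uses fresh randomness, and that the rare event of "long‐range" influence (violating the disjointness of block randomness) is absorbed into the exponential error via Proposition \ref{eventF}; handling the boundary site $x=0$ of $\Z_-$ throughout (where $\e_t(0)=0\le\z_t(0)$ trivially) is routine but must be kept in mind so the coupling inequality is valid on all of $\Z_-$.
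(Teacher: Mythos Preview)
Your overall strategy---coupling with the threshold contact process and handling its possible extinction by a restart scheme---is precisely how Corollary~4.2 of \cite{BDT19} is proved, and the paper's one-line proof simply defers to that argument. Two details in your steps (4)--(5), however, do not close as written.

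First, a single successful block gives only $X_{(k+1)T}\le X_{kT}-\underline v_{\mathrm{CP}}T/2$, an $O(1)$ gain; to reach $X_t-X_0\le -\underline v' t$ you need a positive \emph{fraction} of the $N\sim t/T$ blocks to succeed, not merely one. The fix is standard: by the Markov property at the times $kT$, each block succeeds with conditional probability at least $\rho$ given the past, so a large-deviation bound for sums of $\{0,1\}$ variables with conditional mean $\ge\rho$ yields $\P(\#\{\text{successes}\}<\rho N/2)\le e^{-cN}$ (finite speed of propagation is not what gives the near-independence here---the Markov property already does). Second, your appeal to Corollary~\ref{vitessemax} for ``the front never moves right'' is a misreading: that corollary bounds how fast the front can move \emph{left}, and in FA-1f the front does move right (site $X_t$ flips to $1$ at rate $1-q$ whenever $\eta_t(X_t{+}1)=0$). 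To control the unsuccessful blocks you need instead $\P(X_{(k+1)T}>X_{kT})\le e^{-cT}$, which follows because the front jumps $+1$ at rate at most $1-q$ and $-1$ at rate exactly $q>1-q$ (recall $\bar q>1/2$), hence is dominated from above by a random walk with strictly negative drift. With these two corrections the block argument closes. A slightly cleaner variant, closer to the usual contact-process restart, is to restart at the successive extinction times rather than on a fixed grid: the index $K$ of the first surviving copy is geometric, the time $\tau_K$ of that restart has exponential tails, and on $[\tau_K,t]$ the surviving $\zeta^{(K)}$ drags the front at speed $\underline v_{\mathrm{CP}}$; combined with the same drift bound on $X_{\tau_K}-X_0$, this avoids all block bookkeeping.
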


The proof of the result is based on the comparison explained above. Hence, it is identical to the proof of Corollary 4.2 in \cite{BDT19}.

\subsection{Zeros Lemmas}

Next, we state a crucial lemma based on the coupling explained above and on results for surviving contact processes. It is the same result as \cite[Corollary 4.3]{BDT19}, only extended to the FA-1f process on $\Z_-$ and $[1,L]$, and it follows from the same coupling argument.

\begin{lem}\label{lem:contact}
Let $q>\bar{q}$. There exist $c_1,c_2 > 0$ such that for any $\e_0 \in \Omega_-$ and $x \le 0$ such that $\e_0(x) = 0$,
$$ \forall t \ge 0, \ \P(\e_t \notin \h(x-\underline{v}t,(x+\underline{v}t) \wedge 0,\el) \le c_1 t \exp(-c_2(t \wedge \el)). $$
For $L>0$, if $\s_0 \in \Omega_L$ and $x \in [0,L+1]$ is such that $\s_0(x)=0$, then similarly,
 $$ \forall t \ge 0, \ \P(\s_t \notin \h((x-\underline{v}t) \vee 0 ,(x+\underline{v}t) \wedge (L+1),\el) \le c_1 t \exp(-c_2(t \wedge \el)). $$
\end{lem}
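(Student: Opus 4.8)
The plan is to derive this result as a direct consequence of the coupling with the threshold contact process established just above, combined with the survival/growth estimates for the contact process from \cite[Appendix B]{BDT19}. First I would reduce to the case of a single empty site: if $\e_0(x) = 0$, then regardless of the rest of $\e_0$, I can use the standard coupling to dominate $\e_t$ from above on the relevant window by a contact process $\z_t$ started from $\z_0 = \delta_x$, exactly as in the half-line argument given before the statement. The point is that the set $\h(x-\underline{v}t, (x+\underline v t)\wedge 0, \el)$ is an event about configurations having no long gap between consecutive zeros in a given interval, so being in its complement means there is some subinterval of length $\el$ that is entirely filled — i.e.\ entirely value $1$ in $\e_t$, hence entirely value $1$ in the dominating $\z_t$. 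So it suffices to bound the probability that the contact process started from a single infected site at $x$ fails to have infected sites spread throughout $[x - \underline v t, (x+\underline v t)\wedge 0]$ up to gaps of size $\el$.

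The key steps, in order, are: (i) recall from \cite[Appendix B]{BDT19} that for $q > \bar q$ the threshold contact process from a single seed survives with positive probability and, on the survival event, its infected region grows linearly at speed at least $\underline v$ with exponentially small error probabilities — more precisely, that it fills the space-time cone up to bounded gaps with probability $1 - c_1 t \exp(-c_2 (t\wedge \el))$; (ii) translate "fails to fill up to gaps of size $\el$" into the event $\z_t \notin \h(\cdots,\el)$ via the definition of $\h$; (iii) apply the domination $\e_t \le \z_t$ on $\Z_-$ to conclude the first inequality. For the finite-volume statement on $[1,L]$, I would run the same argument but now the contact process is confined by the boundary zeros at $0$ and $L+1$; the coupling inequality $\s_t \le \z_t$ on $[1,L]$ holds by the identical three-case check (with the boundary sites fixed at $0$, which only helps the ordering), and the window gets truncated to $[(x-\underline v t)\vee 0, (x+\underline v t)\wedge(L+1)]$, which is consistent with the contact process having at least as many infected sites as in infinite volume when restricted to this range. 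The form $c_1 t \exp(-c_2(t \wedge \el))$ of the bound is exactly the one produced by the contact-process estimates: the $t\wedge \el$ reflects that one needs time of order $\el$ before a gap of size $\el$ can be considered "unexpected," and the polynomial prefactor $c_1 t$ comes from a union bound over the $O(t)$ many sites in the window.

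The main obstacle — though it is more a matter of care than of genuine difficulty — is to make precise that the contact-process growth estimates from \cite{BDT19}, which are stated for the half-line front setting, apply verbatim to a seed placed at an arbitrary interior empty site $x$ and to the finite interval $[1,L]$. This requires checking that the domination argument sketched before the statement goes through unchanged with $x$ in place of $X_0$ and with two absorbing boundaries rather than one, and that the relevant "Zeros Lemma" from \cite{BDT19} is genuinely a local statement about filling a space-time cone rooted at a single infected site, so that translation invariance and restriction to a sub-window are immediate. Since the coupling is monotone in the right direction here (the contact process only gains infected sites relative to FA-1f, and boundary zeros only help), no new idea beyond \cite{BDT19} is needed, and the proof is essentially a translation and localization of \cite[Corollary 4.3]{BDT19}.
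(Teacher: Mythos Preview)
Your proposal is correct and follows exactly the route the paper indicates: the paper does not give a detailed proof but simply states that Lemma~\ref{lem:contact} ``is the same result as \cite[Corollary 4.3]{BDT19}, only extended to the FA-1f process on $\Z_-$ and $[1,L]$, and it follows from the same coupling argument.'' Your plan --- dominate by the threshold contact process started from $\delta_x$ via the standard coupling, invoke the growth estimates from \cite[Appendix B]{BDT19} to bound $\P(\z_t \notin \h(\cdots,\el))$, and note that the boundary zeros in the half-line and finite-interval cases only help the domination --- is precisely this, spelled out in appropriate detail.
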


This last lemma can be used with $x$ being the front of a configuration $\e \in LO_-$ (or its boundary). This idea leads to a result we will now refer to as the "Zeros Lemma". We give here two versions of this result. The first one matches Lemma 4.4 in \cite{BDT19} while the second one is more specific to our needs later on.

\begin{lem}[Zeros Lemma]\label{zeros1}
Let $q > \bar{q}$. Let $s,\el,M,L > 0$ and $\e \in LO_-$.
\begin{itemize}
\item If $L+M \le 2\underline{v}s$, there exists $c >0$ depending only on $q$ such that:
$$ \P_\e\left(\tilde{\e}_s \notin \h(L,(L+M) \wedge (-X_s),\el)\right) \le (L+M)^2 \exp(-c(L \wedge \el)). $$
\item If $L+M > 2\underline{v}s$ and $\tilde{\e_0} \in \h(0,(L+M) \wedge (-X_0),2\underline{v}s)$, then there exists $c>0$ such that:
$$ \P_\e\left( \tilde{\e}_s \notin \h(L,(L+M) \wedge -X_s,\el)\right) \le \dfrac{s^2}{L}\exp(-c(L \wedge \el)) + Ms \exp(-c(s \wedge \el)).$$
\end{itemize}
\end{lem}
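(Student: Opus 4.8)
The plan is to derive the Zeros Lemma from the basic contact-process estimate, Lemma \ref{lem:contact}, applied at the random location given by the front $X_s$, after first controlling the position of the front by the minimal-speed bound, Corollary \ref{vitessemax} and Lemma \ref{vitessemin}. The key point is that $\h(L,(L+M)\wedge(-X_s),\el)$ for the shifted configuration $\tilde\e_s=\theta_{X_s}\e_s$ is exactly the statement that $\e_s$ itself lies in $\h(X_s+L,(X_s+L+M)\wedge 0,\el)$, so everything reduces to a statement about $\e_s$ on an interval anchored at $X_s$. Since $X_s$ is random, I would split on the event $\{|X_s-X_0|\le \underline v s\}$ (up to constants), whose complement has probability at most $Ae^{-Bs}$ by Lemma \ref{vitessemin} together with Corollary \ref{vitessemax}; on the good event the relevant interval is contained in a deterministic window of length $O(L+M)$ around $X_0$, so a union bound over that window costs only a polynomial factor in $L+M$.

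For the first item, $L+M\le 2\underline v s$: here the front, travelling at speed at least $\underline v$ to the left, has by time $s$ swept past every site in $[X_0+L, X_0+L+M]$ — more precisely, for each such site $y$ there is a time $t_y\le s$ at which $y$ is the front, at which moment $\e_{t_y}(y)=0$. I would then invoke the second (interval) part of Lemma \ref{lem:contact}, or rather its $\Z_-$ analogue, started from the instant $t_y$ with a zero planted at $y$, and run it for the remaining time $s-t_y$; the constraint $L+M\le 2\underline v s$ guarantees $s-t_y$ is comparable to $L$ for sites near the left end, so the error $c_1 (s-t_y)\exp(-c_2((s-t_y)\wedge \el))$ is bounded by $\exp(-c(L\wedge \el))$ up to the polynomial $(L+M)^2$ from the union bound. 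One must also incorporate the event from Lemma \ref{vitessemin} that the front has indeed reached far enough left, which contributes a term of the same order after adjusting $c$.

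For the second item, $L+M> 2\underline v s$: now the front has not swept past the whole window, so one cannot plant zeros everywhere by waiting. Instead one uses the hypothesis $\tilde\e_0\in\h(0,(L+M)\wedge(-X_0),2\underline v s)$, which says that in the initial shifted configuration, within the window, every site of $[0,L+M-2\underline v s]$ has a zero within distance $2\underline v s$ to its right. For each such initial zero at position $y$ (relative to the front), apply Lemma \ref{lem:contact} with starting time $0$ and run for time $s$: with the planted zero at $X_0+y$ the configuration stays in $\h(X_0+y-\underline v s, X_0+y+\underline v s,\el)$ except with probability $c_1 s\exp(-c_2(s\wedge\el))$. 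Intersecting these events over the $O((L+M)/(2\underline v s))$ relevant initial zeros — spaced at least $2\underline v s$ apart so the $\underline v s$-neighbourhoods tile the window — covers $[L,(L+M)\wedge(-X_s)]$; counting zeros near the right end ($M$ of them, contributing $Ms\exp(-c(s\wedge\el))$) separately from those near the left (giving the $\frac{s^2}{L}$ type factor after the union bound and a geometric-type summation of the $\exp(-c(L\wedge\el))$ tail over distances) yields the claimed bound. The front-localisation event again adds a term absorbed into the constants.

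The main obstacle I anticipate is the bookkeeping in the second case: one must carefully track \emph{which} planted zero covers \emph{which} target site so that the distances entering the exponentials in Lemma \ref{lem:contact} really do grow like $L$ for the bulk of the window (producing $\exp(-c(L\wedge\el))$ rather than merely $\exp(-c(\underline v s\wedge \el))$), and simultaneously make sure the number of events in the union bound is only $O(s^2/L)$-ish rather than $O((L+M)^2)$. This requires organising the target sites into blocks by their distance from the nearest useful planted zero and summing a geometric series of exponential tails — routine but delicate, and it is the place where the asymmetry between the $L$-dependence and the $M$-dependence of the final bound genuinely comes from.
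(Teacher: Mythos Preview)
Your approach is essentially correct and matches what the paper does (which simply defers to \cite{BDT19}, the argument being the same up to the boundary at $0$): localise the front via Lemma~\ref{vitessemin} and Corollary~\ref{vitessemax}, then use the front positions (at intermediate times in the first case, supplemented by the assumed initial zeros in the second) as seeds for Lemma~\ref{lem:contact}, and finish with a union bound. The discretisation in time that you implicitly need is made explicit in the proof of Lemma~\ref{zeros2} just below, and your final paragraph correctly identifies where the asymmetric bound in the second item comes from.

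One slip worth flagging: in the first item the target interval in original coordinates is $[X_s+L,\,(X_s+L+M)\wedge 0]$, anchored at $X_s$, not at $X_0$. The front sweeps leftward through $[X_s,X_0]$, and it is the combination of the minimum-speed event $X_0-X_s\ge \underline v s$ with the hypothesis $L+M\le 2\underline v s$ that places the target interval inside the region reached by the seeds (since then $X_s+L+M\le X_s+2\underline v s\le X_0+\underline v s$, the right endpoint of the cone grown from the time-$0$ front). With that correction your ``site $y$ was the front at some time $t_y$'' picture is valid, though in practice it is cleaner --- as in the proof of Lemma~\ref{zeros2} --- to discretise time deterministically and use the front positions $X_{s_i}$ as seeds rather than to invert for $t_y$.
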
 

\begin{proof}
It is identical to the proof in \cite{BDT19}, just taking into account the boundary.
\end{proof}

\begin{lem}[Zeros Lemma II] \label{zeros2}
Let $q > \bar{q}$. Let $s,\el > 1$ and $\e \in LO_-$. Assume $\e \in \h(X_0,y,\el)$ for some $y \in [X_0,0]$, where $X_0$ denotes the leftmost zero of $\e$.
Then, there exists $C >0$ depending only on $q$ such that if $2\underline{v}s \ge \el$, $$ P_{\e}\left(\e_s \notin \h(X_s,y,\el)\right) \le C(1 + |X_0|) \frac{s^2}{\el}\exp({-c (s \wedge \el)}).$$
\end{lem}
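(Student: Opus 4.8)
The plan is to deduce this from the more general Zeros Lemma (Lemma \ref{zeros1}) together with the contact-process control of Lemma \ref{lem:contact}, the key point being that here we want to keep the \emph{right} endpoint $y$ fixed while only the left endpoint $X_s$ moves, whereas Lemma \ref{zeros1} tracks an interval of fixed length $M$ anchored at the (moved) front. So the first step is to recall why $\h(X_s,y,\el)$ can fail at time $s$: either some pair of consecutive zeros inside $[X_s,y]$ has drifted more than $\el$ apart, or a gap has opened near the new front $X_s$ between $X_s$ and the previous leftmost zeros. I would split $[X_0,y]$ into the region that was already good at time $0$ (which it is, by hypothesis $\e\in\h(X_0,y,\el)$) and propagate goodness forward in time.

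Concretely, I would cover the segment $[X_0,y]$ by $O(|X_0-y|/\el) \le O((1+|X_0|)/\el)$ overlapping blocks of length $\sim\el$, each containing a zero of $\e_0$ (such a zero exists in every $\el$-window because $\e_0\in\h(X_0,y,\el)$). For each such zero $x$, apply the $\Z_-$ version of Lemma \ref{lem:contact}: with probability at least $1 - c_1 s\exp(-c_2(s\wedge\el))$ the configuration $\e_s$ lies in $\h((x-\underline v s)\vee X_s,\,(x+\underline v s)\wedge 0,\el)$ — here I use that the front only moves left (Corollary \ref{vitessemax}) so the relevant left boundary is $X_s$, and that $X_s \le X_0 \le x$. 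Since $2\underline v s \ge \el$, consecutive blocks spaced $\sim\el$ apart have these guaranteed-good intervals overlap, so a union bound over the $O((1+|X_0|)/\el)$ blocks shows that with probability $1 - C(1+|X_0|)(s/\el)\,c_1 s\exp(-c_2(s\wedge\el))$ the whole of $[X_0,y]$ — hence $[X_s,y]$, once we also cover $[X_s,X_0]$ using the block centered at the zero $X_0$ itself, whose guaranteed good interval reaches down to $(X_0-\underline v s)\vee X_s = X_s$ since $\underline v s \ge \el/2$ and $|X_s-X_0|\le \bar v s$ must be controlled — is in $\h(X_s,y,\el)$. The factor $s^2/\el$ in the statement comes from multiplying the $O(s/\el)$ blocks by the per-block failure bound $O(s\exp(-c(s\wedge\el)))$; adjusting constants $c,C$ absorbs the $c_1,c_2$ and the $\bar v$-vs-$\underline v$ discrepancy.

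The main obstacle is the junction near the moving front: Lemma \ref{lem:contact} applied at the zero $x=X_0$ only guarantees goodness on $[(X_0-\underline v s)\vee X_s,\ldots]$, and one must be sure the front $X_s$ has not moved further left than $X_0-\underline v s$, i.e. that the "$\vee X_s$" truncation is harmless. This is exactly where the hypothesis $2\underline v s\ge\el$ and the minimal-speed bound (Lemma \ref{vitessemin}, or rather the maximal-speed bound Corollary \ref{vitessemax} controlling how far left $X_s$ can be) interact: on the complementary event $\{X_0-X_s > \underline v s\}$ one pays an extra $Ae^{-Bs}$ which is dominated by the stated error term. I would handle this by conditioning on $\{X_0 - X_s \le \underline v s\}$, bounding the complement by Corollary \ref{vitessemax}, and noting that on the good event the block at $X_0$ already certifies $\h(X_s,X_0,\el)$, so concatenating with the blocks covering $[X_0,y]$ finishes the proof. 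The remaining work — choosing the block overlaps, a careful union bound, and collecting the error terms into the single expression $C(1+|X_0|)(s^2/\el)\exp(-c(s\wedge\el))$ — is routine bookkeeping of the kind already carried out in \cite{BDT19}.
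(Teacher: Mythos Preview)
Your argument has a genuine gap at the junction near the moving front. You claim that the event $\{X_0 - X_s > \underline v s\}$ is exceptional and can be bounded by Corollary~\ref{vitessemax}, but the inequality goes the other way: Lemma~\ref{vitessemin} says that $\P(X_0 - X_s < \underline v s)\le Ae^{-Bs}$, so $X_0 - X_s \ge \underline v s$ is the \emph{typical} event, not the exceptional one. Corollary~\ref{vitessemax} only bounds $\P(X_0 - X_s > cs)$ for $c\ge \bar v$, and since $\underline v < \bar v$ it gives you nothing here. In fact the front moves at asymptotic speed $v$ with $\underline v < v \le \bar v$, so typically $X_s \approx X_0 - vs$, which is strictly to the left of $X_0 - \underline v s$.

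The consequence is that applying Lemma~\ref{lem:contact} only at the time-$0$ zeros $x_0=X_0,x_1,\dots,x_m$ yields good intervals whose union covers at best $[X_0 - \underline v s,\,y]$; the stretch $[X_s,\,X_0 - \underline v s]$, of typical length $(v-\underline v)s$, is left completely uncontrolled. This is not a bookkeeping issue: with only time-$0$ seeds the contact-process comparison simply cannot reach that far left by time $s$. The paper's proof closes this gap by also seeding from the \emph{moving} front at a mesh of intermediate times $0=s_0<s_1<\cdots<s_n$, applying Lemma~\ref{lem:contact} via the Markov property from the zero at $X_{s_i}$ for the remaining time $s-s_i$. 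The spacing $\Delta'$ is chosen (together with the control of the front increments from Corollary~\ref{vitessemax} and Lemma~\ref{vitessemin}) so that the resulting good intervals $[X_{s_i}-\underline v(s-s_i),\,X_{s_i}+\underline v(s-s_i)]$ overlap and jointly cover $[X_s+\ell/2,\,X_0+\underline v s]$. The number $n$ of intermediate times is $O(s/\ell)$, which is where the factor $s^2/\ell$ in the statement actually originates; your accounting of that factor (``$O(s/\ell)$ blocks times $O(s)$ per block'') is correct for the static seeds in $[X_0,y]$ but that part of the argument by itself only produces the factor $(1+|X_0|)s/\ell$.
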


\begin{rem}
From now on and all throughout the proofs of this paper, the letters $c, C, c', C'$ always denote generic positive constants that may differ from line to line. Consequently, any equation such as $$`` \ \P(A_t) \le Ce^{-ct} \ "$$ should be understood as
$$`` \ \exists C,c > 0, \  \P(A_t) \le Ce^{-ct} \ ".$$
The dependency of these constants on the various parameters appearing in this paper should be explicit or clear from the context.
\end{rem}

\begin{proof}
In the same way as the proof of \cite[Lemma 4.4]{BDT19}, we use Lemma \ref{lem:contact} at intermediate times.
Let us fix $s$ and define:
\begin{align*}
\Delta &= \dfrac{\el}{2(\bar{v}-\underline{v})}\wedge s,\\
n &= \left\lceil \dfrac{(s-\Delta)(\bar{v}-\underline{v})}{2\underline{v}\Delta} \right\rceil, \\
\Delta' &= \dfrac{s-\Delta}{n} \ \text{if} \ n > 0, \\
s_i &= i\Delta' \ \textup{for} \ i \in [0,n].
\end{align*}
For $i \in [0,n]$, by Lemma \ref{lem:contact} and Markov property applied at time $s_i$, there exist $c,C > 0$ such that:
\begin{equation}\label{eq:proofzeros1}
\P_\e \left(\e_s \notin \h(X_{s_i} - \underline{v}(s-s_i), 0 \wedge (X_{s_i} + \underline{v}(s-s_i)), \el/2) \right) \le C(s-s_i)e^{-c\left( (s-s_i)\wedge (\el/2) \right)}
\end{equation}
In words, each position of the fronts on the times $s_i$ provides an interval at time $s$ where it is likely to find enough empty sites.
Moreover, we can control the front evolution during the intervals $[s_i,s_{i+1}]$ with Corollary \ref{vitessemax} and Lemma \ref{vitessemin} to find that:
\begin{align}\label{eq:proofzeros2}
&\P_\e \left( 0 \le X_{s_i} - X_{s_{i+1}} \le \bar{v}\Delta' \right) \ge 1 - e^{-c\Delta'}, \\
\label{eq:proofzeros3}
&\P_\e \left( 0 \le X_{s} - X_{s_{n}} \le \bar{v}\Delta \right) \ge 1 - e^{-c\Delta}.
\end{align}
Our choice of $\Delta$ and $\Delta'$ ensures that under these events, the intervals at time $s$ appearing in \eqref{eq:proofzeros1} overlap, that is to say:
$$\bigcup_{i=0}^n [X_{s_i} - \underline{v}(s-s_i), X_{s_i} + \underline{v}(s-s_i)] \supset [X_s + \el/2, X_0 + \underline{v}s]. $$
We now take into account the zeros of the initial configuration to make this interval go all the way to the boundary. Let $x_0 = X_0 < x_1 < \dots < x_m \le y $ be empty sites in $\e_0$ between $X_0$ and $y$. We choose them so that for all $i$, $x_{i+1} - x_{i} \le \el$ and $y - x_m \le \el$, and their cardinality $m$ is minimal. This way we can have $m \le \dfrac{2 |X_0|}{\el}$.\\
By Lemma \ref{lem:contact}, we have for all $i$,
\begin{equation}\label{eq:proofzeros4}
\P_\e \left(\e_s \notin \h(x_{i} - \underline{v}s, 0 \wedge (x_{i} + \underline{v}s), \el/2) \right) \le C s e^{-c (s \wedge (\el/2))}
\end{equation}
Now since $2\underline{v}s \ge \el,$ then $(x_{i+1} - \underline{v}s) - (x_i + \underline{v}s) \le 0$ which guarantees that the intervals $[x_{i} - \underline{v}s, x_{i} + \underline{v}s]$ overlap.\\

We can now conclude from equations \eqref{eq:proofzeros1}, \eqref{eq:proofzeros2}, \eqref{eq:proofzeros3} and \eqref{eq:proofzeros4}:
\begin{align*}
\P_{\e}\left(\tilde{\e}_s \notin \h(0,-X_s + y,\el)\right) &\le C(n+1)se^{-c (s \wedge (\el/2))} + e^{-c\Delta} + ne^{-c\Delta'} + C m s e^{-c (s \wedge (\el/2))}\\
	&\le C (1 + |X_0|) \dfrac{s^2}{\el} e^{-c (s \wedge \el)}.
\end{align*}
In the last line, we bounded $n \le C \dfrac{s}{\el}$ with $C$ depending only on $q$. Also note that we artificially added a factor $s > 1$ to the last three terms of the first line so we have a nicer expression in the end. While non optimal, this result will be precise enough for our needs.
\end{proof}

\section{Relaxation results}\label{sec:4}

We give here several results of relaxation starting out of equilibrium. They build upon the results of \cite{BCM13}. First, let us recall a proposition about the relaxation of the FA-1f process on a finite interval with zero boundary conditions.

\begin{prop}[{\cite[Corollary 3.3]{BDT19}}]\label{relaxationfinie}
Let $q > 1/2$, $L > 0$ and $f$ a bounded function with support contained in $[1,L]$ such that $\mu(f)=0$. If $L \le e^{t^\alpha}$ for some $\alpha < 1/2$, then there exists $c'=c'(\alpha,q) >0$ such that, if $\s_0 \in \h(0,L+1,\sqrt{t})$, 
$$|\E_{\s_0}[f(\s_t)]| \le \frac{1}{c'}||f||_\infty e^{-c'\sqrt{t}}$$
\end{prop}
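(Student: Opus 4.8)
\emph{Proof plan.} Since FA-1f is not attractive, the elementary East-type coupling is unavailable, so the plan is to split $\E_{\s_0}[f(\s_t)]$ according to whether the process has kept a sufficiently dense set of empty sites up to time $t$, and to invoke the out-of-equilibrium relaxation machinery of \cite{BCM13} (extended to the finite interval with zero boundary conditions, cf.\ \cite[Section 3]{BDT19}) on the favourable event. Concretely, I would first use the Markov property at time $t/2$ to write
\[
\E_{\s_0}[f(\s_t)] \;=\; \E_{\s_0}\!\big[\Psi(\s_{t/2})\big],\qquad \Psi(\zeta):=\E_{\zeta}[f(\s_{t/2})],
\]
and then estimate $\Psi(\s_{t/2})$ separately on the events $\{\s_{t/2}\in\h(0,L+1,\el')\}$ and its complement, where $\el':=\lceil C\sqrt t\,\rceil$ for a suitable constant $C$.

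\emph{Step 1: the empty sites stay dense.} Since $\s_0\in\h(0,L+1,\sqrt t)$, I would pick empty sites $x_1<\dots<x_m$ of $\s_0$, one in each window $[1+j\sqrt t,1+(j+1)\sqrt t]$, so $m\le L/\sqrt t$. For $t$ large one has $\underline v\, t/2\ge 2\sqrt t$, hence the ``spread'' intervals $[(x_j-\underline v t/2)\vee 0,(x_j+\underline v t/2)\wedge(L+1)]$ of consecutive $x_j$'s overlap and their union is all of $[0,L+1]$; by Lemma \ref{lem:contact} applied at each $x_j$ (with scale $\sqrt t$), a union bound, and the overlapping-intervals argument from the proof of Lemma \ref{zeros2},
\[
\P_{\s_0}\big(\s_{t/2}\notin\h(0,L+1,\el')\big)\;\le\; m\, c_1\tfrac t2\, e^{-c_2(\frac t2\wedge\sqrt t)}\;\le\; L\,t\, e^{-c\sqrt t}\;\le\; e^{-c'\sqrt t},
\]
the last step using $L\le e^{t^\alpha}$ with $\alpha<1/2$, so that $\log L\le t^\alpha=o(\sqrt t)$ absorbs the polynomial prefactor. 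On the complementary event I would bound $|\Psi(\s_{t/2})|\le\|f\|_\infty$ trivially.

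\emph{Step 2: relaxation from a configuration with dense zeros, and conclusion.} On the good event I would invoke the \cite{BCM13} relaxation estimate: for FA-1f on $[1,L]$ with zero boundary conditions started from a configuration in $\h(0,L+1,\el')$ and for $g$ with $\mu(g)=0$ and support in $[1,L]$,
\[
\big|\E_{\zeta}[g(\s_r)]\big|\;\le\; C\,\|g\|_\infty\, L\, e^{-c(\sqrt r\,\wedge\,\el')}\qquad\text{for } r\ge C(\el')^2
\]
(up to the precise form of the prefactor). Applied with $\zeta=\s_{t/2}$, $g=f$, $r=t/2$, $\el'\asymp\sqrt t$, and using $\mu(f)=0$, this yields $|\Psi(\s_{t/2})|\le C\|f\|_\infty L\, e^{-c\sqrt t}$ on the good event; combining the two cases and absorbing $L\le e^{t^\alpha}$ once more,
\[
|\E_{\s_0}[f(\s_t)]|\;\le\; C\|f\|_\infty L\, e^{-c\sqrt t}+\|f\|_\infty e^{-c'\sqrt t}\;\le\; C''\|f\|_\infty\, e^{-c''\sqrt t},
\]
which gives the claim. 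I expect Step 2 to be the main obstacle: the \cite{BCM13} estimate is the technical heart, resting on the uniform-in-volume lower bound on the spectral gap of FA-1f with zero boundary conditions, on finite speed of propagation (Proposition \ref{eventF}), and on a recursive multiscale scheme, with the lack of attractiveness handled via the domination by the threshold contact process underlying Lemma \ref{lem:contact}; importing that result and re-deriving it in exactly the finite-volume form above is where the real work lies, the remainder being bookkeeping that hinges on $\alpha<1/2$.
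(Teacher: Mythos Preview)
The paper does not prove this proposition: it is quoted verbatim as \cite[Corollary 3.3]{BDT19} and used as a black box. So there is no paper proof to compare against; the relevant question is whether your sketch could recover the cited result.

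There is a genuine gap. Your Step~1 invokes Lemma~\ref{lem:contact}, i.e.\ the threshold contact process coupling, which is only available for $q>\bar q\approx 0.76$. But Proposition~\ref{relaxationfinie} is stated (and used) for all $q>1/2$, and the paper explicitly stresses this right after the statement: ``Note that this holds for $q>1/2$ which covers a larger regime than $q>\bar q$ because it does not rely on the coupling with the threshold contact process.'' So your Step~1, besides being logically redundant (the hypothesis $\s_0\in\h(0,L+1,\sqrt t)$ is already exactly the input needed), narrows the regime and cannot yield the proposition as stated.

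More importantly, your Step~2 is essentially the proposition itself. The estimate you write down---decay $e^{-c(\sqrt r\wedge\el')}$ from a configuration in $\h(0,L+1,\el')$ on a finite interval with zero boundaries---is precisely what \cite[Corollary~3.3]{BDT19} asserts; you are invoking the conclusion under a cosmetic reparametrisation. You acknowledge this in your last sentence (``importing that result and re-deriving it in exactly the finite-volume form above is where the real work lies''), but that means the proposal is not a proof but a restatement. The actual content of \cite{BCM13,BDT19} is a log-Sobolev/spectral-gap based argument combined with finite speed of propagation and an iterative scheme that controls the persistence of zeros \emph{without} the contact-process domination; none of that appears in your plan. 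If you want to reprove the result rather than cite it, that machinery is what you have to reproduce, and it is independent of Lemma~\ref{lem:contact}.
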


In words, this proposition states that if the process starts in $\h(0,L+1,\el)$, then it is mixed after a time $\el^2$. Note that this holds for $q > 1/2$ which covers a larger regime than $q > \bar{q}$ because it does not rely on the coupling with the threshold contact process. Actually, we can combine this proposition with Lemma \ref{lem:contact} to get a better relaxation speed for the supercritical regime and have a mixing after a time $ \sim \frac{\el}{2\underline{v}}$ instead of $\el^2$:

\begin{prop}\label{relaxationfinie2}
Let $q > \bar{q}$, $0< \beta < 1/2$, $L > 0$, $\el >0$ and $f$ a bounded function with support contained in $[1,L]$ such that $\mu(f)=0$. If $L \le e^{\el^\alpha}$ for some $\alpha < \beta$, then there exist $c,C >0$ depending only on $q$ and $\alpha$ such that, if $\s_0 \in \h(0,L+1,\el)$  and $t = \frac{\el}{2\underline{v}} + \el^{\beta}$,
$$|\E_{\s_0}[f(\s_{t})]| \le C ||f||_\infty L e^{-c\el^{\beta/2}}.$$
\end{prop}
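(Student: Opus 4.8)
The plan is to bootstrap Proposition \ref{relaxationfinie} using the Zeros Lemma machinery: the hypothesis $\s_0 \in \h(0,L+1,\el)$ guarantees the configuration already has empty sites spaced at most $\el$ apart, and the supercritical contact process comparison (Lemma \ref{lem:contact}) tells us that after a short time these zeros "spread" so that the configuration lands in $\h(0,L+1,\sqrt{t'})$ for a much smaller scale $\sqrt{t'} \ll \el$; at that point Proposition \ref{relaxationfinie} applies with the reduced scale and gives the stated bound. Concretely, I would pick an intermediate time $s = \frac{\el}{2\underline v} + \tfrac12\el^{\beta}$ (so that $\underline v s \approx \el/2$, which is exactly the amount of spreading needed for balls of radius $\underline v s$ around consecutive zeros of the initial configuration to overlap), and a target scale $\el' = \el^{\beta}$, with the remaining time $t - s = \tfrac12 \el^{\beta} \asymp (\el')$ being enough for Proposition \ref{relaxationfinie} to mix at scale $\el'$ (note $(\el')^2/\,$ is not what we need — rather we use that $t-s \gtrsim \el'$... actually $t-s \asymp \el^\beta = \el'$, and Proposition \ref{relaxationfinie} needs time $\asymp (\el')^{2}$; see the subtlety below).

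The key steps, in order: (1) Cover $[0,L+1]$ by the zeros of $\s_0$: since $\s_0 \in \h(0,L+1,\el)$, there is a set of empty sites $0 = y_0 < y_1 < \dots < y_m = L+1$ (including the boundary zeros) with consecutive gaps at most $\el$, so $m \le (L+1)/\el + 2$... wait, gaps at most $\el$ so $m \le (L+2)/\el \cdot$ — in any case $m \le C L /\el + 2 \le CL$. (2) Apply Lemma \ref{lem:contact} at each $y_i$ with time $s$: with probability at least $1 - c_1 s e^{-c_2(s \wedge \el')}$ we have $\s_s \in \h((y_i - \underline v s)\vee 0, (y_i + \underline v s)\wedge(L+1), \el')$. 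Choosing $s$ so that $2\underline v s \ge \el$ ensures the intervals $[y_i - \underline v s, y_i + \underline v s]$ overlap, hence by a union bound over the $m$ sites, with probability at least $1 - CL\, s\, e^{-c(s\wedge \el')}$ the configuration $\s_s$ lies in $\h(0, L+1, \el')$. (3) On that event, condition on $\s_s$ and apply Proposition \ref{relaxationfinie} to the process restarted at time $s$ for the remaining duration $t - s = \tfrac12\el^{\beta}$: we need $L \le e^{(t-s)^{\alpha'}}$ for some $\alpha' < 1/2$, which follows from $L \le e^{\el^{\alpha}}$ with $\alpha < \beta$ after adjusting constants, and we need the starting configuration in $\h(0,L+1,\sqrt{t-s})$, which holds since $\sqrt{t-s} \asymp \el^{\beta/2} \ge \el' = \el^{\beta}$?? — here one must be careful about which exponent plays the role of the "mixing scale," so I would actually set the intermediate scale to $\el' := \el^{\beta/2}$ and remaining time $t-s \asymp \el^{\beta} = (\el')^2$, so that Proposition \ref{relaxationfinie} gives $|\E_{\s_s}[f(\s_t)]| \le \frac{1}{c'}\|f\|_\infty e^{-c'\el^{\beta/2}}$. (4) Finally split the expectation $\E_{\s_0}[f(\s_t)] = \E_{\s_0}[f(\s_t)\one_{E}] + \E_{\s_0}[f(\s_t)\one_{E^c}]$ where $E$ is the good event from step (2); the first term is bounded via step (3) and the strong Markov property at time $s$, the second by $\|f\|_\infty \P(E^c) \le \|f\|_\infty \cdot CL\, s\, e^{-c\el^{\beta/2}}$. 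Absorbing polynomial-in-$\el$ factors into the exponential (at the cost of shrinking $c$) and the factor $L$ kept explicit, this yields the claimed bound $C\|f\|_\infty L\, e^{-c\el^{\beta/2}}$.

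The main obstacle is the bookkeeping of scales and exponents: one must choose the intermediate time $s$ and the intermediate coarseness $\el'$ so that simultaneously (a) $2\underline v s \ge \el$ so the spread zeros tile the whole interval, (b) the error in Lemma \ref{lem:contact}, namely $e^{-c(s\wedge \el')}$, is at least $e^{-c\el^{\beta/2}}$, which forces $\el' \gtrsim \el^{\beta/2}$ and $s \gtrsim \el^{\beta/2}$ (automatic since $s \ge \el/(2\underline v)$), (c) the remaining time $t - s$ is large enough — of order $(\el')^2$ — for Proposition \ref{relaxationfinie} to mix at scale $\el'$, which with $\el' = \el^{\beta/2}$ needs $t - s \gtrsim \el^{\beta}$, consistent with $t = \frac{\el}{2\underline v} + \el^\beta$, and (d) the logarithmic-size constraint $L \le e^{\el^\alpha}$ with $\alpha < \beta$ translates into $L \le e^{(t-s)^{\alpha'}}$ with $\alpha' < 1/2$, which holds because $(t-s)^{1/2} \asymp \el^{\beta/2} > \el^\alpha \ge \log L$ for $\el$ large. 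Verifying that $\underline v > 0$ (supercriticality) is used only through Lemma \ref{lem:contact}, and this is why the hypothesis $q > \bar q$ rather than $q > 1/2$ is needed here. Everything else is a union bound plus one application of the strong Markov property.
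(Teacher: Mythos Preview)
Your proposal is correct and follows essentially the same route as the paper: run to an intermediate time $t_1=\el/(2\underline v)$, use Lemma~\ref{lem:contact} at each of the $O(L/\el)$ initial zeros to land in $\h(0,L+1,\el^{\beta/2})$ with error $CLe^{-c\el^{\beta/2}}$, then apply Proposition~\ref{relaxationfinie} on the remaining time $\el^{\beta}$ (so that $\sqrt{t-t_1}=\el^{\beta/2}$ matches the new scale), and combine via the Markov property and a splitting of the expectation. Your eventual choice $\el'=\el^{\beta/2}$, $t-s\asymp\el^{\beta}$ is exactly the paper's; the only cosmetic difference is that the paper takes $t_1=\el/(2\underline v)$ rather than your $s=\el/(2\underline v)+\tfrac12\el^{\beta}$, which is immaterial.
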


\begin{proof}
Set $t_1 = \frac{\el}{2\underline{v}}$.
Let $0 = x_1 < \dots < x_p = L+1$ a sequence of sites in $[0,L+1]$ such that:
\begin{itemize}
\item $\forall 1 \le i \le p, \ \s_0(x_i) = 0$,
\item $\forall 1 \le i \le p-1, \ x_{i+1} - x_{i} \le \el$,
\item $p \le \lceil \frac{2L}{\el} \rceil$.
\end{itemize}
Thanks to Lemma \ref{lem:contact}, we have
\begin{align*}
\P\left(\s_{t_1} \notin \h(0,L+1,\el^{\beta/2})\right) &\le \P\left( \bigcup_{i=1}^p \{ \s_{t_1} \notin \h((x_i - \underline{v}t_1)\vee 0, (x_i + \underline{v}t_1) \wedge (L+1) , \el^{\beta/2} / 2) \} \right)\\
 	&\le \sum_{i=1}^p \P\left(\s_{t_1} \notin \h(x_i - \underline{v}t_1, x_i + \underline{v}t_1, \el^{\beta/2} / 2) \right)\\
 	&\le Cpt_1 e^{-c (t_1 \wedge (\el^{\beta/2} / 2))}\\
 	&\le C'L e^{-c' \el^{\beta/2}}. 
\end{align*}
Now we can use Proposition \ref{relaxationfinie} (with $\s_{t_1} \in \h(0,L+1,\sqrt{\el^\beta})$, and $L < e^{\el^\beta}$ with $\beta < 1/2$) and Markov property:
\begin{align*}
\left| \E_{\s_0}[f(\s_{t})] \right| &\le \left| \E_{\s_0}[f(\s_t)\mathds{1}_{\s_{t_1} \in \h(0,L+1,\el^{\beta/2})}] \right| + ||f||_\infty \P\left(\s_{t_1} \notin \h(0,L,\el^{\beta/2})\right) \\ 
	&\le \E_{\s_0}[\mathds{1}_{\s_{t_1} \in \h(0,L+1,\el^{\beta/2})} |\E_{\s_{t_1}}[f(\s_{t-t_1})] |] + C'||f||_\infty L e^{-c' \el^{\beta/2}} \\
	&\le \frac{1}{c''} ||f||_\infty e^{-c'' \el^{\beta/2}} + C'||f||_\infty L e^{-c' \el^{\beta/2}}.
\end{align*}
\end{proof}

A notable result comes from the particular case $\el = L+1$ and $\beta = 1/4$. Indeed, any configuration $\s_0 \in \Omega_L$ belongs to $\h(0,L+1,L+1)$.

\begin{cor}
There exist $c,C >0$, such that for any $L>0$, if $t = \frac{L+1}{2\underline{v}} + (L+1)^{1/4}$,
$$\sup_{\s_0 \in LO_{[0,L]}} || \mu_t^{\s_0} - \mu ||_{TV} \le CLe^{-cL^{1/8}}.$$
\end{cor}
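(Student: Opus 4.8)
The plan is to specialize Proposition~\ref{relaxationfinie2} to the case $\el = L+1$ and $\beta = 1/4$, exactly as suggested by the remark preceding the corollary. First I would observe that \emph{every} configuration $\s_0 \in \{0,1\}^{[0,L+1]}$ with $\s_0(0) = \s_0(L+1) = 0$ automatically lies in $\h(0,L+1,L+1)$: the defining condition of $\h(a,b,\el)$ requires, for every $x \in [a, b-\el+1]$, an empty site in $[x, x+\el-1]$; with $a=0$, $b=L+1$, $\el = L+1$, the range $[a,b-\el+1]$ reduces to the single point $\{0\}$, and the window $[0, L]$ contains the empty site $0$ itself. Hence the hypothesis $\s_0 \in \h(0,L+1,\el)$ of Proposition~\ref{relaxationfinie2} is vacuously satisfied for all admissible $\s_0$, in particular for all $\s_0 \in LO_{[0,L]}$.

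Next I would check the size condition $L \le e^{\el^\alpha}$ for some $\alpha < \beta = 1/4$: with $\el = L+1$ this reads $L \le e^{(L+1)^\alpha}$, which holds for all $L > 0$ once, say, $\alpha = 1/8$, since $(L+1)^{1/8}$ eventually dominates $\log L$ and one absorbs the small cases into the constant. With these hypotheses verified, Proposition~\ref{relaxationfinie2} gives, for $t = \frac{\el}{2\underline{v}} + \el^{\beta} = \frac{L+1}{2\underline{v}} + (L+1)^{1/4}$ and any bounded $f$ supported in $[1,L]$ with $\mu(f) = 0$,
$$|\E_{\s_0}[f(\s_t)]| \le C \|f\|_\infty L e^{-c(L+1)^{1/8}}.$$
Then I would pass from this bound on centered test functions to the total variation distance: for any event $A$ depending only on the configuration on $[1,L]$, take $f = \mathds{1}_A - \mu(A)$, which satisfies $\|f\|_\infty \le 1$ and $\mu(f) = 0$, so $|\mu_t^{\s_0}(A) - \mu(A)| \le C L e^{-cL^{1/8}}$; taking the supremum over such $A$ yields $\|\mu_t^{\s_0} - \mu\|_{TV} \le C L e^{-cL^{1/8}}$ (the finitely many discarded small values of $L$ only change $C$), and finally the supremum over $\s_0 \in LO_{[0,L]}$ since the bound is uniform in the initial configuration.

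There is no real obstacle here — the corollary is a direct instantiation. The only point requiring a line of care is the passage from the functional estimate to the $TV$ bound, and the cosmetic replacement of $(L+1)^{1/8}$ by $L^{1/8}$ in the exponent, which costs only an adjustment of the constants $c, C$ (valid since $(L+1)^{1/8} \ge L^{1/8}$, so the inequality is preserved outright). One should also note that $f$ being an indicator on $[1,L]$ means its support is contained in $[1,L]$ as required, and that $\mu$ here denotes $\mu^{1-q}_{[1,L]}$ per the standing convention, matched with the reversible measure of the process on $[1,L]$ with zero boundary conditions appearing in Proposition~\ref{relaxationfinie2}.
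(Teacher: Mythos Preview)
Your proposal is correct and follows exactly the approach the paper indicates: the corollary is stated immediately after the remark that it is the particular case $\el = L+1$, $\beta = 1/4$ of Proposition~\ref{relaxationfinie2}, and the paper gives no further proof. You have simply filled in the routine verifications (the $\h(0,L+1,L+1)$ condition, the size condition, and the passage from centered test functions to total variation), all of which are handled correctly.
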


\begin{rem}
This results proves that the mixing time is always bounded by $\frac{L}{2\underline{v}} + o(L)$. This gives a first scale for $t_{\text{mix}}$ but the constant $\frac{1}{2 \underline{v}}$ will turn out to be too large in general, as we will see in the final section.
\end{rem}


\section{Central limit theorem for FA-1f on $\Z_-$}\label{sec:5}

We now aim to prove a central limit theorem for the front of the FA-1f process on $\Z_-$. Our result differs from the one proved in \cite[Theorem 2.2]{BDT19} in two ways. First, it studies an FA-1f process on the half-line as opposed to $\Z$. This change forces us to take into account the boundary condition in the origin, tough we will see that the front gets far from the origin so this barely makes a difference in the proof. The second change is a \emph{uniformity} result with respect to the initial configuration. This requires to study carefully the original proof and tweak a few key arguments. For a first read, one can take Theorem \ref{TCL} for granted and skip to the next section in order to get to the main result.

\begin{thm}\label{TCL}
Let $q>\bar{q}$. There exist $v > 0$ and $s \ge 0$ such that for all $\e \in LO_-$, and all real numbers $a < b$, 
\begin{equation}\label{Eq:TCLuniforme}\underset{\e \in LO_-}{\sup} \left| \P_\eta\left(a \le \dfrac{X_t - vt}{\sqrt{t}} \le b\right) - \P(a \le N \le b) \right|\underset{t \rightarrow \infty}{\longrightarrow} 0,
\end{equation}
with $N \sim \mathcal{N}(0,s^2)$. 
The constants $v$ and $s$ are the same as in \cite[Theorem 2.2]{BDT19}.
\end{thm}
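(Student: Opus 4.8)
## Proof Strategy for Theorem \ref{TCL}

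\textbf{Overall approach.} The plan is to adapt the renewal-and-regeneration argument of \cite[Theorem 2.2]{BDT19} to the half-line setting, upgrading it to a uniform CLT. The original proof establishes the CLT on $\Z$ by finding a sequence of renewal times $0 = \tau_0 < \tau_1 < \tau_2 < \dots$ at which the configuration seen from the front $\tilde{\sigma}_{\tau_k}$ returns to a fixed reference distribution (or a distribution in a good class), so that the increments $(X_{\tau_{k+1}} - X_{\tau_k}, \tau_{k+1} - \tau_k)$ form an i.i.d.\ sequence after the first. A standard CLT for renewal-reward processes then gives the Gaussian limit with $v = \E[\text{displacement}]/\E[\text{time}]$ and an explicit $s^2$. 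My task is twofold: (i) handle the origin boundary in $\Omega_-$, and (ii) make the convergence uniform over all starting configurations $\e \in LO_-$.

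\textbf{Step 1: The boundary is irrelevant.} First I would show that with overwhelming probability the front travels away from the origin at linear speed, so the boundary condition at $0$ is never felt. By Lemma \ref{vitessemin}, $\P_\e(X_t - X_0 > -\underline{v}t) \le Ae^{-Bt}$ uniformly in $\e \in LO_-$; hence for $t$ large the front is at position $\le X_0 - \underline{v}t \le -\underline{v}t$, i.e.\ at distance $\gtrsim \underline{v}t$ from the origin. Combined with the finite-speed-of-propagation estimate (Proposition \ref{eventF} / Corollary \ref{vitessemax}), the evolution of the process in a window of size $o(t)$ around the front is, up to an event of probability $\le Ce^{-ct}$, indistinguishable from the same process run on all of $\Z$. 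This lets me transfer essentially verbatim the regeneration-time construction of \cite{BDT19}, at the cost of error terms that are exponentially small in $t$ and uniform in $\e$.

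\textbf{Step 2: Uniform coupling to a stationary-class start.} The key new ingredient is to remove the dependence on the initial configuration. The idea is to show that after a burn-in time $t_0 = t_0(t) = o(\sqrt{t})$ — say $t_0 = (\log t)^2$ or a small power of $t$ — the law $\tilde{\mu}^\e_{t_0}$ of the configuration seen from the front is, uniformly in $\e \in LO_-$, within total variation $o(1)$ of a fixed "good" distribution for which the \cite{BDT19} analysis applies (or, more precisely, lies in a class of distributions all satisfying a uniform version of the regeneration estimates). This is where the "Zeros Lemma" (Lemma \ref{zeros1}, Lemma \ref{zeros2}) does the work: starting from any $\e \in LO_-$, after time $t_0$ the configuration seen from the front lies in $\h(0, \ell, \ell)$ for a suitable $\ell$ growing with $t_0$, with probability $1 - o(1)$ uniformly in $\e$; on this event one can run a short relaxation/coupling (as in Section \ref{sec:4}) so that the local environment near the front has a law that no longer depends on $\e$ up to $o(1)$ error. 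Then I apply the (boundary-free, $\Z$) CLT of \cite{BDT19} — which I must first check holds uniformly over this good class of starting laws, re-examining the moment bounds on $\tau_1$ and on the displacement in their proof to confirm they depend only on $q$ and the good class, not on the specific start.

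\textbf{Step 3: Assembling the uniform CLT and identifying the constants.} Writing $X_t = X_{t_0} + (X_t - X_{t_0})$, the first term is $O_\P(t_0) = o(\sqrt{t})$ uniformly, and the second, conditionally on $\tilde{\sigma}_{t_0}$ being in the good class, satisfies the CLT with the same $v$ and $s^2$ (centering and scaling by $\sqrt{t-t_0} = \sqrt{t}(1+o(1))$ changes nothing at this order). A small argument via the Berry–Esseen-type control already implicit in \cite{BDT19}, or simply the fact that a family of distributions each converging to the same continuous limit converges uniformly when the convergence rates are uniform, upgrades this to the stated $\sup_{\e \in LO_-}$ bound. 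The constants $v$ and $s$ are by construction the renewal-reward constants of the $\Z$ process, i.e.\ exactly those of \cite[Theorem 2.2]{BDT19}.

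\textbf{Main obstacle.} The delicate point is Step 2: verifying that the regeneration/moment estimates in \cite{BDT19} can be made uniform over a class of initial laws rather than holding for one fixed nice initial condition. Their construction of renewal times uses that the front, with good probability, "resets" — and the probability and moment bounds for the first reset must be shown to be controlled uniformly once the environment near the front is in $\h(\cdot,\cdot,\ell)$. This requires going back into the proof of \cite[Theorem 2.2]{BDT19} and tracking exactly which quantities the bounds depend on; I expect this to be the bulk of the technical work, while Steps 1 and 3 are comparatively routine given the finite-speed and Zeros Lemma machinery already set up.
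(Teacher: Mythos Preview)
Your outline rests on a mischaracterization of the method in \cite[Theorem~2.2]{BDT19}. That proof is \emph{not} a renewal/regeneration argument with i.i.d.\ increments between reset times; it is a Bolthausen-type CLT for the weakly dependent increments $\xi_n = X_n - X_{n-1}$, where the decay of correlations is supplied by the rate of convergence of the law seen from the front to its invariant measure $\nu$. Consequently your Steps~2--3 and your ``Main obstacle'' are aimed at the wrong target: there is no renewal time $\tau_1$ whose moments need to be controlled, and for the non-attractive, non-monotone FA-1f dynamics it is not clear one can construct regeneration times at all --- this would be a substantial new result, not a routine adaptation.

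The paper proceeds quite differently. It first strengthens the convergence-to-$\nu$ result to the half-line setting (Theorem~\ref{ergo}, proved in the Appendix by the same coupling as in \cite{BDT19} with minor boundary adjustments). It then states an abstract uniform CLT, Theorem~\ref{TCLgen}, which upgrades \cite[Theorem~A.1]{BDT19} by adding uniformity in the initial configuration; the proof is via a uniform version of Bolthausen's lemma (Lemma~\ref{Bol}) plus a truncation argument to pass from bounded to unbounded variables. Finally, four short lemmas verify the hypotheses of Theorem~\ref{TCLgen} for the increments $\xi_n$, using the rate from Theorem~\ref{ergo}. Your Step~1 intuition --- that the boundary at $0$ is harmless because the front drifts away --- is correct and is exactly the content of Lemma~\ref{LemTCL} and of the extra event $\{X_t \le -\underline{v}t\}$ inserted into the coupling; but that is the only point of contact between your plan and the actual proof.
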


\begin{rem}
In this theorem, the variance $s^2$ of the normal law can be zero. For this more favorable case, the law $\mathcal{N}(0,0)$ is simply $\delta_0$, so that we have $\underset{\e \in LO_-}{\sup} \left| \P_\eta\left(-a \le \dfrac{X_t - vt}{\sqrt{t}} \le a\right) - 1 \right|\underset{t \rightarrow \infty}{\longrightarrow} 0$ for all $a >0$.
\end{rem}

To prove this theorem, we first need to study the convergence of the law behind the front, which is the topic of the following theorem. 

\begin{thm}\label{ergo}
Let $q>\bar{q}$. The process seen from the front has a unique invariant measure $\nu$. This measure is the same as in \cite[Theorem 2.1]{BDT19}. There exist $d^*, c >0$ such that for all $\e_0 \in LO_-$, for $t$ large enough, 
$$ ||\tilde{\mu}^{\e_0}_t - \nu ||_{[0,d^*t]} \le \exp\left(-c e^{(\log t)^{1/4}}\right), $$
where $\tilde{\mu}^{\e_0}_t$ is the distribution of the configuration seen from the front at time $t$, starting from $\e_0$ and $||\cdot-\cdot||_\Lambda$ is the total variation distance on $\Lambda$.
\end{thm}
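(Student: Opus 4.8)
The plan is to adapt the coupling/regeneration strategy of \cite[Theorem 2.1]{BDT19} that establishes ergodicity of the environment seen from the front, but to quantify it and — crucially — to make the rate uniform over the starting configuration $\e_0 \in LO_-$. The identification of the invariant measure $\nu$ is free: it is the same measure as in \cite{BDT19}, so the only new content is the quantitative, uniform convergence bound on the box $[0, d^* t]$. The natural route is: (1) show that from any $\e_0 \in LO_-$ the front has moved a macroscopic distance $\ge d' t$ (for a suitable $d' > d^*$) by time $t$, except with probability at most $\exp(-c e^{(\log t)^{1/4}})$ — this controls, via the finite-speed-of-propagation bound (Proposition \ref{eventF}), how much of the configuration to the left of the front at time $t$ can "feel" the boundary at $0$ or the far-away tail of $\e_0$; and (2) couple two copies of the process-seen-from-the-front, one started from $\e_0$ and one started from (a configuration distributed according to) $\nu$, so that they agree on $[0, d^* t]$ with the stated probability.

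For step (1), I would run the Zeros Lemma machinery. By Lemma \ref{vitessemin} the front genuinely moves left at speed $\ge \underline{v}$ with exponentially good probability; combined with Corollary \ref{vitessemax} for the upper speed $\bar v$, the front at time $t$ sits in $[X_0 - \bar v t, X_0 - \underline{v} t]$ with probability $1 - e^{-ct}$. Then Lemma \ref{lem:contact} / Lemma \ref{zeros1} guarantees that behind the front, on a window of length linear in $t$, the configuration lies in $\h(\cdot, \cdot, \el)$ for $\el$ of polylog size; this is exactly the input Proposition \ref{relaxationfinie} needs. So the strategy is a bootstrapping one, as in Proposition \ref{relaxationfinie2}: first wait a time $t_0 \approx t/\log t$ (or some sub-linear fraction) to force, via the contact-process coupling, a configuration that is "full of zeros" at scale $\el = (\log t)^{\text{const}}$ on a macroscopic window behind the front, then apply the finite-volume relaxation estimate of Proposition \ref{relaxationfinie} on that window to get the configuration close to the restriction of $\mu^{1-q}$ — and then recognize the local limit seen from the front as $\nu$. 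The doubly-exponential-in-$(\log t)^{1/4}$ rate comes precisely from the regime $L \le e^{\el^\alpha}$ in Proposition \ref{relaxationfinie} with $\el \approx (\log t)^{4}$: $e^{-c\sqrt{\el}} = e^{-c(\log t)^2}$-type bounds, iterated/renormalized, produce the $\exp(-c e^{(\log t)^{1/4}})$ form.

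For step (2) — the coupling that upgrades "close to a product measure on a window" to "close to $\nu$, uniformly in $\e_0$" — I would use the standard coupling of two FA-1f processes on $\Z_-$, one from $\e_0$ and one from a stationary-for-the-front initial condition, and argue that once both have many zeros at scale $\el$ behind their (possibly distinct) fronts, a localized surgery allows the two fronts to be coupled to coincide with high probability, after which the standard coupling keeps the two configurations equal on the overlap region; finite speed of propagation (Proposition \ref{eventF}) then confines any later discrepancy to a thin strip near the boundary $0$ or near the far tail, both at distance $\gg d^* t$ from the front. The uniformity in $\e_0$ is automatic from this scheme because all the probabilistic inputs (Lemma \ref{vitessemin}, Lemma \ref{lem:contact}, Proposition \ref{eventF}) are uniform over $LO_-$: nothing depends on $\e_0$ beyond its being in $LO_-$.

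The main obstacle I expect is the uniformity itself in the coupling step: in \cite{BDT19} the ergodicity proof may use a regeneration structure whose "warm-up" time depends on $\e_0$, and here one must instead show a single quantitative rate valid for all $\e_0 \in LO_-$ simultaneously. Concretely, the delicate point is to prevent a pathological $\e_0$ (say, with an extremely long run of $1$'s to the right of the front, or with zeros arranged adversarially far away) from slowing the "filling-in behind the front" at scale $\el$; this is where Lemma \ref{lem:contact} applied at the front is essential — the contact process seeded at a single zero (the front) floods a linearly-growing interval regardless of what $\e_0$ looks like, so the scale-$\el$ zero-density behind the front is created purely by the dynamics and is insensitive to $\e_0$. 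Making the two-copy coupling respect this, while tracking that the good window has length $\ge d^* t$ and the error never exceeds $\exp(-c e^{(\log t)^{1/4}})$, is the technical heart of the argument.
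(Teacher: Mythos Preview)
Your broad outline---couple a copy started from $\e_0$ with a copy started from (a draw of) $\nu$, using the Zeros Lemma to produce many zeros behind both fronts and then a relaxation estimate to match them---is the same skeleton the paper uses. But the heart of the argument is precisely your step~(2), and what you write there (``a localized surgery allows the two fronts to be coupled to coincide with high probability, after which the standard coupling keeps the two configurations equal'') is not a mechanism; it is a restatement of the goal. The genuine difficulty is that near the front the law is \emph{not} close to the product measure $\mu$, so Proposition~\ref{relaxationfinie} alone cannot match the two configurations there. The paper's device is an \emph{iterated} coupling: split $[\,(1-\varepsilon)t,\,t\,]$ into $N\asymp t/\Delta$ sub-intervals of length $\Delta=\Delta_1+\Delta_2$ with $\Delta_1=\exp((\log t)^{1/4})$ and $\Delta_2=(\log t)^{3/4}$; in each sub-interval first run for time $\Delta_1$ and use a $\Lambda_n$-maximal coupling on a window \emph{far} from the front (where Proposition~\ref{relaxation2} does apply), then for time $\Delta_2$ introduce an auxiliary Bernoulli variable $\beta$ that, on $\{\beta=1\}$ (probability $e^{-2\Delta_2}$), freezes the clocks at the front site and at a nearby zero so that the near-front region can be coupled via a finite-box relaxation with fixed boundary. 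Each attempt succeeds with probability at least of order $e^{-2\Delta_2}$, and one has of order $t/\Delta_1$ independent attempts; solving the resulting recursion gives the bound $\exp(-c\,\Delta_1)=\exp(-c\,e^{(\log t)^{1/4}})$. Without this retry structure and the freezing trick, I do not see how your ``surgery'' would ever reach the region $[0,3\bar v\Delta_1]$ adjacent to the front.

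Relatedly, your account of where the rate $\exp(-c\,e^{(\log t)^{1/4}})$ comes from is off. It is not obtained by taking $\el\approx(\log t)^4$ in Proposition~\ref{relaxationfinie} and iterating $e^{-c\sqrt{\el}}=e^{-c(\log t)^2}$; it comes from the choice $\Delta_1=e^{(\log t)^{1/4}}$ as the sub-step length, which is exactly the scale at which the per-step error $e^{-c\sqrt{\Delta_1}}$ and the geometric factor $(1-\tfrac12 e^{-2\Delta_2})^{N}$ balance out to $O(e^{-c\Delta_1})$. Also note that the paper does \emph{not} compare $\tilde\mu^{\e_0}_t$ to $\nu$ directly (your remark ``recognize the local limit seen from the front as $\nu$'' would require knowing $\nu$ explicitly, and $\nu\ne\mu$): it couples the $\Z_-$ process from $\e_0$ with a $\Z$ process from an arbitrary $\s_0\in LO$, obtains~\eqref{coupling} uniformly in $(\e_0,\s_0)$, and deduces existence, uniqueness and the rate for $\nu$ from that.
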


The proof of this result is almost identical to the original proof in \cite{BDT19}. It relies on a rather technical coupling argument that only requires tiny adjusments due to  the presence of a boundary in $\Z_-$. The adapted version of the proof can be found in Appendix \ref{sec:app}.

\subsection{A Central Limit Theorem}

From Theorem \ref{ergo}, we will now get a central limit theorem as in \cite{BDT19}. Here, we state a general central limit theorem that we will apply to the increments of th front in the next paragraph. It resembles \cite[Theorem A.1]{BDT19} but adds a uniformity result.

\begin{thm}\label{TCLgen}
Let $(\s_t)$ be a Markov process (in continuous time) on a probability space $(\Omega,\mathcal{F},\P)$ and $(\mathcal{F}_t)$ the adapted filtration. Let $(X_i)_{i\ge 1}$ be real random variables satisfying the following hypotheses:
\begin{enumerate}
\item \begin{enumerate}
	\item $\underset{\s \in \Omega}{\sup} \ \underset{n\in \N}{\sup} \ \E_\s[X_n^2] < \infty;$
	\item for every $i \ge 1$, $X_i$ is measurable w.r.t. $\mathcal{F}_i$;
	\item for every $k,n\ge 1$, $f : \R^n \rightarrow \R$ measurable such that $\underset{\s}{\sup} \ \E_\s[f(X_1,...,X_n)] < \infty$, for all initial $\s$, we have the Markov property
	\begin{equation}\E_\s[f(X_k,...,X_{k+n-1}) \ | \ \mathcal{F}_{k-1}] = \E_{\s_{k-1}}[f(X_1,...,X_{n-1})];
	\end{equation}
	\end{enumerate}
\item There exists a decreasing function $\Phi$, constants $C,c^* \ge 1$ and $v \in \R$ and a measure $\nu$ such that
		\begin{enumerate}
			\item $\underset{n\rightarrow\infty}{\lim} e^{(\log n)^2} \Phi(n) = 0$;
			\item for every $i \ge 1$, $\E_\nu[X_i] = v$;
			\item for every $k$, $f: \R \rightarrow \R$ s.t. $e^{-|x|}f^2(x) \in L^1(\R)$, we have $\underset{\s}{\sup} \ \E_\s[f^2(X_1)] < \infty$ and 
				\begin{equation}
				\underset{\s}{\sup} \ |\E_\s [f(X_k)] - \E_\nu[f(X_1)]| \le C(f) \Phi(k);	
				\end{equation}
			\item for every $k,n$ and $f$ such that $f: \R \rightarrow \R$ s.t. $e^{-|x|}f^2(x) \in L^1(\R)$,
				\begin{equation}
				\underset{\s}{\sup} \ |\Cov_\s [f(X_k),f(X_{n})] - \Cov_\nu[f(X_1), f(X_{n-k+1})]| \le C(f) \Phi(k);	
				\end{equation}
				\begin{equation}\label{eq:cov}
				\underset{\s}{\sup} \ |\Cov_\s [f(X_k),f(X_{n})]| \le C \ \underset{\s}{\sup} \ \E_\s [f(X_1)^2]^{1/2} \ \Phi(n-k);	
				\end{equation}
				
			\item for every $k,n$ such that $k \ge c^* n$ and any bounded function $F : \R^n \rightarrow \R$,
				\begin{equation}
				\underset{\s}{\sup} \ |\E_\s [F(X_k,...,X_{k+n-1})] - \E_\nu[F(X_1,...,X_{n})]| \le C ||F||_\infty \Phi(k).
				\end{equation}
		\end{enumerate}
\end{enumerate}

Then there exists $s \ge 0$ such that 
$$\dfrac{\sum_{i=1}^n X_i - vn}{\sqrt{n}} \overset{\mathcal{L}}{\longrightarrow} \mathcal{N}(0,s^2).$$ 

Moreover, this convergence holds uniformly in the initial configuration in the following sense:

$$\forall a<b, \ \underset{\s}{\sup} \ | \P \left( a \le \dfrac{\sum_{i=1}^n X_i - vn}{\sqrt{n}} \le b \right) - \P \left( a \le N \le b \right) | \underset{n \rightarrow \infty}{\longrightarrow} 0, $$

where $N \sim \mathcal{N}(0,s^2)$.

\end{thm}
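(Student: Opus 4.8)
The plan is to follow the classical martingale-CLT strategy for stationary sequences (in the spirit of the Gordin/Hannan decomposition, or the blocking method), but to track uniformity in the initial configuration throughout. Write $S_n = \sum_{i=1}^n X_i$. The starting point is that the hypotheses in (2) say that the sequence $(X_i)$ is, up to a summable-in-$k$ error $\Phi(k)$, indistinguishable from the stationary sequence obtained by starting from $\nu$; so the first step is to reduce to the stationary case. Concretely, I would let $\bar X_i$ denote the sequence started from $\nu$, couple the two, and show that moments and covariances of $f(X_i)$ match those of $f(\bar X_i)$ up to $C(f)\Phi(i)$; since $\sum_i \Phi(i) < \infty$ (a fortiori from $e^{(\log n)^2}\Phi(n)\to 0$), errors coming from the first $O(1)$ coordinates are negligible after dividing by $\sqrt n$, and the tail is handled by the stationary estimates. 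This is exactly what makes the limit law, and the variance $s^2$, not depend on $\s$.

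Next, for the stationary sequence I would establish that $s^2 := \lim_n \tfrac1n \Var_\nu(S_n)$ exists and is finite: using hypothesis 2(d), $\Var_\nu(S_n) = \sum_{i,j\le n}\Cov_\nu(X_i,X_j)$, and the bound \eqref{eq:cov} with $f=\mathrm{id}$ gives $|\Cov_\nu(X_i,X_j)|\le C\,\Phi(|i-j|)$ with $\Phi$ summable, so the Cesàro limit exists by dominated convergence on the diagonals. Then I would run a block argument: fix a slowly growing block length $p=p(n)$ (e.g.\ $p\sim(\log n)^{3}$, which is $o(\sqrt n)$ but grows fast enough that $\Phi(p)\to 0$ faster than any needed rate, thanks to 2(a)), split $\{1,\dots,n\}$ into big blocks of length $p$ separated by small blocks of length $q=q(n)$ with $c^* q \le p$ type spacing, so that hypothesis 2(e) makes the big blocks \emph{asymptotically independent} (the total error is $(n/p)\cdot C\|F\|_\infty\Phi(q)$, which we force to $0$). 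The small blocks contribute negligibly in $L^2$ because each has bounded second moment by 1(a) and there are $o(\sqrt n)$ of them in total length relative... more precisely their total variance is $O((n/p)\cdot p_{\text{small}})=o(n)$ for suitable choices. Then the sum over big blocks is (after the asymptotic-independence replacement) a genuine sum of i.i.d.-like terms with variance $\sim s^2 p$, and a Lindeberg/Lyapunov check — using the uniform second-moment bound 1(a) to control truncation — gives the CLT for $S_n/\sqrt n$. For the uniform version, one keeps $\s$ generic everywhere: every error term above is bounded by an expression of the form $C\cdot(\text{polynomial in }n/p)\cdot\Phi(\cdot)$ with constants independent of $\s$, and the Lindeberg condition is verified with the $\sup_\s$ moment bound, so the Berry–Esseen-type closeness $\sup_\s|\P_\s(a\le S_n/\sqrt n\le b)-\P(a\le N\le b)|\to 0$ follows.

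The step I expect to be the main obstacle is the careful bookkeeping in the blocking argument: choosing $p(n)$ and $q(n)$ simultaneously so that (i) the asymptotic-independence error $(n/p)\Phi(q)\to 0$ (this is where the very strong decay $e^{(\log n)^2}\Phi(n)\to 0$ of hypothesis 2(a) is essential — ordinary polynomial mixing would not obviously suffice once one insists on uniformity), (ii) $q = o(p)$ and the spacing condition $k\ge c^* n$ of 2(e) is met blockwise, (iii) the small-block contribution is $o(\sqrt n)$ in $L^2$, and (iv) the truncation/Lindeberg step goes through with the single uniform bound $\sup_{\s,n}\E_\s[X_n^2]<\infty$ — note we are \emph{not} given uniform control of higher moments, so one must do Lindeberg rather than Lyapunov, and the truncation level must be chosen to mesh with $p$. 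A secondary subtlety is that $s^2$ may be $0$; in that degenerate case the "CLT" is just the statement $S_n/\sqrt n \to 0$ in probability uniformly, which actually comes out for free from the variance computation $\Var_\s(S_n)=o(n)$ (again uniformly, since the covariance bounds are uniform), and the remark after the theorem statement already anticipates this. Finally, passing from convergence in distribution to the uniform-in-$(a,b)$ statement uses that the limit $\mathcal N(0,s^2)$ has a continuous CDF (when $s>0$), so pointwise convergence of distribution functions upgrades to uniform convergence on $\R$, and the $\sup_\s$ is preserved because all the quantitative bounds feeding the argument were already uniform in $\s$.
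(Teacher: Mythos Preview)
Your strategy is genuinely different from the paper's and, as written, has a real gap in the blocking step.

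The paper does \emph{not} use Bernstein blocks. For bounded variables it invokes Bolthausen's Stein-type criterion: with $Y_i=X_i-\E_\nu[X_1]$, $\ell_n=n^{1/3}$, $S_{j,n}=\sum_{|k-j|\le\ell_n}Y_k$ and $\alpha_n=\sum_j\E_\sigma[Y_jS_{j,n}]$, one shows $\sup_\sigma\sup_{|\lambda|\le R}\big|\E_\sigma[(i\lambda-S_n/\sqrt{\alpha_n})e^{i\lambda S_n/\sqrt{\alpha_n}}]\big|\to 0$. Uniformity in $\sigma$ is then obtained by a soft diagonal-extraction argument (if uniformity failed, pick $(\sigma_n,k_n)$ witnessing this; the resulting single sequence of laws still satisfies Bolthausen's hypotheses, contradiction). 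Finally the unbounded case is handled by truncation $X=T^N(X)+R^N(X)$, bounding the remainder via 2(c) (which gives uniform fourth moments and sub-exponential tails) together with \eqref{eq:cov}.

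The problem with your blocking sketch is the spacing constraint in 2(e). You write ``$c^*q\le p$ type spacing'', but 2(e) requires the \emph{starting index} to exceed $c^*$ times the \emph{block length}: after restarting via the Markov property 1(c) at the end of a big block, applying 2(e) to the next big block of length $p$ needs a gap $q\ge c^*p$. Since $c^*\ge 1$, the gaps are at least as long as the big blocks, and their total contribution to $S_n$ has variance of order $n$, not $o(n)$. So step (iii) of your bookkeeping (``small blocks contribute negligibly in $L^2$'') fails exactly under the hypotheses you have. The Bolthausen route sidesteps this because it never tries to discard a portion of the sum; hypothesis 2(e) enters only in comparing finite-dimensional laws to $\nu$, not in manufacturing independence between large pieces of $S_n$.

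Two smaller points. First, your worry that ``we are not given uniform control of higher moments'' is unfounded: 2(c) with $f(x)=x^m$ (indeed $e^{-|x|}x^{2m}\in L^1$) gives $\sup_\sigma\E_\sigma[X_1^{2m}]<\infty$ for every $m$, and the paper uses this explicitly in the truncation step. Second, your way of upgrading to the $\sup_\sigma$ statement (track every bound uniformly) is workable in principle, but the paper's subsequence trick is both shorter and more robust, since it reduces the uniform statement to the already-established non-uniform one.
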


Note that the hypothesis 2.(d) differs from the one in \cite{BDT19}. It is the correct hypothesis and should be corrected in the original article. It was however verified in the application therein, see \cite[Lemma 7.3]{BDT19}.

\begin{proof}
We first focus on bounded random variables. Let $(X_i)_{i \ge 1}$, satisfying the hypotheses. Let us define like in \cite{BDT19} $Y_i := X_i - \E_\nu [X_1]$, $\el_n = n^{1/3}$, $S_n = \sum_{i=1}^n Y_i$, $S_{j,n} = \sum_{i=1}^n \mathds{1}_{|k-j| \le l_n} Y_i$ and finally $\alpha_n = \sum_{i=1}^n \E_\s[Y_j S_{j,n}]$. and assume each $Y_i$ is bounded. We follow the same line of arguments, replacing the "Bolthausen Lemma" \cite[Lemma A.3]{BDT19} with an adapted version:

\begin{lem}\label{Bol}
Let $(\nu_n^\s)_{n\ge 0}$ be a family of probability measures on $\R$ such that:
\begin{enumerate}
\item $\underset{\s}{\sup} \ \underset{n}{\sup}\int |x|^2 d\nu_n^\s(x) < \infty$
\item $\forall R >0$, $\underset{n \rightarrow \infty }{\lim} \underset{\s}{\sup} \underset{|\lambda| \le R }{\sup} \left| \int (i\lambda -x)e^{i\lambda x} d\nu_n^\s(x) \right| = 0$
\end{enumerate}
Then for all $\s \in \Omega$, $(\nu_n^\s)_{n\ge 0}$ converges to the standard normal law. Furthermore, for all continuous bounded function $f$: 
\begin{equation}\label{eq:Bol1}
\underset{n \rightarrow \infty }{\lim} \ \underset{\s}{\sup} \left| \nu_n^\s(f) - \mu(f) \right| = 0, 
\end{equation}
where $\mu = \mathcal{N}(0,1)$.\\
Moreover, for all real numbers $a < b$,
\begin{equation}\label{eq:Bol2}
\underset{n \rightarrow \infty }{\lim} \ \underset{\s}{\sup} \left| \nu_n^\s(\mathds{1}_{[a,b]}) - \mu(\mathds{1}_{[a,b]})   \right| = 0
\end{equation}
\end{lem}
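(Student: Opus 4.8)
\textbf{Plan of proof (for Lemma \ref{Bol}).} The statement to prove is the converse direction: assuming a family of laws $(\nu_n^\s)$ satisfies the uniform moment bound (1) and the uniform ``characteristic-equation'' bound (2), deduce uniform convergence to $\mathcal N(0,1)$ in the three senses \eqref{eq:Bol1} and \eqref{eq:Bol2}. This is the uniform version of Bolthausen's characterization of the normal law (see \cite{B82}). I will follow Bolthausen's original argument but carry the supremum over $\s$ through every estimate.

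First I would set $g_n(\lambda) := \int e^{i\lambda x}\,d\nu_n^\s(x)$ (the characteristic function, with the $\s$-dependence suppressed in the notation) and introduce $r_n^\s(\lambda) := \int (i\lambda - x) e^{i\lambda x}\,d\nu_n^\s(x)$, so that hypothesis (2) reads $\sup_\s \sup_{|\lambda|\le R} |r_n^\s(\lambda)| \to 0$ for every fixed $R$. Differentiation under the integral sign (justified by (1)) gives $\frac{d}{d\lambda} g_n(\lambda) = i\int x e^{i\lambda x}\,d\nu_n^\s(x)$, hence the identity
$$\frac{d}{d\lambda} g_n(\lambda) + \lambda\, g_n(\lambda) = i\, r_n^\s(\lambda).$$
Multiplying by the integrating factor $e^{\lambda^2/2}$ and integrating from $0$ to $\lambda$ (using $g_n(0)=1$) yields
$$g_n(\lambda) = e^{-\lambda^2/2}\left(1 + i\int_0^\lambda e^{u^2/2}\, r_n^\s(u)\,du\right).$$
On any fixed compact $[-R,R]$ the integral term is bounded in modulus by $R\, e^{R^2/2}\,\sup_\s\sup_{|u|\le R}|r_n^\s(u)|$, which tends to $0$ uniformly in $\s$. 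Therefore $\sup_\s \sup_{|\lambda|\le R} |g_n(\lambda) - e^{-\lambda^2/2}| \to 0$ for every $R$; that is, the characteristic functions converge to that of $\mathcal N(0,1)$ uniformly on compacts, uniformly in $\s$. This already gives pointwise-in-$\s$ convergence in distribution to $\mathcal N(0,1)$ by L\'evy's theorem.

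To upgrade to the uniform statements \eqref{eq:Bol1} and \eqref{eq:Bol2} I would argue by contradiction using tightness. The uniform second-moment bound (1) gives, via Markov's inequality, $\sup_\s \nu_n^\s(\{|x|>K\}) \le M/K^2$ with $M := \sup_\s\sup_n \int |x|^2 d\nu_n^\s < \infty$, so the family $\{\nu_n^\s : n,\s\}$ is tight. Suppose \eqref{eq:Bol1} fails for some bounded continuous $f$: then there are $\varepsilon>0$, a subsequence $n_k$ and configurations $\s_k$ with $|\nu_{n_k}^{\s_k}(f) - \mu(f)| \ge \varepsilon$. By tightness, extract a further subsequence along which $\nu_{n_k}^{\s_k}$ converges weakly to some probability law $\pi$; but the uniform characteristic-function convergence forces the characteristic function of $\pi$ to be $e^{-\lambda^2/2}$, so $\pi = \mathcal N(0,1)$, whence $\nu_{n_k}^{\s_k}(f)\to \mu(f)$, a contradiction. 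The same contradiction scheme handles \eqref{eq:Bol2}: the only extra ingredient is that the limit law $\mathcal N(0,1)$ has no atoms at $a$ or $b$, so $\mathds 1_{[a,b]}$ is a.e.-$\pi$ continuous and the weak limit gives convergence of $\nu_{n_k}^{\s_k}(\mathds 1_{[a,b]})$ to $\mu(\mathds 1_{[a,b]})$; alternatively one bounds $|\nu_n^\s(\mathds 1_{[a,b]}) - \mu(\mathds 1_{[a,b]})|$ directly by a smoothing argument (sandwich $\mathds 1_{[a,b]}$ between continuous functions agreeing with it outside $\delta$-neighbourhoods of $a,b$ and use \eqref{eq:Bol1} plus $\mu([a-\delta,a]\cup[b,b+\delta])\le C\delta$), which avoids the contradiction argument altogether and is perhaps cleaner to write.

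\textbf{Main obstacle.} There is no deep difficulty; the point is purely bookkeeping --- making sure the supremum over $\s$ passes through each step, in particular that the differentiation-under-the-integral and the integrating-factor manipulation are justified uniformly (they are, since only the uniform bound (1) and the uniform bound (2) on compacts are used, and the integrating factor $e^{\lambda^2/2}$ is a fixed deterministic function). The one place requiring a little care is the passage from uniform-on-compacts convergence of characteristic functions to the uniform convergence of expectations \emph{against all bounded continuous $f$}: a single $f$ need not be well-approximated by its Fourier transform on a compact frequency window, so one genuinely needs the tightness/compactness input (1) to control the tails, which is exactly the role hypothesis (1) plays. Once tightness is in hand, the contradiction-plus-subsequence argument is routine.
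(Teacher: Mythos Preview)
Your proof is correct and follows essentially the same route as the paper: a contradiction/subsequence argument reducing the uniform statement to the non-uniform one. The only difference is packaging --- the paper simply picks the bad diagonal sequence $(\nu_{k_n}^{\s_n})$, observes that it satisfies the hypotheses of the original (non-uniform) Bolthausen lemma \cite[Lemma A.3]{BDT19}, and cites that lemma as a black box, whereas you unpack Bolthausen's ODE-for-the-characteristic-function argument explicitly and carry the $\sup_\s$ through; your version is more self-contained, the paper's is terser, but the logical skeleton (tightness from (1), subsequence, identify limit as $\mathcal N(0,1)$, contradiction) is identical.
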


\begin{proof}[Proof of Lemma \ref{Bol}]
Suppose that there is no uniformity in \eqref{eq:Bol1}. Then there exist $\varepsilon >0$, an increasing sequence of integers $(k_n)$ and a sequence $(\s_n)$ such that:
$$\left| \nu_{k_n}^{\s_n}(f) - \mu(f) \right| > \varepsilon $$
Then the sequence of measures $(\nu_{k_n}^{\s_n})$ clearly satisfies the hypothesis of the original Lemma \cite[Lemma A.3]{BDT19} so should converge to a standard normal law, which is a contradiction with the above equation.
\end{proof}

Looking at the proof of Theorem A.1 in \cite{BDT19}, the distributions of $(\dfrac{S_n}{\sqrt{\alpha_n}})_{n \ge 0}$ satisfy the stronger hypotheses (in particular item 2) of the above Lemma. Consequently, we have the conclusion of Theorem \ref{TCLgen} for bounded variables.\\

We now give details on how to extend the bounded case to the general case. We no longer assume the $(Y_i)_{i \ge 0}$ to be bounded. Let $a \in \R$ be a real number and $\varepsilon >0$. Let $N \ge 0$ be an integer that will be fixed later, and define the truncation operator $T^N(x) := \max\{ \min(x,N),-N \}$, and the remainder $R^N(x):= x - T^N(x)$.\\

Then we can estimate:

\begin{align}
\P_\s &\left(\dfrac{\sum_{i=1}^n Y_i}{\sqrt{n}} \ge a \right) = \P_\s \left(\dfrac{\sum_{i=1}^n (T^N(Y_i) - \E_\nu[T^N(Y_i)]) + (R^N(Y_i) - \E_\nu[R^N(Y_i)])}{\sqrt{n}} \ge a \right) \nonumber\\
&\le \P_\s \left( \dfrac{\sum_{i=1}^n (T^N(Y_i) - \E_\nu[T^N(Y_i)])}{\sqrt{n}} \ge a - \varepsilon \right) + 2 \P_\s \left( \dfrac{|R^N(Y_i) - \E_\nu[R^N(Y_i)]|}{\sqrt{n}} \ge \varepsilon \right). \label{eq:reste}
\end{align}

We first handle the last term:

\begin{align*}
\P_\s \left( \dfrac{|R^N(Y_i) - \E_\nu[R^N(Y_i)]|}{\sqrt{n}} \ge \varepsilon \right) &\le \dfrac{1}{\varepsilon^2 n} \Var_\s \left(\sum_{i=1}^n R^N(Y_i)\right)\\
&\le \dfrac{1}{\varepsilon^2 n} \left( \sum_{i=1}^n \Var_\s(R^N(Y_i)) + \sum_{i\ne j} \Cov_\s(R^N(Y_i),R^N(Y_j))  \right).
\end{align*}

\begin{itemize}
\item For any $i$, note that we can easily bound $\E_\s [R^N(Y_i)^2]$:
\begin{align*}\E_\s(R^N(Y_i)^2) &= \E_\s[(Y_i-N)^2\mathds{1}_{Y_1\ge N}] + \E_\s[(Y_i+N)^2\mathds{1}_{Y_1\le -N}]\\
	&\le 4\sqrt{2} (\E_\s[Y_i^4] + N^4)^{1/2} \P_\s(|Y_i| \ge N)^{1/2}\\
	&\le 4\sqrt{2} (C + N^4)^{1/2} \P_\s(|Y_i| \ge N)^{1/2}\\
	&\le C' (C + N^4)^{1/2} e^{-N/4},
\end{align*}
where $C$ does not depend on $\s$ or $i$. The variables $Y_i$ have a bounded forth moment thanks to hypothesis 2.(c). The last bound comes from the exponential Chebychev inequality and hypothesis 2.(c).\\
From this we can conclude that 
$$ \dfrac{1}{\varepsilon^2 n} \sum_{i=1}^n \Var_\s(R^N(Y_i)) \le C \varepsilon^{-2}e^{-cN},$$
with $c,C >0$ independent of $n,N,\s$.

\item From assumption 2.(d) (equation \eqref{eq:cov}), we get that:
\begin{align*}
\sum_{i\ne j}^n \Cov_\s(R^N(Y_i),R^N(Y_j)) &\le \sum_{i=1}^n \sum_{j=1}^\infty \underset{\s}{\sup} \ \E_\s [R^N(Y_1)^2]\Phi(|j-i|) \\
	&\le C e^{-cN} \sum_{i=1}^n \sum_{j=1}^\infty \Phi(|j-i|) \\
	&\le C' n e^{-cN},
\end{align*} 
where we used the bound on $\E_\s[R^N(Y_1)^2]$ previously found for the second inequality, and the fact that $\sum_j \Phi(j) < \infty$ for the last one.\\
\end{itemize}

Putting the last two inequalities into \eqref{eq:reste}, we get:

$$\P_\s \left(\dfrac{\sum_{i=1}^n Y_i}{\sqrt{n}} \ge a \right) \le \P_\s \left( \dfrac{\sum_{i=1}^n (T^N(Y_i) - \E_\nu[T^N(Y_i)])}{\sqrt{n}} \ge a - \varepsilon \right) + C \varepsilon^{-2} e^{-cN}.$$

Let $N$ be such that $C \varepsilon^{-2} e^{-cN} \le \varepsilon$. We now use our theorem on the bounded variables $\left( T^N(Y_i) - \E_\nu[T_N(Y_i)] \right)$ to find:

$$\underset{n \rightarrow \infty}{\limsup} \ \underset{\s}{\sup} \left(\P_\s \left(\dfrac{\sum_{i=1}^n Y_i}{\sqrt{n}} \ge a \right) - \P(N \ge a- \varepsilon)\right) \le \varepsilon, $$

which in turn shows that 

$$\underset{n \rightarrow \infty}{\limsup} \ \underset{\s}{\sup} \left(\P_\s \left(\dfrac{\sum_{i=1}^n Y_i}{\sqrt{n}} \ge a \right) - \P(N \ge a)\right) \le 0. $$

It now remains to prove the reverse inequality. $\P_\s \left(\dfrac{\sum_{i=1}^n Y_i}{\sqrt{n}} \ge a \right)$ can be bounded below by:

\begin{align*}
 &\P_\s \left(\dfrac{\sum_{i=1}^n T^N(Y_i)-\E_\nu[T^N(Y_i)]}{\sqrt{n}} \ge a + \varepsilon \ \textup{and} \ \left|\dfrac{\sum_{i=1}^n R^N(Y_i)-\E_\nu[R^N(Y_i)]}{\sqrt{n}}\right| \le \varepsilon  \right)\\
	&\ge \P_\s \left(\dfrac{\sum_{i=1}^n T^N(Y_i)-\E_\nu[T^N(Y_i)]}{\sqrt{n}} \ge a + \varepsilon\right) - \P_\s \left(\left|\dfrac{\sum_{i=1}^n R^N(Y_i)-\E_\nu[R^N(Y_i)]}{\sqrt{n}}\right| \ge \varepsilon  \right)
\end{align*}

From here, we bound the last term as we have done previously, and find this time:

$$\underset{n \rightarrow \infty}{\limsup} \ \underset{\s}{\sup} \left(\P_\s \left(\dfrac{\sum_{i=1}^n Y_i}{\sqrt{n}} \ge a \right) - \P(N \ge a)\right) \ge 0, $$
which concludes the proof.
\end{proof}

\subsection{Proof of Theorem \ref{TCL}}

The aim of the following four lemmas is to justify the various hypotheses of Theorem \ref{TCLgen} on the increments $\xi_n := X_n - X_{n-1}$, where $X_n$ denotes the front at time $n$ of an FA-$1$f process on $\Z_-$ started at an arbitrary configuration $\e \in LO_-$. They can be proved in the exact same way as in \cite{BDT19}, with a minor change explained in the proof of Lemma \ref{LemTCL}. In the following, $\nu$ denotes the measure defined in Theorem \ref{ergo}.

\begin{lem}
For $f: \Z \rightarrow \R$ such that $e^{-|x|}f(x)^2 \in L^1,$ we have
\begin{equation}
\underset{\e \in LO_-}{\sup} \E_\e[f(\xi_1)^2] = c(f) < \infty.
\end{equation}
\end{lem}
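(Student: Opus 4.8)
The statement to prove is that $\sup_{\e \in LO_-} \E_\e[f(\xi_1)^2] < \infty$ whenever $e^{-|x|}f(x)^2 \in L^1$. The natural strategy is to reduce this to a tail estimate on the single increment $\xi_1 = X_1 - X_0$, namely that $\P_\e(|\xi_1| \ge k)$ decays exponentially in $k$, \emph{uniformly} in the initial configuration $\e \in LO_-$. Once we have such a bound of the form $\P_\e(|\xi_1| \ge k) \le C e^{-ck}$ with $C, c$ not depending on $\e$, the conclusion follows by a routine dyadic decomposition: writing $\E_\e[f(\xi_1)^2] = \sum_{k \ge 0} \E_\e[f(\xi_1)^2 \mathbf{1}_{k \le |\xi_1| < k+1}]$ and using Cauchy--Schwarz together with the hypothesis $e^{-|x|}f(x)^2 \in L^1$ to control $\E_\e[f(\xi_1)^4]$-type quantities, or more simply by noting that the measure $\P_\e \circ \xi_1^{-1}$ has a density (or is dominated by a measure) whose tails beat $e^{-|x|}$, so that $\int f(x)^2 \, \P_\e(\xi_1 \in dx) \le c' \int e^{-|x|} f(x)^2 \, dx < \infty$ with $c'$ uniform in $\e$.

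\textbf{The uniform tail bound on $\xi_1$.} The two directions are asymmetric. For the lower tail ($\xi_1$ very negative, i.e. the front moving far to the left in one unit of time), this is precisely controlled by the finite-speed-of-propagation estimate: Corollary \ref{vitessemax} gives $\P_\e(X_1 - X_0 < -c) \le e^{-c}$ for $c \ge \bar v$, and this bound is already uniform over all $\e \in LO_-$ since it only uses the graphical construction. For the upper tail ($\xi_1$ very positive, i.e. the front moving far to the right), the front can only advance rightward past a site if that site and everything to its left has become occupied, which requires a long sequence of legal updates; this can again be bounded by a sequence-of-rings event of the type in Proposition \ref{eventF}, giving $\P_\e(X_1 - X_0 > k) \le e^{-ck}$ uniformly in $\e$. (The presence of the boundary at $0$ in $\Z_-$ only helps here, since $X_0 \le -1 < 0$ and the boundary site is frozen at $0$.) Combining the two gives $\P_\e(|\xi_1| \ge k) \le C e^{-ck}$ uniformly, which is exactly what is needed.

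\textbf{Main obstacle.} The only genuinely delicate point is obtaining the \emph{uniformity} in $\e \in LO_-$ of the upper-tail estimate: a priori the initial configuration behind the front could be arranged adversarially, but since $\e \in LO_-$ means $\e$ is identically $1$ on the whole half-line to the left of $X_0$, the event $\{X_1 > X_0 + k\}$ forces filling-in activity that, by the graphical construction, depends only on the Poisson clocks and coins in a bounded time window and hence admits a bound independent of $\e$. This is the same observation used repeatedly in \cite{BDT19}; concretely, one invokes Proposition \ref{eventF} (or rather its one-sided analogue) to say that for the front to move right by $k$ there must be a chain of rings of total length $\ge k$ in time $1$, which has probability at most $e^{-ck}$ for $k \ge \bar v$. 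Everything else is the routine summation described above, and the lemma follows exactly as in \cite{BDT19}.
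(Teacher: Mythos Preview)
Your approach is correct and matches what the paper (which simply defers to \cite{BDT19}) has in mind: a uniform exponential tail bound $\P_\e(|\xi_1|\ge k)\le Ce^{-k}$ obtained from finite speed of propagation (Proposition~\ref{eventF} in fact handles both tails, since the front moves by exactly $\pm 1$ and any net displacement of size $k$ forces a chain of at least $k$ successive rings in time $1$), after which the hypothesis $e^{-|x|}f(x)^2\in L^1$ gives $\E_\e[f(\xi_1)^2]\le C\sum_x e^{-|x|}f(x)^2<\infty$ uniformly. The small slip that $X_0$ may equal $0$ rather than $X_0\le -1$ is harmless for the argument.
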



\begin{lem}\label{LemTCL}
There exists $\gamma > 0$ such that for $f : \Z \rightarrow R$ with $e^{-|x|}f(x)^2 \in L^1(\R)$,
\begin{equation}
\underset{\e \in LO_-}{\sup} | \E_\e [f(\xi_n)] - \E_\nu[f(\xi_1)] | \le C(f)e^{-\gamma e^{(\log n)^{1/4}}}.
\end{equation}
\end{lem}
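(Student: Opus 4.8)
\textbf{Proof plan for Lemma \ref{LemTCL}.}

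The plan is to reduce the statement to Theorem \ref{ergo}, which controls the distance between the law of the configuration seen from the front and its invariant measure $\nu$, exactly as in the corresponding step of \cite{BDT19}. First I would observe that $\xi_n = X_n - X_{n-1}$ is a function of the trajectory of the process seen from the front between times $n-1$ and $n$; more precisely, by the Markov property applied at time $n-1$ and the translation invariance of the graphical construction, $\E_\e[f(\xi_n)]$ equals $\E_{\tilde\mu^\e_{n-1}}[g(\tilde\e)]$ for a suitable functional $g$ (namely $g(\z) = \E_\z[f(X_1)]$ for the process started from $\z$ seen from the front), and similarly $\E_\nu[f(\xi_1)] = \E_\nu[g]$. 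So the quantity to bound is $|\E_{\tilde\mu^\e_{n-1}}[g] - \E_\nu[g]|$.

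The main point, and the place where the ``minor change'' relative to \cite{BDT19} enters, is that $g$ is not bounded and not supported on a bounded window, so one cannot directly feed it into the total-variation bound of Theorem \ref{ergo} on $[0,d^*t]$. The remedy is a two-step truncation. Using Proposition \ref{eventF} (finite speed of propagation) together with Corollary \ref{vitessemax} and Lemma \ref{vitessemin}, the increment $\xi_1$ has exponential tails uniformly in the starting configuration, and the value of $X_1$ depends (up to an event of probability $\le Ce^{-cR}$) only on the restriction of the configuration seen from the front to the window $[-R,R]$ for $R = (\log n)^2$, say. Hence $g$ can be replaced, up to an error $C(f)e^{-c(\log n)^2}$ in sup norm over all starting configurations, by a bounded functional $g_R$ depending only on coordinates in $[-R,R] \subset [0,d^*(n-1)]$ for $n$ large. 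Since $e^{-|x|}f(x)^2 \in L^1$, Cauchy--Schwarz and the uniform exponential tail of $\xi_1$ give $\|g_R\|_\infty \le C(f)$ (more precisely a bound growing at most polynomially in $R$, which is harmless). Now Theorem \ref{ergo} applies to $g_R$: the difference of expectations under $\tilde\mu^\e_{n-1}$ and under $\nu$ is at most $\|g_R\|_\infty$ times $\|\tilde\mu^\e_{n-1} - \nu\|_{[0,d^*(n-1)]} \le \exp(-c\, e^{(\log(n-1))^{1/4}})$.

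Collecting the two contributions, the error is $C(f)\bigl(e^{-c(\log n)^2} + e^{-c\,e^{(\log n)^{1/4}}}\bigr)$, and since $e^{(\log n)^{1/4}} = o((\log n)^2)$ the second term dominates, giving the claimed bound $C(f)e^{-\gamma e^{(\log n)^{1/4}}}$ after adjusting constants. I expect the only genuinely delicate point to be making the truncation of $g$ uniform in the initial configuration $\e \in LO_-$ and simultaneously controlling $\|g_R\|_\infty$: one must be careful that the exponential tail bound on $\xi_1$ from Corollary \ref{vitessemax} and Lemma \ref{vitessemin} is genuinely uniform over $LO_-$ (it is, since those statements are stated for all $\e \in LO_-$), and that the window $[-R,R]$ sits inside $[0,d^*(n-1)]$ for all large $n$, which holds as soon as $R = (\log n)^2 \le d^*(n-1)$. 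Everything else is a routine repetition of the argument in \cite{BDT19}.
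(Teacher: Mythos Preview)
Your overall plan --- Markov property at time $n-1$, localize $g(\z)=\E_\z[f(\xi_1)]$ to a finite window behind the front, then apply Theorem~\ref{ergo} --- is the same route the paper (and \cite{BDT19}) takes. Two concrete points need fixing, though.

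First, a reversed asymptotic: in fact $(\log n)^2 = o\bigl(e^{(\log n)^{1/4}}\bigr)$, not the other way around. With $R=(\log n)^2$ the truncation error $e^{-cR}=e^{-c(\log n)^2}$ is therefore the \emph{larger} of your two contributions, and your argument only yields a bound $C(f)e^{-c(\log n)^2}$, strictly weaker than the claimed $C(f)e^{-\gamma e^{(\log n)^{1/4}}}$. The cure is to take $R$ growing at least like $e^{(\log n)^{1/4}}$; the paper simply uses the full window $d^*(n-1)$ through the map $\Phi_{n-1}(\e)$ (equal to $\e$ on $[X(\e),X(\e)+d^*(n-1)]$ and $1$ elsewhere), so that the localization error is $O(e^{-cn})$ and the ergodic bound from Theorem~\ref{ergo} governs.

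Second, you misidentify the ``minor change'' relative to \cite{BDT19}. The localization of $g$ is already needed on $\Z$ and is handled there exactly by the $\Phi_t$ construction; it is not new. The actual new ingredient on $\Z_-$ is that your line ``by translation invariance of the graphical construction, $\E_\e[f(\xi_n)]=\E_{\tilde\mu^\e_{n-1}}[g(\tilde\e)]$'' is not valid as stated: the dynamics on $\Z_-$ are not translation-invariant, so the law of $\xi_1$ from a configuration with front at position $x$ depends on $x$ through the boundary at $0$, and $g$ is not a function of the configuration seen from the front alone. The paper's modification is precisely to absorb this into an extra bad event --- essentially that the front at time $n-1$ is within distance $\underline v(n-1)$ of the origin, of probability $O(e^{-c(n-1)})$ by Lemma~\ref{vitessemin} --- on whose complement the boundary is far enough that finite speed of propagation makes it invisible to $\xi_n$, after which one concludes exactly as in \cite{BDT19}. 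If you insert this event (and enlarge $R$), your argument goes through.
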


\begin{proof}
Here, we define as in \cite{BDT19}, the configuration $\Phi_t(\e)$ to be equal to $\e$ on $[X(\e),X(\e) + d^*t]$ and $1$ elsewhere (except at the origin). Let $X$, (resp.\ $\tilde{X}$) the front of the configuration starting form $\e$ (resp.\ $\Phi_t(\e)$), both being coupling via the standard coupling. In our case, it is possible that the interval $[X(\e),X(\e) + d^*t]$ contains the origin, which can mess up the original argument.
To prevent this, let us note the following fact:\\
if $t \ge 0, \e \in LO_-,$ the event $X \ne \tilde{X}$ is a subset of $\mathcal{A}_t := F(0,d^*t,1) \cup \{X_t \ge -\underline{v}t \}$.\\
For $\e \in LO_-$, $f$ an appropriate function, and $n \ge 0$, we get:
\begin{align*}
\E_\e [f(\xi_1)] - \E_{\Phi_{n-1}(\e)}[f(\xi_1)] &= \E[(f(X) - f(\tilde{X})) \mathds{1}_{X \ne \tilde{X}}] \\
	&\le \sqrt{\E \left[ (f(X) - f(\tilde{X}))^2 \right]} \sqrt{\P(\mathcal{A}_{n-1})}.
\end{align*}
 
Now note that $\P(\mathcal{A}_{n-1}) \le e^{-\frac{d^*(n-1)}{2}} + Ae^{-B(n-1)} = O(e^{-cn})$.\\
From there, we can conclude like in \cite{BDT19}.
\end{proof}

\begin{lem}
There exists $\gamma >0$ such that, for $f : \N \rightarrow \R, e^{-|x|}f(x)^2 \in L^1(\R)$ and $j<n$ two positive integers, 
\begin{enumerate}
\item $\underset{\e \in LO_-}{\sup} \left| \Cov_\e[f(\xi_j),f(\xi_n)] \right| \le C(f)e^{-\gamma e^{(\log(n-j))^{1/4}}},$ and the same holds for the covariance under $\nu$;
\item for $j \ge \frac{\bar{v}}{d^*}(n-j)$,
$$\underset{\e \in LO_-}{\sup} \left| \Cov_\e[f(\xi_j),f(\xi_n)] - \Cov_\nu[f(\xi_1),f(\xi_{n-j+1})] \right| \le C(f)e^{-\gamma e^{(\log j)^{1/4}}}. $$
\end{enumerate}
\end{lem}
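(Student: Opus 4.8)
The statement to prove is the two-part covariance lemma for the increments $\xi_n = X_n - X_{n-1}$. My approach follows the template already set up in the two preceding lemmas (in particular Lemma \ref{LemTCL}): exploit that the process seen from the front mixes superexponentially fast (Theorem \ref{ergo}) and that finite speed of propagation (Proposition \ref{eventF}, Corollary \ref{vitessemax}, Lemma \ref{vitessemin}) localizes the increment $\xi_j$ to a bounded neighborhood of the front at time $j-1$. The key observation is that $\xi_j$ is, up to an event of probability $O(e^{-cj})$, a function of the configuration seen from the front on a window of size $O(1)$ around time $j-1$, and similarly $\xi_n$ is essentially a function of $\tilde\s$ near time $n-1$.

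\emph{Part (1).} I would write $\Cov_\e[f(\xi_j),f(\xi_n)] = \E_\e[f(\xi_j)f(\xi_n)] - \E_\e[f(\xi_j)]\E_\e[f(\xi_n)]$ and condition on $\mathcal F_j$ using the Markov property (hypothesis 1.(c)-type argument): $\E_\e[f(\xi_j)f(\xi_n) \mid \mathcal F_j] = f(\xi_j)\,\E_{\s_j}[f(\xi_{n-j})]$. Then
$$\Cov_\e[f(\xi_j),f(\xi_n)] = \E_\e\!\left[f(\xi_j)\big(\E_{\s_j}[f(\xi_{n-j})] - \E_\nu[f(\xi_1)]\big)\right] + \Cov_\e\!\left(f(\xi_j),\,\E_\nu[f(\xi_1)]\right),$$
where the second term is $0$. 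Applying Cauchy–Schwarz and Lemma \ref{LemTCL} (applied at time $n-j$, uniformly in the configuration $\s_j$, which lies in $LO_-$ or can be reduced to that case), the inner factor is bounded by $C(f)e^{-\gamma e^{(\log(n-j))^{1/4}}}$, while $\E_\e[f(\xi_j)^2]^{1/2}$ is uniformly bounded by the first lemma of this block. This gives the claimed bound; the covariance under $\nu$ is the same computation with $\e$ replaced by a draw from $\nu$.

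\emph{Part (2).} When $j \ge \frac{\bar v}{d^*}(n-j)$, the window of size $\sim d^* j$ used to define $\Phi_j(\e)$ comfortably contains the region explored by the process up to time $n$ (which extends at most $\bar v(n-j)$ beyond the front at time $j$), so the coupling argument of Lemma \ref{LemTCL} applies to the \emph{joint} law of $(\xi_j,\xi_n)$ rather than just $\xi_j$. Concretely, couple the process from $\e$ with the process from $\Phi_j(\e)$ via the standard coupling; on the complement of the bad event $\mathcal A_j := F(0,d^*j,1)\cup\{X_j \ge -\underline v j\}$ (and a further finite-speed event controlling the spread after time $j$), the increments $(\xi_j,\xi_n)$ agree with those of a process started from $\Phi_j(\e)$, whose law behind the front at time $j$ is within distance $\exp(-c\,e^{(\log j)^{1/4}})$ of $\nu$ by Theorem \ref{ergo}. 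Taking expectations of the bounded function $(x,y)\mapsto f(x)f(y)$ composed with the relevant truncations, and using $\P(\mathcal A_j) = O(e^{-cj})$, yields
$$\big|\Cov_\e[f(\xi_j),f(\xi_n)] - \Cov_\nu[f(\xi_1),f(\xi_{n-j+1})]\big| \le C(f)\,e^{-\gamma e^{(\log j)^{1/4}}},$$
after also controlling the $L^2$ tails of $f$ via the truncation-and-remainder estimate already used in the proof of Theorem \ref{TCLgen}.

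\emph{Main obstacle.} The delicate point is the boundary at the origin: the window $[X(\e), X(\e)+d^*t]$ may contain $0$, which is exactly the issue flagged in Lemma \ref{LemTCL}. The remedy is the same — absorb this into the bad event $\{X_t \ge -\underline v t\}$, which has probability $Ae^{-Bt}$ by Lemma \ref{vitessemin}, so the front is far from the origin on the good event and the $\Z_-$ argument reduces to the $\Z$ argument of \cite{BDT19}. A secondary technical nuisance is that $f$ is only $L^2$ against $e^{-|x|}$, not bounded, so all the Cauchy–Schwarz steps must be paired with the fourth-moment / exponential-tail bounds on $\xi_1$ (uniform in $\e$) to make the truncation error terms go to zero; this is routine given hypothesis 2.(c)-type control but must be stated carefully to preserve uniformity over $\e \in LO_-$.
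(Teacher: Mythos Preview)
Your approach is essentially the same as the paper's, which does not give a detailed proof but simply defers to \cite[Lemma 7.3]{BDT19} together with the boundary modification already explained in Lemma~\ref{LemTCL}; your Part~(1) via conditioning on $\mathcal F_j$ and your Part~(2) via the truncation $\Phi_j$ under the constraint $d^* j \ge \bar v(n-j)$ are exactly that argument.

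One small algebraic slip: your displayed decomposition in Part~(1) is not quite an identity. After conditioning on $\mathcal F_j$ and subtracting $\E_\nu[f(\xi_1)]$ inside, the remainder is $\E_\e[f(\xi_j)]\big(\E_\nu[f(\xi_1)] - \E_\e[f(\xi_n)]\big)$, not $\Cov_\e(f(\xi_j),\E_\nu[f(\xi_1)])$. This term is not zero, but by Lemma~\ref{LemTCL} it is bounded by $C(f)\Phi(n)\le C(f)\Phi(n-j)$, so the conclusion is unaffected.
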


\begin{lem}
For any $k,n \in \N$ such that $d^*(k-1) \ge \bar{v}n$ and any bounded function $F: \R^n \rightarrow \R$
$$\underset{\e \in LO_-}{\sup} \left| \E_\e [F(\xi_k,...,\xi_{k+n-1})] - \E_\nu[F(\xi_1,...,\xi_n)] \right| = O\left( ||F||_\infty e^{-\gamma e^{(\log k)^{1/4}}} \right). $$
\end{lem}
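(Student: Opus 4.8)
The plan is to obtain this finite-dimensional mixing estimate for the front increments by the same coupling-with-$\Phi_t$ device used in the proof of Lemma \ref{LemTCL}, upgraded from a single increment to a block of $n$ consecutive increments $(\xi_k,\dots,\xi_{k+n-1})$. Concretely, for a fixed $\e\in LO_-$ and a fixed horizon, I would first run the process up to time $k-1$, and compare the configuration $\e_{k-1}$ (seen from the front, on a window of length $d^*(k-1)$) to the invariant measure $\nu$ using Theorem \ref{ergo}: this gives $\|\tilde\mu^\e_{k-1}-\nu\|_{[0,d^*(k-1)]}\le \exp(-ce^{(\log(k-1))^{1/4}})$, uniformly in $\e$. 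Then by the Markov property at time $k-1$, $\E_\e[F(\xi_k,\dots,\xi_{k+n-1})]=\E_{\e_{k-1}}[F(\xi_1,\dots,\xi_n)]$, so it suffices to compare $\E_{\e'}[F(\xi_1,\dots,\xi_n)]$ for $\e'$ distributed according to $\tilde\mu^\e_{k-1}$ versus according to $\nu$.

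The key point making this comparison work is finite speed of propagation: by Proposition \ref{eventF} and Corollary \ref{vitessemax}, with probability at least $1-Ce^{-cn}$ the front moves at most $\bar v n$ to the right over the time interval $[0,n]$, hence the increments $\xi_1,\dots,\xi_n$ (and the positions $X_1,\dots,X_n$ relative to $X_0$) are determined by the restriction of the graphical construction and of the starting configuration to a spatial window of length $O(\bar v n)$ to the right of the front. Since the hypothesis $d^*(k-1)\ge \bar v n$ guarantees that this window of length $O(\bar v n)$ is contained in the window $[0,d^*(k-1)]$ on which $\tilde\mu^\e_{k-1}$ is close to $\nu$ in total variation, one can couple $\e'\sim\tilde\mu^\e_{k-1}$ and $\e''\sim\nu$ so that they agree on that window with probability $1-\exp(-ce^{(\log k)^{1/4}})$; on that event, under the standard coupling of the graphical construction, the two runs of the front coincide on $[0,n]$ up to the rare event (probability $Ce^{-cn}$) that the front escapes the window. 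Collecting the three error terms — the ergodicity error $\exp(-ce^{(\log k)^{1/4}})$ from Theorem \ref{ergo}, the coupling mismatch of the same order, and the $Ce^{-cn}$ finite-speed error — and using $\|F\|_\infty$ to bound $F$ on all the bad events, gives the stated bound $O(\|F\|_\infty e^{-\gamma e^{(\log k)^{1/4}}})$ (note $e^{-cn}$ is dominated by $e^{-\gamma e^{(\log k)^{1/4}}}$ precisely because $d^*(k-1)\ge\bar v n$ forces $n=O(k)$).

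The main obstacle, exactly as flagged in the proof of Lemma \ref{LemTCL}, is the boundary at the origin: the window $[X(\e),X(\e)+d^*(k-1)]$ may contain the site $0$, and the configuration $\Phi_{k-1}(\e)$ (equal to $\e$ on that window and $1$ outside, except fixed at $0$ at the origin) is then not quite a legitimate half-line configuration in the naive sense, and moreover the front could be close to the origin. As there, this is handled by absorbing it into the rare-event bound: on the complement of $\mathcal A_{k-1}:=F(0,d^*(k-1),1)\cup\{X_{k-1}\ge -\underline v(k-1)\}$ the front is at distance at least $\underline v(k-1)$ from the origin and no chain of rings connects the window to the origin during the relevant time, so the boundary plays no role; and $\P(\mathcal A_{k-1})\le e^{-d^*(k-1)/2}+Ae^{-B(k-1)}=O(e^{-cn})$ by Proposition \ref{eventF} and Lemma \ref{vitessemin}, which is again absorbed. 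Apart from this adjustment, the argument is the finite-dimensional analogue of the one in \cite{BDT19} and requires no new ideas; I would write it by invoking Theorem \ref{ergo}, Proposition \ref{eventF}, Corollary \ref{vitessemax} and Lemma \ref{vitessemin}, and referring to \cite{BDT19} for the routine bookkeeping.
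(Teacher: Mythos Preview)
Your approach is correct and is essentially the one the paper (via \cite{BDT19}) uses: Markov at time $k-1$, Theorem~\ref{ergo} for the law seen from the front on $[0,d^*(k-1)]$, a $\Phi$-type truncation and finite speed of propagation over the next $n$ steps, with the boundary at the origin handled exactly as in Lemma~\ref{LemTCL}.

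There is one slip in your bookkeeping. You couple on a window of size $O(\bar v n)$ and then claim that the resulting finite-speed error $Ce^{-cn}$ is dominated by $e^{-\gamma e^{(\log k)^{1/4}}}$ ``because $d^*(k-1)\ge \bar v n$ forces $n=O(k)$''. That implication goes the wrong way: $n=O(k)$ is an \emph{upper} bound on $n$, and for small $n$ (say $n=1$) the term $e^{-cn}$ is a constant, certainly not $O(e^{-\gamma e^{(\log k)^{1/4}}})$. The fix is immediate and is what the $\Phi_{k-1}$ construction actually does: couple on the \emph{larger} window $[0,d^*(k-1)]$ (Theorem~\ref{ergo} gives the same TV bound there) so that the only discrepancies are at distance $\ge d^*(k-1)$ from the front. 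Then, under the hypothesis $d^*(k-1)\ge \bar v n$, Proposition~\ref{eventF} gives a propagation error $\le e^{-d^*(k-1)}=e^{-ck}$, which \emph{is} dominated by the ergodicity error since $ck\gg e^{(\log k)^{1/4}}$ for large $k$. With that one change your argument goes through.
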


From here, we can apply Theorem \ref{TCLgen} and conclude exactly like in \cite{BDT19}.

\section{Cut-off for FA-1f}\label{sec:6}

We are now ready to prove the main result. Let $\Lambda = [1,L]$, we fix $q > \bar{q}$ the parameter of the FA-1f process. Every constant introduced in this section implicitly depends on $q$. 

We split the proof into two cases. First, we tackle initial configurations with macroscopic sub-intervals of occupied sites. In this section, we shall call such an interval a \emph{particle cluster}. To tackle these, we need to study closely the fronts of each particle cluster that are going inward. We will see that after the time it takes for them to meet, the configuration created enough empty sites to relax to equilibrium quickly. Next, we study the initial configurations with no such particle cluster, that is configurations in $\h(0,L+1,\el)$ for a certain threshold $\el(L)$. We handle the latter category with softer arguments using the coupling of FA-1f and the contact process. 

\subsection{Configurations with macroscopic particle clusters}
Define, for any $L>0$ and $0 < \delta < 1$, $$\Omega^\delta = \{ \s \in \Omega_\Lambda \ | \ \s \in \h(0,L+1,\delta L) \}.$$
For $\s \in \Omega_{\Lambda}$, recall that $B(\s) := \max \{ h \ge 0 \ | \ \exists x \in [0,L-h], \ \restriction{\s}{[x+1,x+h]} \equiv 1 \}$ is the size of the largest particle cluster in $\s$.
The following result holds.

\begin{prop}\label{prop:cutofffull}
Let $\delta >0$, $\s_0 \in (\Omega^\delta)^c$ and $\varepsilon >0$. There exists $a = a(\varepsilon) >0$ such that,\\
if $t = \dfrac{B(\s_0)}{2v} + \dfrac{7av}{\underline{v}\delta}\sqrt{L}$, then:
$$ ||\mu_{t}^{\s_0}-\mu||_{TV} \le \dfrac{v^2}{\underline{v}^2\delta^2}\varepsilon + \phi(L),$$
with $\phi(L)$ independent of $\s_0$ and $\underset{L \rightarrow \infty}{\lim} \phi(L) = 0$.
\end{prop}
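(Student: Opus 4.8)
\textbf{Proof plan for Proposition \ref{prop:cutofffull}.}

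The plan is to exploit the hypothesis $\s_0 \in (\Omega^\delta)^c$, which means $\s_0 \notin \h(0,L+1,\delta L)$, so there is an interval of at least $\delta L$ consecutive occupied sites; hence $B(\s_0) \ge \delta L$, and this lower bound on $B(\s_0)$ is what lets us convert the $O(\sqrt{L})$ term into an $O(\sqrt{B(\s_0)})$ term later. Fix a particle cluster $I = [x+1,x+h]$ realizing $h = B(\s_0)$. The two endpoints $x$ and $x+h+1$ are empty sites in $\s_0$ (or outer boundaries), and I want to view them as the locations of two fronts of FA-1f half-line processes moving \emph{inward} into $I$. Concretely, by finite speed of propagation (Proposition \ref{eventF}), up to an error $e^{-cL}$ the evolution inside $I$ during $[0,t]$ depends only on the configuration and clock data in a neighbourhood of $I$ of width $O(\bar v t)$; restricting attention to $I$ plus a buffer and filling the complement arbitrarily, one may couple the genuine dynamics with a configuration whose restriction to (a translate of) $\Z_-$ lies in $LO_-$, with front starting at the right endpoint of $I$ and moving left, and symmetrically from the left endpoint moving right.

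Next I would apply the uniform CLT for the front, Theorem \ref{TCL}: after time $t = \frac{h}{2v} + \frac{a'}{\cdot}\sqrt{h}$, each of the two inward-moving fronts has, with probability $\ge 1 - \varepsilon'$ (uniformly over the initial configuration in $LO_-$, which is the crucial point), travelled a distance $vt \pm a'' \sqrt{t} = \frac{h}{2} + O(\sqrt{h})$ into $I$. Choosing the constant $a$ appropriately (and using $h = B(\s_0) \le L$ to replace $\sqrt{h}$ by $\sqrt{L}$, and $\sqrt{h} \ge \sqrt{\delta L}$ to absorb the buffer and lower-order terms), the two fronts have met by time $t$, so with probability $\ge 1 - 2\varepsilon'$ there is an empty site roughly in the middle of $I$; more precisely, at some intermediate time $s \le t$ of the form $\frac{h}{2v} - c\sqrt{h}$ we locate an empty site well inside $I$, and then feed this into the Zeros Lemma machinery. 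Using Lemma \ref{zeros1} (or the second Zeros Lemma \ref{zeros2}) started from that empty site, with the remaining time $t - s = O(\sqrt{L})$ being large enough that $2\underline v(t-s) \gg \el := \delta^2 L$ say, one shows that with probability $\ge 1 - \phi_1(L)$ the configuration $\s_s$ (hence $\s$ at a slightly later time after propagating the zeros outward through all of $[0,L+1]$) lies in $\h(0,L+1,\el)$ for a suitable $\el = \el(\delta,L)$ with $\el \to \infty$; here one also uses that the original zeros of $\s_0$ outside $I$, together with the freshly created ones, tile $[0,L+1]$ at scale $\el$.

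Finally, once we know that at some time $t' \le t - (L+1)^{1/4}$ (say) the configuration is in $\h(0,L+1,\el)$ with $\el$ a growing function of $L$, I would invoke Proposition \ref{relaxationfinie2} (or the Corollary following it) with the Markov property at time $t'$: conditioning on $\s_{t'} \in \h(0,L+1,\el)$, the remaining evolution over an extra time $\sim \el^\beta$ brings the process within $Le^{-c\el^{\beta/2}} = o(1)$ of $\mu$ in total variation. Collecting the error terms — the $e^{-cL}$ from finite speed, the $O(\varepsilon)$ (which comes out as $\frac{v^2}{\underline v^2 \delta^2}\varepsilon$ because of how the $\sqrt{L}$-budget is split between the two fronts and the Zeros Lemma, each contributing a constant-order multiple of $\varepsilon$ that gets rescaled by $v/(\underline v \delta)$), and the vanishing $\phi(L)$ from the Zeros Lemma and the final relaxation — yields the claimed bound $\|\mu_t^{\s_0} - \mu\|_{TV} \le \frac{v^2}{\underline v^2\delta^2}\varepsilon + \phi(L)$ with $t = \frac{B(\s_0)}{2v} + \frac{7av}{\underline v \delta}\sqrt{L}$.

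\textbf{Main obstacle.} The delicate point is not any single estimate but the bookkeeping that makes the $\sqrt{L}$ coefficient and the $\varepsilon$-prefactor come out as stated: one must carefully allocate the $O(\sqrt{L})$ time budget among (i) the fluctuations of the left front, (ii) the fluctuations of the right front, (iii) the time for the Zeros Lemma to spread empty sites to scale $\el$, and (iv) the final relaxation time $\el^\beta$, while keeping $\el \to \infty$ and ensuring all the tail bounds ($Ae^{-Bt}$, $c_1 t e^{-c_2(t\wedge \el)}$, etc.) are genuinely $o(1)$ in $L$. The other subtlety is that FA-1f is not attractive, so one cannot simply "fill in" the complement of $I$ monotonically; the reduction to two half-line fronts must be done through finite speed of propagation and the standard coupling, checking that the inward fronts are bona fide fronts of $LO_-$-configurations so that the \emph{uniform} version of the CLT (Theorem \ref{TCL}) — rather than a configuration-dependent statement — applies.
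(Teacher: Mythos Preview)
Your plan has a genuine gap: you treat only the \emph{single} largest cluster $I$, but $\s_0$ may contain several particle clusters of size close to $B(\s_0)$. Your sentence ``the original zeros of $\s_0$ outside $I$, together with the freshly created ones, tile $[0,L+1]$ at scale $\el$'' is exactly where the argument breaks. Outside $I$ the original zeros are only guaranteed to be at most $B(\s_0)$ apart, and the contact-process/Zeros Lemma mechanism propagates zeros at speed $\underline v$, not $v$. In the time budget $\frac{B(\s_0)}{2v}+O(\sqrt L)$ that mechanism therefore fills only clusters of size at most $\frac{\underline v}{v}B(\s_0)+O(\sqrt L)$. A second cluster of size $B(\s_0)-1$ (perfectly possible) is left with a solid block of length roughly $B(\s_0)(1-\underline v/v)$, which is still macroscopic; hence $\s_t\notin\h(0,L+1,\el)$ for any $\el=o(L)$ and you cannot feed the result into Proposition~\ref{relaxationfinie2}. (Relatedly, your parenthetical ``$2\underline v(t-s)\gg \el:=\delta^2 L$'' is false since $t-s=O(\sqrt L)$.) Your explanation of the prefactor $\frac{v^2}{\underline v^2\delta^2}$ as coming from ``splitting the $\sqrt L$-budget'' is accordingly off the mark.

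The paper's proof repairs precisely this point. It singles out \emph{all} clusters of size at least $\frac{\underline v}{v}B(\s_0)$, calls them $\Lambda_1,\dots,\Lambda_r$, and observes $r\le \frac{v}{\underline v\delta}$ since $B(\s_0)\ge\delta L$. In each $\Lambda_k$ the two inward fronts are tracked via the uniform CLT (your reduction to $LO_-$ is right, and is done carefully in Lemma~\ref{lem:TCL}); an iterative scheme (apply the map $F$ that runs the dynamics until the currently smallest surviving cluster collapses, then repeat) kills one large cluster per step and controls the total time by telescoping. The error is $r\varepsilon$ per iteration, hence $r^2\varepsilon\le\frac{v^2}{\underline v^2\delta^2}\varepsilon$ overall --- that is the true origin of the prefactor. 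Clusters smaller than $\frac{\underline v}{v}B(\s_0)$ are exactly those the contact-process argument does handle in time $\frac{B(\s_0)}{2v}$, which explains the threshold. After all this one lands in $\h(\Lambda,d)$ with $d=6a\sqrt L$ and finishes with Proposition~\ref{relaxationfinie2}, as you intended.
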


The proof of this proposition is divided into four lemmas. To make things easier, let us first introduce the sketch of the proof in words. In the case we are studying now, the initial configuration has at least one macroscopic particle cluster of size $\ge \delta L$. If we watch the evolution of the process in such an interval during a short period of time, we see that the two endpoints of this particle cluster behave just like fronts of FA-1f processes in half-lines (one left-oriented, the other one right-oriented). As a consequence, thanks to the Central Limit Theorem \ref{TCL} we can find an explicit time after which the fronts are much closer to each other, at a certain threshold distance $d$ of order $\sqrt{L}$. At this point, the space between the original positions of the fronts and the current one should contain a lot of zeros thanks to the Zeros Lemma \ref{zeros1}. We repeat this argument for every initial particle cluster, and thanks to appropriate choices of constants and coupling arguments  handling the space in between the clusters, we end up with a configuration in $\h(\Lambda, d)$ with high probability. From there, it will only remain to apply the relaxation result from Section \ref{sec:4} to conclude.

Before diving into the proof, we need to set a general framework. The following lemmas aim to explain in detail how a configuration with some particle clusters can turn the smallest of them into an interval with a lot of zeros.\\
Let us fix an arbitrary set of disjoint intervals $\Lambda_1,\dots,\Lambda_r \subset \Lambda$. We define their endpoints: $\Lambda_k = [a_k,b_k]$. Fix $\varepsilon >0$, and let $d=6a\sqrt{L}$, with $a$ a constant depending on $\varepsilon$ that will be determined later.\\
We now define a class of configurations that we consider in our proof. We say that a configuration $\s \in \Omega_\Lambda$ is in the class $\mathcal{C}$ if for all $1 \le k \le r$, there exist $X_k < Y_k \in \Lambda_k$ such that
\begin{enumerate}
\item $\s(X^k) = \s(Y^k) = 0,$
\item $\restriction{\s}{(X^k,Y^k)} \equiv 1,$
\item $\s \in \h(a_k,X^k,d) \cap \h(Y^k,b_k,d).$
\end{enumerate}
Note that $X^k$ and $Y^k$ are uniquely determined if $\s \in \mathcal{C} \setminus \h(\Lambda_k,d)$. If $\s \in \h(\Lambda_k,d)$, there can be multiple choices of $X^k$ and $Y^k$, but this case will be excluded in the following. 
Let us introduce a few functions on $\mathcal{C}$ (see Figure \ref{fig:lemma0}). Fix $\s \in \mathcal{C}$.
\begin{itemize}
\item $\kappa(\s) = \{ k \ | \ \s \notin \h(\Lambda_k,d) \}$ is the set of indices such that $Y^k-X^k > d$. We write $\kappa$ if the configuration $\s$ is clear from the context. 
\item $p(\s) = \# \kappa$ is the cardinality of $\kappa$,
\item $\el(\s) = \underset{k \in \kappa}{\min}(Y^k-X^k)$,
\item $t(\s) = \frac{\el(\s)}{2v} - \frac{2a}{v} \sqrt{\el(\s)}$. We set $t(\s)=0$ if $p(\s)=0$. 
\end{itemize}
Having set that, we define the random variable $F(\s) = \s_{t(\s)}$. That is, $F(\s)$ is the result of the FA-1f process with initial configuration $\s \in \mathcal{C}$ after a time $t(\s)$. The aim of the random function $F$ is to turn the smallest particle cluster $[X^k,Y^k]$  of size $\ge d$ into an interval in $\h(\cdot,d)$. Note that this construction depends on the family of intervals $\Lambda_1,...,\Lambda_r$ that is for now arbitrary.\\

\begin{figure}[h]
{\centering

\begin{tikzpicture}[scale=0.7]


\draw[-, very thick] (4.15,0)--(5.35,0); 
\draw[-, very thick] (10.15,0)--(10.45,0);
\draw[-, very thick] (15.15,0)--(17.35,0);

\draw[<->] (3,0.35)--(6.5,0.35) node[above,midway,scale=1.5]{$\Lambda_{1}$}; 
\draw[<->] (9,0.35)--(11.5,0.35) node[above,midway,scale=1.5]{$\Lambda_{2}$}; 
\draw[<->] (14,0.35)--(18.5,0.35) node[above,midway,scale=1.5]{$\Lambda_{3}$}; 

\draw[dashed] (0,0)--(4.15,0);
\draw[dashed] (5.35,0)--(10.15,0);
\draw[dashed] (10.45,0)--(15.15,0);
\draw[dashed] (17.35,0)--(20,0);

\draw[-] (3,0.3)--(3,-0.3);
\draw[-] (6.5,0.3)--(6.5,-0.3);
\draw[-] (9,0.3)--(9,-0.3);
\draw[-] (11.5,0.3)--(11.5,-0.3);
\draw[-] (14,0.3)--(14,-0.3);
\draw[-] (18.5,0.3)--(18.5,-0.3);


\draw (20.5,0) node[right,xshift=0cm, scale=1.5]{$\sigma \in \mathcal{C}$};


\draw[line width=8pt,dashed,opacity=0.4] (3,0)--(4.15,0);
\draw[line width=8pt,dashed,opacity=0.4] (5.35,0)--(6.5,0);
\draw[line width=8pt,dashed,opacity=0.4] (9,0)--(10.15,0);
\draw[line width=8pt,dashed,opacity=0.4] (10.45,0)--(11.5,0);
\draw[line width=8pt,dashed,opacity=0.4] (14,0)--(15.15,0);
\draw[line width=8pt,dashed,opacity=0.4] (17.35,0)--(18.5,0);


 \draw[thick] (4.15,0) circle(0.1) node[below,yshift=-0.1cm,scale=1]{$X^1$}; 
 \fill[white] (4.15,0) circle(0.1);
 
 \draw[thick] (5.35,0) circle(0.1) node[below,yshift=-0.1cm,scale=1]{$Y^1$}; 
 \fill[white] (5.35,0) circle(0.1);
 
 \draw[thick] (10.15,0) circle(0.1);
 \fill[white] (10.15,0) circle(0.1);
 
 \draw[thick] (10.45,0) circle(0.1);
 \fill[white] (10.45,0) circle(0.1); 
 
 \draw[thick] (15.15,0) circle(0.1) node[below,yshift=-0.1cm,scale=1]{$X^3$};
 \fill[white] (15.15,0) circle(0.1); 
 
 \draw[thick] (17.35,0) circle(0.1) node[below,yshift=-0.1cm,scale=1]{$Y^3$};
 \fill[white] (17.35,0) circle(0.1);
 
\draw[<->] (10.15,-0.5)--(10.45,-0.5) node[below,midway]{$\le d$};


\draw[-] (0,0.3)--(0,-0.3);
\draw[-] (20,0.3)--(20,-0.3);

 \draw[thick] (-0.25,0) circle(0.2); 
 \fill[white] (-0.25,0) circle(0.2);

 \draw[thick] (20.25,0) circle(0.2); 
 \fill[white] (20.25,0) circle(0.2);

\end{tikzpicture}\par}
\caption{A configuration in the set $\mathcal{C}$. Dashed zones represent zones in $\h(.,d)$. Here, we have $\kappa(\s) = \{1,3\}$, $p(\s)=2$ and $\el(\s) = Y^1 - X^1$.} 
\label{fig:lemma0}
\end{figure}
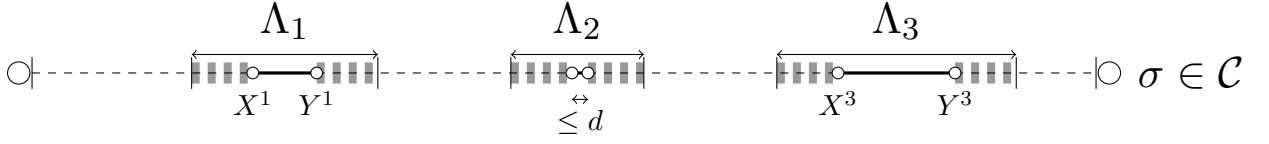


Let $\s \in \mathcal{C}$ and $(\s_t)_{t \ge 0}$ be a FA-1f process starting from $\s$. Define $X^k_t$ (resp.\ $Y^k_t$) as the position of the rightmost (resp.\ leftmost) zero in $[0, M^k]$ (resp.\ $[M^k, L+1]$) in $\s_t$, with $M^k := \dfrac{1}{2}(X^k+Y^k)$. We have $X^k_0 = X^k$ and $Y^k_0 = Y^k$. As long as $X_t^k$ and $Y_t^k$ do not meet, we think of $X^k_t$ as the "front" of a configuration on the (right oriented) half-line.
Let $\mathcal{R}_t = \{\forall k \in \kappa, \ \forall s < t, \ X^k_s < M^k - 1 \ \text{and} \ Y^k_s > M^k +1 \}$ the event that none of the "fronts" reaches the midpoint of the interval. In particular, under $\mathcal{R}_t$, the front have not met. The following Lemma localizes these fronts at time $t(\s)$ using Theorem \ref{TCL}.

\begin{lem} \label{lem:TCL}
There exists $a >0$ depending only on $\varepsilon$ and $q$ such that, with the notations previously introduced, and $d = 6a \sqrt{L}$, the following holds. For all $\s \in \mathcal{C}$ such that $p(\s) \ge 1$,
\begin{multline}
\P\left( \forall k \in \kappa, \ X^k_{t(\s)} \in [X^k + v t(\s) - a \sqrt{\el(\s)}, X^k + vt(\s) + a \sqrt{\el(\s)}], \right. \\
\left. \ Y^k_{t(\s)} \in [Y^k + v t(\s) - a \sqrt{\el(\s)}, Y^k + v t(\s) + a \sqrt{\el(\s)}], \mathcal{R}_{t(\s)}\right) \ge 1 - r\varepsilon + r\phi(L)
\end{multline}
with $\phi(L)$ independent of $\s$ such that $\underset{L \rightarrow \infty}{\lim}\phi(L) = 0$.\\
In particular, for $k_0$ such that $Y^{k_0}-X^{k_0} = \el(\sigma)$, the event above implies that: 
\begin{equation*}\label{eq:disp.petiteboite}
0 \le Y^{k_0}_{t(\s)} - X^{k_0}_{t(\s)} \le d
\end{equation*}

From now on, we call $$I_k = [X^k + v t(\s) - a \sqrt{\el(\s)}, X^k + vt(\s) + a \sqrt{\el(\s)}]$$ and $$J_k = [Y^k + v t(\s) - a \sqrt{\el(\s)}, Y^k + v t(\s) + a \sqrt{\el(\s)}].$$

\end{lem}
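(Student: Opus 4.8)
The plan is to apply the uniform central limit theorem (Theorem \ref{TCL}) separately to each of the fronts $X^k_t$ and $Y^k_t$ for $k \in \kappa$, and then union-bound over the $2p \le 2r$ fronts involved. The key point is that, up to the first time one of the "fronts" $X^k_t$ reaches the midpoint $M^k$, the process restricted to $[0,M^k]$ with $X^k_t$ tracking the rightmost zero in $[0,M^k]$ behaves exactly like the front of an FA-1f process on a right-oriented half-line started from a configuration in $LO$ (after reflection), because the kinetic constraints and rates only look at nearest neighbors and the region to the right of $X^k_t$ inside $(X^k,Y^k)$ is entirely filled with $1$'s at time $0$ and stays irrelevant until the front moves there. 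Formally one couples $(\s_t)$ restricted to $[0,M^k]$ with a genuine half-line FA-1f process $(\e^k_t)$ started from the reflected configuration, and this coupling is exact on the event $\mathcal{R}_t$. Thus on $\mathcal{R}_t$ we have $X^k_t = X^k + (\text{front displacement of } \e^k)$, and Theorem \ref{TCL} applies uniformly over the initial configuration.

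Next, I would choose the constant $a$. By Theorem \ref{TCL}, for each fixed pair $\alpha < \beta$, $\sup_{\e \in LO_-}|\P_\e(\alpha \le (X_t - vt)/\sqrt{t} \le \beta) - \P(\alpha \le N \le \beta)| \to 0$ as $t \to \infty$, where $N \sim \mathcal{N}(0,s^2)$. Pick $a$ large enough that $\P(-a \le N \le a) \ge 1 - \varepsilon/2$ (if $s = 0$ this is automatic for any $a > 0$; otherwise $a$ depends on $\varepsilon$ and on $s^2$, hence on $q$). With $t = t(\s) = \el/(2v) - (2a/v)\sqrt{\el}$, the target window for $X^k_{t(\s)}$ is $[X^k + vt(\s) - a\sqrt{\el}, X^k + vt(\s) + a\sqrt{\el}]$, i.e. we ask that the front displacement minus $vt(\s)$ lies in $[-a\sqrt{\el}, a\sqrt{\el}]$. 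Since $t(\s) \ge \el/(4v)$ for $\el$ large (as $\el > d = 6a\sqrt{L} \ge 6a\sqrt{\el}$), we have $\sqrt{\el} \ge \sqrt{t(\s)}$ up to a bounded constant, so $a\sqrt{\el} \ge c_0 a \sqrt{t(\s)}$ for an explicit $c_0 > 0$; absorbing constants (and using that $vt(\s) $ differs from $v \cdot (\el/2v)$ by $O(\sqrt{\el})$ which can be folded into the window at the cost of enlarging $a$), the window translates to an event of the form $\{(X_{t(\s)} - v t(\s))/\sqrt{t(\s)} \in [-a', a']\}$ with $a'$ a fixed multiple of $a$. Applying Theorem \ref{TCL} to each of the $p$ fronts $X^k$ and $p$ fronts $Y^k$, each such event has probability at least $1 - \P(|N| > a') - \phi_k(L)$ where $\phi_k(L) \to 0$ uniformly in the initial configuration; choosing $a$ (hence $a'$) so that $\P(|N| > a') \le \varepsilon$ and summing the error $\phi(L) := \max_k \phi_k(L)$ over the $\le r$ clusters gives the stated bound $1 - r\varepsilon - r\phi(L)$ (the sign in the statement should read $-r\phi(L)$; this is a harmless typo).

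I would then handle the event $\mathcal{R}_{t(\s)}$. On the complement of the union of the front-localization events we are already in the error term, so it suffices to note that the localization event for cluster $k$ together with the maximum-speed bound (Corollary \ref{vitessemax}) forces $X^k_s$ to stay below $M^k - 1$ and $Y^k_s$ above $M^k + 1$ for all $s \le t(\s)$: indeed $X^k_{t(\s)} \le X^k + vt(\s) + a\sqrt{\el} = M^k - (Y^k - X^k)/2 + vt(\s) + a\sqrt{\el} = M^k - \el/2 + \el/2 - 2a\sqrt{\el} + a\sqrt{\el} = M^k - a\sqrt{\el} < M^k - 1$ for $\el$ large, and before $t(\s)$ the front is even further left; symmetrically for $Y^k$. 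The midpoint separation $Y^{k_0}_{t(\s)} - X^{k_0}_{t(\s)} \le d$ for the minimizing cluster $k_0$ is then immediate: $Y^{k_0}_{t(\s)} - X^{k_0}_{t(\s)} \le (Y^{k_0} + vt(\s) + a\sqrt{\el}) - (X^{k_0} + vt(\s) - a\sqrt{\el}) = \el + 2a\sqrt{\el} - \el = 2a\sqrt{\el} \le 2a\sqrt{d} \cdot \sqrt{\el/d}$ — wait, more simply, since by definition $t(\s)$ was chosen so that $vt(\s) = \el/2 - 2a\sqrt{\el}$ and $Y^{k_0} - X^{k_0} = \el$, we get $Y^{k_0}_{t(\s)} - X^{k_0}_{t(\s)} \le \el - 2(\el/2 - 2a\sqrt{\el}) + 2a\sqrt{\el} = 6a\sqrt{\el} \le 6a\sqrt{L} = d$ using $\el \le L$; and the lower bound $0 \le Y^{k_0}_{t(\s)} - X^{k_0}_{t(\s)}$ holds on $\mathcal{R}_{t(\s)}$ by construction since the fronts have not crossed.

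The main obstacle is the first step: making rigorous the claim that $X^k_t$ (the rightmost zero in $[0,M^k]$) genuinely evolves as a half-line FA-1f front until it reaches $M^k$. One must check that the constrained dynamics inside $[0,M^k]$ does not feel the all-$1$'s region $(X^k, M^k]$ except through its left edge, which is precisely the front, and that the coupling with an honest $LO_-$-process is exact on $\mathcal{R}_t$ — this is where the careful bookkeeping of the graphical construction and the reflection $x \mapsto -x$ (to turn the right-oriented half-line into the left-oriented $\Z_-$ setting of Theorem \ref{TCL}) has to be done. Once that identification is in place, everything else is a union bound plus the quantitative choice of $a$, and the $\phi(L)$ error is inherited directly from the rate of convergence in Theorem \ref{TCL}.
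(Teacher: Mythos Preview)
Your overall architecture matches the paper's: define reflected half-line auxiliary processes, identify $X^k_t$ with their fronts on the event $\mathcal{R}_t$, apply Theorem~\ref{TCL} to each auxiliary front, choose $a$ from the Gaussian tail, and union-bound over the $\le r$ clusters. The final arithmetic for $Y^{k_0}_{t(\s)}-X^{k_0}_{t(\s)}\le 6a\sqrt{\el}\le d$ is also correct.

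The genuine gap is your treatment of $\mathcal{R}_{t(\s)}$. You write that the localization at time $t(\s)$ ``together with the maximum-speed bound (Corollary~\ref{vitessemax}) forces $X^k_s$ to stay below $M^k-1$ for all $s\le t(\s)$'' and that ``before $t(\s)$ the front is even further left''. Neither claim follows. Knowing $X^k_{t(\s)}\le M^k-a\sqrt{\el}$ says nothing directly about intermediate times: the front is random and could overshoot $M^k$ and return. The maximum speed $\bar v$ only bounds displacement by $\bar v\,t(\s)\sim \bar v\,\el/(2v)$, which exceeds $\el/2$ since $\bar v>v$, so Corollary~\ref{vitessemax} alone does not prevent crossing $M^k$. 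There is also a circularity issue: the identification of $X^k_t$ with the auxiliary half-line front is only valid \emph{on} $\mathcal{R}_t$, so you cannot use the CLT conclusion for $X^k$ to establish $\mathcal{R}_t$ without first passing to the auxiliary process.

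The paper fixes this by working entirely with the auxiliary process $\hat\s^k$ (always defined) and bounding $\P(\hat X^k_{t(\s)}\in -I_k,\ (\hat{\mathcal R}^k)^c)$ directly. It discretises $[0,t(\s)]$ into steps of length $\Delta\approx \tfrac{a}{2\bar v}\sqrt{\el}$; by Corollary~\ref{vitessemax} the front moves at most $\bar v\Delta\approx \tfrac{a}{2}\sqrt{\el}$ per step, so if it ever came within $1$ of $\hat M^k$ then at some grid time $s_i$ it was within $\tfrac{a}{2}\sqrt{\el}$ of $\hat M^k$. But then landing in $-I_k$ at time $t(\s)$ would require the front to move \emph{backward} by at least $\tfrac{a}{2}\sqrt{\el}$ over the remaining time, which Lemma~\ref{vitessemin} (the minimum-speed bound) makes exponentially unlikely. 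Thus both speed bounds are needed, and the discretisation is the missing idea in your sketch.
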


\begin{figure}[h]
{\centering

\begin{tikzpicture}[scale=0.7]
\draw[-, very thick] (8.7,0)--(10.90,0);
\draw[-, very thick] (3,3)--(17,3) node[midway,above,yshift=0.05cm,scale=1.2]{$M^{k_0}$}; 

\draw[dashed] (0,0)--(8.7,0);
\draw[dashed] (0,3)--(3,3);

\draw[dashed] (10.9,0)--(20,0);
\draw[dashed] (17,3)--(20,3);


\draw (20.5,3) node[right,xshift=-0.5cm, scale=1.5]{$\sigma$};
\draw (20.5,0) node[right,xshift=-0.4cm, scale=1.5]{$\sigma_{t(\sigma)}$};

\draw[dotted] (10,3)--(10,0);


\draw[-] (8.35,0.3)--(8.35,-0.3);
\draw[-] (9.45,0.3)--(9.45,-0.3);
\draw[-] (10.55,0.3)--(10.55,-0.3);
\draw[-] (11.65,0.3)--(11.65,-0.3);

\draw[<->] (8.35,-0.4)--(11.65,-0.4) node[below, midway]{$6a\sqrt{\ell (\sigma)}$};
\draw[->, thick] (3,-0.3)--(8.35,-0.3) node[below, midway]{$\sim vt(\sigma)$};


 \draw[-] (3,3) to[out=-5,in=170] (8.7,0); 
 \draw[-] (17,3) to[out=190,in=10] (10.9,0);


 \draw[thick] (3,3) circle(0.2) node[above,yshift=0.1cm,scale=1.3]{$X^{k_0}$}; 
 \fill[white] (3,3) circle(0.2);
 
 \draw[thick] (8.7,0) circle(0.2)node[above,yshift=0.2cm,scale=1.3]{$X^{k_0}_{t(\sigma)}$};
 \fill[white] (8.7,0) circle(0.2);
 
 \draw[thick] (17,3) circle(0.2) node[above,yshift=0.1cm,scale=1.3]{$Y^{k_0}$};
 \fill[white] (17,3) circle(0.2);
 
 \draw[thick] (10.9,0) circle(0.2)node[above,yshift=0.2cm,scale=1.3]{$Y^{k_0}_{t(\sigma)}$};
 \fill[white] (10.9,0) circle(0.2);

\end{tikzpicture}\par}
\caption{The expected behaviour of the smallest cluster of $\s$ during time $t(\s)$.} 
\label{fig:lemma1}
\end{figure}
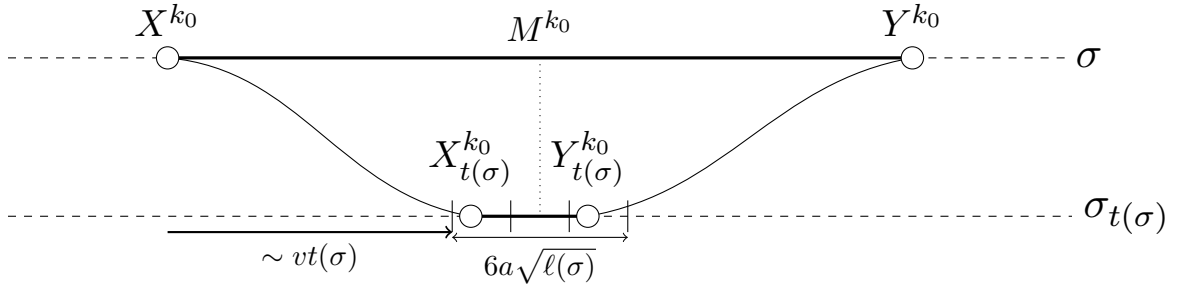

\begin{proof}
Define $\mathcal{R}^k_t = \{ \forall s \le t, \ X^k_s < M^k - 1 \ \text{and} \ Y^k_s > M^k +1 \}$ such that $\mathcal{R}_t = \bigcap_{1 \le k \le p(\s)} \mathcal{R}^k_t$. 
The key remark for what follows is that if the event $\mathcal{R}^k_t$ occurs, the fronts $X^k_s, Y^k_s$ behave like independent fronts of FA-1f processes on $\Z_-$ for all $s \le t$. To justify that, let us define auxiliary processes $(\hat{\s}^k_t)_{t \ge 0}$ and $(\check{\s}^k_t)_{t \ge 0}$ on $\Z_-$ as follows.
\begin{itemize}
\item $\forall x \le 0, \ \hat{\s}^k_0 (x) = \left\{ \begin{matrix} \s(-x) & \text{if} \ -x \le X^k, \\ 1 & \text{if} \ -x > X^k. \end{matrix} \right.$
\item $(\hat{\s}^k_t)_{t \ge 0}$ is the FA-1f process on $\Z_-$ starting from $\hat{\s}^k_0$ constructed with the standard coupling with respect to $(\s_t(\text{-}.))_{t \ge 0}$. We denote by $\hat{X}^k_t$ its front at time $t$.
\item $\forall x \le 0, \ \check{\s}^k_0 (x) = \left\{ \begin{matrix} \s(x+L+1) & \text{if} \ x+L+1 \ge Y^k, \\ 1 & \text{if} \ x + L +1  < Y^k. \end{matrix} \right.$
\item $(\check{\s}^k_t)_{t \ge 0}$ is the FA-1f process on $\Z_-$ starting from $\check{\s}^k_0$ constructed with the standard coupling with respect to $(\theta_{L+1}\s_t)_{t \ge 0}$. We denote by $\check{Y}^k_t$ its front at time $t$.
\end{itemize}

Let $\hat{\mathcal{R}^k_t} = \{ \forall s \le t, \hat{X}^k_s > -M^k + 1\}$, and $\check{\mathcal{R}^k_t} =  \{ \forall s \le t, \check{Y}
_s >  M^k - L \}$. Then, for all intervals $B, B' \subset \Lambda$, we have:
\begin{equation} \label{eq:frontssym}
\P_\s \left(X^k_t \in B, Y^k_t \in B' \ \textup{and} \ \mathcal{R}^k_t \right) = \P_{\hat{\s}^k_0} \left(\hat{X}^k_t \in -B \ \textup{and} \ \hat{\mathcal{R}}^k_t \right) \P_{\check{\s}^k_0} \left(\check{Y}^k_t \in \theta_{-L-1}(B') \ \textup{and} \ \check{\mathcal{R}}^k_t \right).
\end{equation}

We apply this with the intervals $I_k$ and $J_k$. The two factors in the right hand side are similar so we only study $\P_{\hat{\s}^k_0} \left(\hat{X}^k_t \in -I_k \ \textup{and} \ \hat{\mathcal{R}}^k_t \right)$. Note that $$\P_{\hat{\s}^k_0} \left(\hat{X}^k_t \in -I_k \ \textup{and} \ \hat{\mathcal{R}}^k_t \right) = \P_{\hat{\s}^k_0} \left(\hat{X}^k_t \in -I_k \right) - \P_{\hat{\s}^k_0} \left(\hat{X}^k_t \in -I_k \ \textup{and} \ (\hat{\mathcal{R}}^k_t)^c \right)$$

Thanks to Theorem \ref{TCL}, we have, for $N \sim \mathcal{N}(0,s^2)$,
\begin{align}
\P_{\hat{\s}^k_0} \left(\hat{X}^k_t \in -I_k \right) &=  \P_{\hat{\s}^k_0}\left( \dfrac{\hat{X}^k_t - \hat{X}_0^k - vt(\s)}{\sqrt{\el(\s)}/ (2v)} \in [-2va,2va] \right) \nonumber \\
		&\underset{L \rightarrow \infty}{\longrightarrow}  \P\left( N \in [-2va,2va] \right)
\end{align}
since $\el(\s) \ge d \rightarrow \infty$ as $L \rightarrow \infty$. This convergence is furthermore uniform in $\s$. 
We choose $a$ such that $\P\left( N \in [-2va,2va] \right) \ge \sqrt{1 - \varepsilon}$. Note that this choice only depends on $\varepsilon$ and $q$ (because the variance $s^2$ depends on $q$). This proves that
\begin{equation}\label{eq:TCL}
\P_{\hat{\s}^k_0} \left(\hat{X}^k_t \in -I_k \right) = \sqrt{1 - \varepsilon} + \phi_1(L)
\end{equation}
with $\phi_1(L)$ independent of $\s$ and $\underset{L \rightarrow \infty}{\lim}\phi_1(L) = 0$. We handle the term in $\check{Y}^k_t$ the same way and find the same kind of equation.
Note here that if $s^2 =0$, our estimate is even better as we would get $\P_{\hat{\s}^k_0} \left(\hat{X}^k_t \in -I_k \right) = 1- \phi_1(L)$.\\

Next, we show that if the fronts end up in the expected intervals at time $t(\s)$, then $\hat{\mathcal{R}}^k_{t(\s)}$ is very likely. We denote $\hat{M}^k = - M^k$ and we divide the time interval $[0,t(\s)]$ into intermediate times $0 < s_1 < ... < s_n = t(\s)$ with $n = \left\lceil \dfrac{2\bar{v}t(\s)}{a\sqrt{\el(\s)} - 2} \right\rceil$ such that $\Delta := s_{i+1} - s_i \le \dfrac{a}{2\bar{v}}\sqrt{\el(\s)} - \dfrac{1}{\bar{v}}$. Notice that:
\begin{multline*}
 \{\exists s < t(\s), \hat{X}^k_s \le \hat{M}^k + 1 \} \subset  \\
 \{\exists i, \hat{X}^k_{s_i} \le \hat{M}^k + \frac{a}{2}\sqrt{\el(\s)}  \} \cup \{\exists s < t(\s), \hat{X}^k_s \le \hat{M}^k + 1 \,\, \textup{and} \,\, \forall i, \hat{X}^k_{s_i} > \hat{M}^k + \frac{a}{2}\sqrt{\el(\s)}  \}.
\end{multline*}

First, with Corollary \ref{vitessemax} and with our choice of $\Delta$, we have that the second event is unlikely because of finite speed of propagation:
\begin{align*}
\P_\s \left(\exists s < t(\s), \hat{X}^k_s \le \hat{M}^k + 1 \right. & \left. \,\, \textup{and} \,\, \forall i, \hat{X}^k_{s_i} > \hat{M}^k + \frac{a}{2}\sqrt{\el(\s)} \right)\\
&\le \sum_{i=0}^{n-1} \P_\s \left( \exists  s \in [s_i,s_{i+1}], |\hat{X}^k_{s}-\hat{X}^k_{s_{i+1}}| \ge \frac{a}{2}\sqrt{\el(\s)} - 1 \right)\\
&\le \sum_{i=0}^{n-1} \P_\s \left( \exists s \in [s_i,s_{i+1}], |\hat{X}^k_{s}-\hat{X}^k_{s_{i+1}}| \ge \bar{v} (s_{i+1}-s_i)\right)\\
&\le Ce^{-c \sqrt{\el(\s)}} \le Ce^{-c \sqrt{d}} \hspace{10pt} \small{\textup{for some $C,c > 0$}}.
\end{align*}

Then, by the Markov property at time $s_i$, the probability that $\hat{X}^k_{s_i} \le \hat{M}^k + \frac{a}{2}\sqrt{\el(\s)}$ while $\hat{X}^k_{t(\s)}$ is to the right of that is bounded by the probability that the front is going backward for a time at least $\Delta$. This gives, thanks to Lemma \ref{vitessemin}:
\begin{multline*}
\P_\s \left(\hat{X}^k_{t(\s)} \in -I_k \,\, \textup{and} \,\, \exists i, \hat{X}^k_{s_i} \le \hat{M}^k + \frac{a}{2}\sqrt{\el(\s)} \right)\\
\le \sum_{i=0}^{n-1} \P_\s (\hat{X}^k_{t(\s)} > \hat{X}^k_{s_i}) \le nA e^{-B\Delta} \le A e^{-B'\sqrt{d}}.
\end{multline*}

Combining the previous inequalities shows that:
\begin{equation}\label{eq:eventRt}
\P_\s \left( \hat{X}^k_{t(\s)} \in -I_k \,\, \textup{and} \ (\hat{\mathcal{R}}^k_t)^c  \right) \le Ce^{-c\sqrt{d}},
\end{equation}
with $C,c > 0$ independent of $\s, L, \varepsilon$.

Combining Equations \eqref{eq:frontssym}, \eqref{eq:TCL} and \eqref{eq:eventRt}, since $d = 6a\sqrt{L}$, we get 
$$\P_\s \left(X^k_t \in I_k, \ Y^k_t \in J_k \ \textup{and} \ \mathcal{R}^k_t \right) \ge 1 - \varepsilon + \phi(L),$$
with $\phi(L)$ going to $0$ as $L \rightarrow \infty$, uniformly in $\s$.
By a union bound over all boxes $\Lambda_k$ for $k \in \kappa$, we get the expected result since the number of boxes is bounded by $r$.
\end{proof}

We have now localised the fronts in $F(\s)$, so we can use Lemma \ref{zeros1} to show that behind those fronts, with high probability, there are a lot of zeros (see Figure \ref{fig:lemma2}).

\begin{lem}\label{lem:zerosbehind}
Let $\s \in \mathcal{C}$ such that $p(\s) \ge 1$. With the previous notations, for all $k \in \kappa$, 
\begin{multline*}
\P_\s \left(X^{k}_{t(\s)} \in I_k, \ Y^{k}_{t(\s)} \in J_k, \ \mathcal{R}_{t(\s)} \right.\\
\left. \textup{and} \ \s_{t(\s)} \notin \left( \h(X^k,X^k_{t(\s)},d)\cap \h(Y^k_{t(\s)},Y^k,d) \right) \right) \le C \exp(-c\sqrt{L})
\end{multline*} 
for some $C,c > 0$ independent of every other parameter.
\end{lem}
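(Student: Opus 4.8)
The plan is to reduce, on the event $\mathcal R_{t(\s)}$, to an estimate for the auxiliary half-line process $\hat\s^k$ (resp.\ $\check\s^k$) built in the proof of Lemma~\ref{lem:TCL}, and then to feed that into the Zeros Lemma. By a union bound,
\[
\big\{\s_{t(\s)}\notin(\h(X^k,X^k_{t(\s)},d)\cap\h(Y^k_{t(\s)},Y^k,d))\big\}\subset\big\{\s_{t(\s)}\notin\h(X^k,X^k_{t(\s)},d)\big\}\cup\big\{\s_{t(\s)}\notin\h(Y^k_{t(\s)},Y^k,d)\big\},
\]
and the two events are symmetric, so I treat the first, intersected with $\{X^k_{t(\s)}\in I_k\}$ and with the $k$-th factor $\mathcal R^k_{t(\s)}$ of $\mathcal R_{t(\s)}$. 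On $\mathcal R^k_{t(\s)}$ one has $\s_s(M^k)=\s_s(M^k-1)=1$ for every $s\le t(\s)$, so $M^k$ is a barrier and the dynamics of $\s$ inside $[1,M^k-1]$ is determined by $\s_0$ on $[0,M^k]$ and the clocks there; since $\hat\s^k_0$ restricted to $[-M^k,0]$ is exactly the reflection of $\s_0$ restricted to $[0,M^k]$, runs with the same clocks, and has the symmetric barrier at $-M^k$ on $\hat{\mathcal R}^k_{t(\s)}$, the restriction of $\s_s$ to $[a_k,X^k_s]$ equals, for every $s\le t(\s)$, the reflection of the restriction of $\hat\s^k_s$ to $[\hat X^k_s,-a_k]$. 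As $\h$ is invariant under reflection, $\hat X^k_{t(\s)}=-X^k_{t(\s)}$, and $\h(X^k,X^k_{t(\s)},d)$ depends only on $\s_{t(\s)}$ on $[X^k,X^k_{t(\s)}]\subset[a_k,X^k_{t(\s)}]$, the event to control becomes $\{\hat\s^k_{t(\s)}\notin\h(\hat X^k_{t(\s)},-X^k,d)\}$ for the process $\hat\s^k\in LO_-$ with front $\hat X^k_0=-X^k$.

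Now $-X^k=\hat X^k_0\in[\hat X^k_0,0]$ and the condition $\hat\s^k_0\in\h(\hat X^k_0,\hat X^k_0,d)$ is vacuously true, so Lemma~\ref{zeros2} applies with $s=t(\s)$, $y=\hat X^k_0$ and threshold $\el=d$: provided $2\underline v\,t(\s)\ge d$,
\[
\P_{\hat\s^k_0}\!\big(\hat\s^k_{t(\s)}\notin\h(\hat X^k_{t(\s)},-X^k,d)\big)\ \le\ C\,(1+X^k)\,\frac{t(\s)^2}{d}\,e^{-c\,(t(\s)\wedge d)}\ \le\ C'\,e^{-c'\sqrt L},
\]
the last step using $X^k\le L$, $t(\s)\le L/(2v)$, $d=6a\sqrt L$ and $t(\s)\ge c\sqrt L$, so that $t(\s)\wedge d=\Theta(\sqrt L)$ absorbs the polynomial prefactor. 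The $\check\s^k$-side is identical, and summing these bounds over the two sides and over the at most $r$ indices $k\in\kappa$ gives the claimed bound. (One may equally take $y=-a_k$ and invoke property (3) of the class $\mathcal C$, namely $\s\in\h(a_k,X^k,d)$, which is probably closer to what will be reused later for iterating $F$; the estimate is the same.)

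Two points need care. First, the hypothesis $2\underline v\,t(\s)\ge d$ of Lemma~\ref{zeros2}: since $t(\s)\asymp\el(\s)/(2v)$ and $\el(\s)>d$, it holds as soon as $\el(\s)$ exceeds a fixed multiple of $d$; in the complementary regime, on $\{X^k_{t(\s)}\in I_k\}$ the window $[X^k,X^k_{t(\s)}]$ has length at most $v\,t(\s)+a\sqrt{\el(\s)}<d$ for $L$ large, and then $\h(X^k,X^k_{t(\s)},d)$ holds deterministically because it imposes no condition on intervals shorter than $d$; alternatively one uses only the front-chaining part of the Zeros Lemma (the chaining of Lemma~\ref{lem:contact} over intermediate times, which controls the region swept by the front with no relation between window and time) with a window $\el'\le\tfrac23 d$, giving $\h(X^k,\cdot,\el')\subset\h(X^k,\cdot,d)$. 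Second, one must verify carefully the barrier/coupling identification on $\mathcal R^k_{t(\s)}$, in particular that $\hat{\mathcal R}^k_{t(\s)}$ and $\mathcal R^k_{t(\s)}$ match up as in the proof of Lemma~\ref{lem:TCL}, so that the half-line estimate transfers verbatim to $\s$. The interplay in the first point — between the fixed regularity threshold $d=6a\sqrt L$ produced by Lemma~\ref{lem:TCL} and the available time budget $t(\s)$ — is the main obstacle; once the reduction to $\hat\s^k$ is set up, each individual step is routine.
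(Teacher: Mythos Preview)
Your approach is essentially the paper's: reduce on $\mathcal{R}^k_{t(\s)}$ to the half-line processes $\hat\s^k,\check\s^k$ via the coupling from Lemma~\ref{lem:TCL}, then apply Lemma~\ref{zeros2} with $y=\hat X^k_0$ and absorb the polynomial prefactor into the exponential. The paper writes the reduction as a product identity (which, as stated, should really be a bound), while you use a cleaner union bound; and you explicitly flag the hypothesis $2\underline v\,t(\s)\ge d$ of Lemma~\ref{zeros2}, which the paper applies silently---your observation that for $y=\hat X^k_0$ only the front-chaining part of that lemma is needed (where the condition $2\underline v s\ge\el$ does not enter) is the clean way to close this, and is a point the paper glosses over.
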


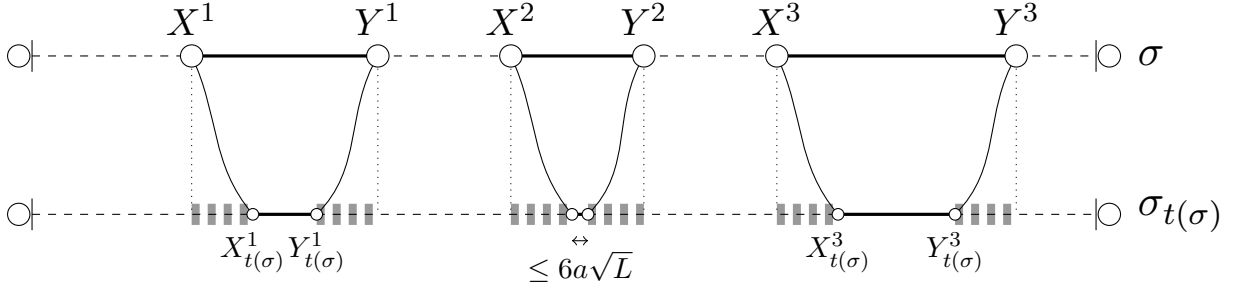
\begin{figure}[h]
{\centering

\begin{tikzpicture}[scale=0.7]


\draw[-, very thick] (3,3)--(6.5,3);
\draw[-, very thick] (9,3)--(11.5,3);
\draw[-, very thick] (14,3)--(18.5,3);

\draw[dashed] (0,3)--(3,3);
\draw[dashed] (6.5,3)--(9,3);
\draw[dashed] (11.5,3)--(14,3);
\draw[dashed] (18.5,3)--(20,3);


\draw[-, very thick] (4.15,0)--(5.35,0); 
\draw[-, very thick] (10.15,0)--(10.45,0); 
\draw[-, very thick] (15.15,0)--(17.35,0);

\draw[dashed] (0,0)--(4.15,0);
\draw[dashed] (5.35,0)--(10.15,0);
\draw[dashed] (10.45,0)--(15.15,0);
\draw[dashed] (17.35,0)--(20,0);


\draw (20.5,3) node[right,xshift=0cm, scale=1.5]{$\sigma$};
\draw (20.5,0) node[right,xshift=0cm, scale=1.5]{$\sigma_{t(\sigma)}$};


\draw[dotted] (3,3)--(3,0);
\draw[dotted] (6.5,3)--(6.5,0);
\draw[dotted] (9,3)--(9,0);
\draw[dotted] (11.5,3)--(11.5,0);
\draw[dotted] (14,3)--(14,0);
\draw[dotted] (18.5,3)--(18.5,0);

\draw[line width=8pt,dashed,opacity=0.4] (3,0)--(4.15,0);
\draw[line width=8pt,dashed,opacity=0.4] (5.35,0)--(6.5,0);
\draw[line width=8pt,dashed,opacity=0.4] (9,0)--(10.15,0);
\draw[line width=8pt,dashed,opacity=0.4] (10.45,0)--(11.5,0);
\draw[line width=8pt,dashed,opacity=0.4] (14,0)--(15.15,0);
\draw[line width=8pt,dashed,opacity=0.4] (17.35,0)--(18.5,0);


 \draw[-] (3,3) to[out=-65,in=130] (4.15,0); 
 \draw[-] (6.5,3) to[out=240,in=45] (5.35,0);
 
 \draw[-] (9,3) to[out=-65,in=130] (10.15,0); 
 \draw[-] (11.5,3) to[out=240,in=45] (10.45,0);
 
 \draw[-] (14,3) to[out=-65,in=130] (15.15,0); 
 \draw[-] (18.5,3) to[out=240,in=45] (17.35,0);


 \draw[thick] (3,3) circle(0.2) node[above,yshift=0.1cm,scale=1.3]{$X^{1}$}; 
 \fill[white] (3,3) circle(0.2);
 
 \draw[thick] (6.5,3) circle(0.2) node[above,yshift=0.1cm,scale=1.3]{$Y^{1}$}; 
 \fill[white] (6.5,3) circle(0.2);
 
 \draw[thick] (9,3) circle(0.2)node[above,yshift=0.1cm,scale=1.3]{$X^2$};
 \fill[white] (9,3) circle(0.2);
 
 \draw[thick] (11.5,3) circle(0.2) node[above,yshift=0.1cm,scale=1.3]{$Y^2$};
 \fill[white] (11.5,3) circle(0.2); 
 
 \draw[thick] (14,3) circle(0.2) node[above,yshift=0.1cm,scale=1.3]{$X^{3}$};
 \fill[white] (14,3) circle(0.2); 
 
 \draw[thick] (18.5,3) circle(0.2) node[above,yshift=0.1cm,scale=1.3]{$Y^{3}$};
 \fill[white] (18.5,3) circle(0.2);
 

 \draw[thick] (4.15,0) circle(0.1) node[below,yshift=-0.1cm,scale=1]{$X^{1}_{t(\sigma)}$}; 
 \fill[white] (4.15,0) circle(0.1);
 
 \draw[thick] (5.35,0) circle(0.1) node[below,yshift=-0.1cm,scale=1]{$Y^{1}_{t(\sigma)}$}; 
 \fill[white] (5.35,0) circle(0.1);
 
 \draw[thick] (10.15,0) circle(0.1);
 \fill[white] (10.15,0) circle(0.1);
 
 \draw[thick] (10.45,0) circle(0.1);
 \fill[white] (10.45,0) circle(0.1); 
 
 \draw[thick] (15.15,0) circle(0.1) node[below,yshift=-0.1cm,scale=1]{$X^{3}_{t(\sigma)}$};
 \fill[white] (15.15,0) circle(0.1); 
 
 \draw[thick] (17.35,0) circle(0.1) node[below,yshift=-0.1cm,scale=1]{$Y^{3}_{t(\sigma)}$};
 \fill[white] (17.35,0) circle(0.1);
 
\draw[<->] (10.15,-0.5)--(10.45,-0.5) node[below,midway]{$\le 6a\sqrt{L}$};


\draw[-] (0,0.3)--(0,-0.3);
\draw[-] (0,3.3)--(0,2.7);
\draw[-] (20,0.3)--(20,-0.3);
\draw[-] (20,3.3)--(20,2.7);

 \draw[thick] (-0.25,0) circle(0.2); 
 \fill[white] (-0.25,0) circle(0.2);

 \draw[thick] (-0.25,3) circle(0.2); 
 \fill[white] (-0.25,3) circle(0.2);
  
 \draw[thick] (20.25,0) circle(0.2); 
 \fill[white] (20.25,0) circle(0.2);
 
 \draw[thick] (20.25,3) circle(0.2); 
 \fill[white] (20.25,3) circle(0.2); 

\end{tikzpicture}\par}
\caption{The expected behaviour of the fronts during time $t(\s)$. Dashed zones are in $\h(.,d)$. We see that at time $t(\s)$, the configuration lost at least one particle cluster.}
\label{fig:lemma2}
\end{figure}

\begin{proof}
Given the event $\mathcal{R}_{t(\s)}$, we can use the previous coupling argument to show that:
\begin{multline}
\P_\s \left( X^{k}_{t(\s)} \in I_k, \ Y^{k}_{t(\s)} \in J_k, \ \mathcal{R}_{t(\s)} \ \textup{and} \ \s_{t(\s)} \notin \left( \h(X^k,X^k_{t(\s)},d)\cap \h(Y^k_{t(\s)},Y^k,d) \right) \right) = \\ 
\P_{\hat{\s}^k_0} \left( \hat{X}^k_t \in -I_k, \ \hat{\s}^k_{t(\s)} \notin \h(\hat{X}_t^k, \hat{X}^k_0, d), \hat{\mathcal{R}}_{t(\s)} \right) \\
\P_{\check{\s}^k_0} \left(  \check{Y}^k_t \in \theta_{-L}(J_k), \ \check{\s}^k_{t(\s)} \notin \h(\check{Y}^k_t, \check{Y}_0^k, d), \check{\mathcal{R}} _{t(\s)} \right).
\end{multline}

By Lemma \ref{zeros2} (with $y = \hat{X}_0^k$), we have that:
$$\P_{\hat{\s}^k_0} \left( \hat{\s}^k_{t(\s)} \notin \h(\hat{X}_t^k, \hat{X}^k_0,d) \right) \le C L^{5/2}\exp(-c \sqrt{L}).$$
We can get rid of the $L^{5/2}$ factor by taking $c$ smaller. The same inequality holds for $\check{\s}^k$ and therefore we get the claimed result.
\end{proof}

Now, we show that if an interval already contains enough zeros, it most likely will also at time $t(\s)$. This will ensure that once a cluster has disappeared and has left a lot of empty sites, the region will keep as many empty sites during any other iteration of $F$. It is actually a very general result that only relies on Lemma \ref{lem:contact}.

\begin{lem}\label{lem:goodstaysgood}
Let $\s \in \Omega_\Lambda$, $x,y \in \Lambda$, $d >0$ such that $\s \in \h(x,y,d)$. Then:
\begin{equation}
\P_\s( F(\s) \notin \h(x,y,d)) \le CL \exp(-c\sqrt{L}),
\end{equation}  
with $c,C$ independent of $d,x,y,\s$.
\end{lem}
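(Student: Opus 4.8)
The plan is to reduce the statement to a direct application of the Zeros Lemma \ref{lem:contact}, exactly in the spirit of the computation at the beginning of the proof of Proposition \ref{relaxationfinie2}. The key point is that $F(\s) = \s_{t(\s)}$ where, by Corollary \ref{vitessemax} applied to the deterministic bound $t(\s) = \frac{\el(\s)}{2v} - \frac{2a}{v}\sqrt{\el(\s)} \le \frac{\el(\s)}{2v} \le \frac{L}{2v}$, we have $t(\s) \le CL$ for a constant $C$ depending only on $q$ (and if $p(\s) = 0$ then $t(\s) = 0$ and the statement is trivial since $\s \in \h(x,y,d)$ already). So it suffices to bound $\P_\s(\s_t \notin \h(x,y,d))$ uniformly for $t \le CL$.

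First I would cover the interval $[x,y]$ by a chain of empty sites of $\s$. Since $\s \in \h(x,y,d)$, there exist sites $x = z_0 < z_1 < \dots < z_p$ with $\s(z_i) = 0$ for all $i$, with $z_{i+1} - z_i \le d$, $y - z_p \le d$, and $p \le \lceil 2L/d \rceil \le 2L$ (using $d \ge 1$, say; in our application $d = 6a\sqrt L$). Then I would apply Lemma \ref{lem:contact} at each site $z_i$: for each $i$,
\begin{equation*}
\P_\s\!\left(\s_t \notin \h\big((z_i - \underline v t)\vee 0,\,(z_i + \underline v t)\wedge(L+1),\,d/2\big)\right) \le c_1 t\, e^{-c_2(t \wedge (d/2))}.
\end{equation*}
On the complement of the union of these bad events, the intervals $[z_i - \underline v t, z_i + \underline v t]$ around consecutive empty sites $z_i, z_{i+1}$ each contain enough empty sites at resolution $d/2$; since $z_{i+1} - z_i \le d$, these intervals overlap (each has radius $\underline v t$, and as soon as $t \ge d/(2\underline v)$ we even have $2\underline v t \ge d$ so consecutive ones literally overlap — and if $t < d/(2\underline v)$ the argument is even easier as the whole configuration stays close to the initial one; one may also just split into these two cases). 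Patching the overlapping good intervals gives $\s_t \in \h(x,y,d)$. A union bound over $i \le 2L$ then yields
\begin{equation*}
\P_\s(\s_t \notin \h(x,y,d)) \le 2L \cdot c_1 t\, e^{-c_2(t \wedge (d/2))} \le C L \cdot L \cdot e^{-c\sqrt L} \le C' L\, e^{-c'\sqrt L},
\end{equation*}
absorbing the polynomial factor $L \cdot t \le CL^2$ into the exponential by slightly shrinking the constant $c'$, and using $t \wedge (d/2) \ge c\sqrt L$ in the regime where $t \ge d/2 = 3a\sqrt L$ (and when $t < d/2$, $t \wedge (d/2) = t$, but then $t$ itself must be compared to $\sqrt L$; this is handled by noting that if $t$ is small the bound $\P(\s_t \notin \h(x,y,d)) \le c_1 t e^{-c_2 t}$ combined with the fact that we only care about $t$ up to $CL$ still works, or simply that $\el(\s) \ge d$ forces $t(\s) \ge \frac{d}{2v} - \frac{2a}{v}\sqrt{d} \ge c\sqrt L$ for $L$ large, so this small-$t$ case does not actually occur).

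There is essentially no serious obstacle here; this is the easiest of the four lemmas. The only mild subtlety is bookkeeping the two regimes (whether $t(\s) \ge d/2$ or not) and making sure the polynomial prefactors $L \cdot t$ get absorbed into $e^{-c\sqrt L}$ — which is fine since $t \le CL$ and $L^3 e^{-c\sqrt L} \to 0$. I would also remark that the constants produced by Lemma \ref{lem:contact} depend only on $q$, and the covering construction depends only on $d, x, y$ through the count $p \le 2L$, so the final constants $c, C$ are indeed independent of $d, x, y, \s$ as claimed.
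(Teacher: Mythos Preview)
Your proposal is correct and follows exactly the paper's approach: the paper's proof is the one-liner ``We simply use the comparison with a supercritical contact process (Lemma~\ref{lem:contact}) for every initial zero in $\restriction{\s}{(x,y)}$ with $\el = d/2$,'' which is precisely your covering-by-initial-zeros plus union bound, with the polynomial prefactors absorbed into the exponential. Your write-up is in fact more careful than the paper's about the overlap of the intervals $[z_i-\underline{v}t,\,z_i+\underline{v}t]$ and the regime $t(\s)\gtrless d/(2\underline{v})$; the paper leaves this implicit.
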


\begin{proof}
We simply use the comparison with a supercritical contact process (Lemma \ref{lem:contact}) for every initial zero in $\restriction{\s}{(x,y)}$ with $\el = d/2$.
\end{proof}

So far, we have proved that with high probability after a time $t(\s)$, every front moves by a distance $\sim \dfrac{\el(\s)}{2v}$ (Lemma \ref{lem:TCL}), while creating zeros behind it (Lemma \ref{lem:zerosbehind}). This results in one box $k_0$ initially "blocked" ($Y^{k_0}-X^{k_0} > d$) to become filled with zeros. Meanwhile, every box $\Lambda_k$ that satisfies $\s \in \h(\Lambda_k,d)$ stays in $\h(\Lambda_k,d)$ after a time $t(\s)$ (Lemma \ref{lem:goodstaysgood}). As illustrated in Figure \ref{fig:lemma3}, all of this combines into a nice result about $F(\s)$.

\begin{lem}\label{lem:pdecroissant}
For any $\s \in \mathcal{C}$ such that $p(\s) \ge 1$,
$$\P\left(F(\s) \in \mathcal{C} \ \text{and} \ p(F(\s)) < p(\s)\right) \ge 1 - r\varepsilon - r \phi_2(L)$$
with $\phi_2(L)$ independent of $\s$ (but depending on $r$) and $\underset{L \rightarrow \infty}{\lim} \phi_2(L) = 0$.
\end{lem}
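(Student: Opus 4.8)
The plan is to combine the three preceding lemmas via a union bound over the $r$ boxes $\Lambda_1,\dots,\Lambda_r$. Fix $\s \in \mathcal{C}$ with $p(\s) \ge 1$, let $\el = \el(\s)$, $t = t(\s)$, and pick an index $k_0 \in \kappa$ realizing the minimum $Y^{k_0}-X^{k_0} = \el$. We work on the intersection of the following three events: (i) the localization event from Lemma \ref{lem:TCL}, namely that $X^k_t \in I_k$ and $Y^k_t \in J_k$ for all $k\in\kappa$, together with $\mathcal{R}_t$; (ii) for every $k\in\kappa$, the event $\s_t \in \h(X^k,X^k_t,d)\cap\h(Y^k_t,Y^k,d)$ from Lemma \ref{lem:zerosbehind}; (iii) for every $k\notin\kappa$ (i.e.\ every box already in $\h(\Lambda_k,d)$), the event $\s_t = F(\s) \in \h(\Lambda_k,d)$ from Lemma \ref{lem:goodstaysgood}. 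By Lemma \ref{lem:TCL}, event (i) has probability at least $1-r\varepsilon + r\phi(L)$; by Lemma \ref{lem:zerosbehind} applied to each $k\in\kappa$ and a union bound, the complement of (ii) intersected with (i) has probability at most $Cr\exp(-c\sqrt L)$; by Lemma \ref{lem:goodstaysgood} applied to each $k\notin\kappa$ and a union bound, the complement of (iii) has probability at most $CrL\exp(-c\sqrt L)$. Hence the intersection of (i), (ii), (iii) has probability at least $1 - r\varepsilon - r\phi_2(L)$ for a suitable $\phi_2(L) \to 0$ absorbing $\phi(L)$ and the exponentially small terms (note $L\exp(-c\sqrt L)\to 0$).

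It remains to check that on this intersection, $F(\s) \in \mathcal{C}$ and $p(F(\s)) < p(\s)$. For each $k\in\kappa$: on event (i) the fronts have not met (by $\mathcal{R}_t$), so in $F(\s)$ the rightmost zero of $[0,M^k]$ is $X^k_t \in I_k$ and the leftmost zero of $[M^k,L+1]$ is $Y^k_t \in J_k$, with $\restriction{F(\s)}{(X^k_t,Y^k_t)}\equiv 1$; moreover on (ii), $F(\s) \in \h(X^k,X^k_t,d)\cap\h(Y^k_t,Y^k,d)$, and since $\s\in\h(a_k,X^k,d)\cap\h(Y^k,b_k,d)$ is preserved in the regions not swept by the fronts (this follows again from Lemma \ref{lem:goodstaysgood} applied to $\h(a_k,X^k,d)$ and $\h(Y^k,b_k,d)$, which we fold into event (iii)), we get $F(\s)\in\h(a_k,X^k_t,d)\cap\h(Y^k_t,b_k,d)$; so the witnesses $X^k_t < Y^k_t$ verify the three defining conditions of $\mathcal{C}$ for box $k$. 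For $k\notin\kappa$, event (iii) directly gives $F(\s)\in\h(\Lambda_k,d)$, and we may pick any admissible pair of zeros as witnesses. Thus $F(\s)\in\mathcal{C}$. Finally, the conclusion of Lemma \ref{lem:TCL} for $k_0$ gives $0 \le Y^{k_0}_t - X^{k_0}_t \le d$, i.e.\ $F(\s)\in\h(\Lambda_{k_0},d)$, so $k_0 \notin \kappa(F(\s))$; and since all boxes in $\kappa(F(\s))$ must have been in $\kappa(\s)$ (a box in $\h(\Lambda_k,d)$ at time $0$ stays there by (iii)), we conclude $\kappa(F(\s)) \subset \kappa(\s)\setminus\{k_0\}$, hence $p(F(\s)) < p(\s)$.

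The main obstacle is bookkeeping rather than a deep estimate: one must be careful that the "front" processes $X^k_t, Y^k_t$ defined through midpoints $M^k$ genuinely coincide with the extremal zeros used in the definition of $\mathcal{C}$ — this is exactly why the event $\mathcal{R}_t$ (fronts do not reach the midpoint, hence do not meet) is built into Lemma \ref{lem:TCL} and must be carried through. A second delicate point is ensuring that the $\h(\cdot,d)$ structure on the untouched portions $[a_k,X^k]$ and $[Y^k,b_k]$ is preserved; this is handled by applying Lemma \ref{lem:goodstaysgood} to those sub-intervals as well, which is legitimate since that lemma is stated for arbitrary $x,y$ and $d$. Once these structural points are pinned down, the probability bound is just the three union bounds above, and the constant dependence on $r$ is exactly as claimed.
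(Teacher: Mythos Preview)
Your proposal is correct and follows exactly the approach the paper intends: the paper does not even write out a proof for this lemma, but merely states in the preceding paragraph that it is obtained by combining Lemmas \ref{lem:TCL}, \ref{lem:zerosbehind}, and \ref{lem:goodstaysgood} (fronts localize, leave zeros behind them, and existing $\h(\cdot,d)$ structure persists). Your union-bound bookkeeping and your explicit verification that $F(\s)\in\mathcal{C}$ with $k_0\notin\kappa(F(\s))$ are precisely the details the paper leaves to the reader, including the point you flag about also invoking Lemma \ref{lem:goodstaysgood} on the flanking intervals $[a_k,X^k]$ and $[Y^k,b_k]$.
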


\begin{figure}[h]
{\centering

\begin{tikzpicture}[scale=0.7]


\draw[-, very thick] (3.5,3)--(5.35,3);
\draw[-, very thick] (14.5,3)--(18.2,3);

\draw[dashed] (0,3)--(4.35,3);
\draw[dashed] (4.65,3)--(14.5,3);
\draw[dashed] (18.5,3)--(20,3);


\draw[-, very thick] (4.35,0)--(4.65,0); 
\draw[-, very thick] (15.15,0)--(17.35,0);

\draw[dashed] (0,0)--(4.35,0);
\draw[dashed] (4.65,0)--(15.15,0);
\draw[dashed] (17.35,0)--(20,0);


\draw (20.5,3) node[right,xshift=0cm, scale=1.5]{$\sigma$};
\draw (20.5,0) node[right,xshift=0cm, scale=1.5]{$\sigma_{t(\sigma)}$};

 
\draw[<->] (3,4)--(6.5,4) node[above,midway,scale=1.5]{$\Lambda_1$};  
\draw[<->] (9,4)--(11.5,4) node[above,midway,scale=1.5]{$\Lambda_2$}; 
\draw[<->] (13.5,4)--(19,4) node[above,midway,scale=1.5]{$\Lambda_3$}; 
 
\draw[-] (3,0.3)--(3,-0.3);
\draw[-] (3,3.3)--(3,2.7);
\draw[-] (6.5,0.3)--(6.5,-0.3);
\draw[-] (6.5,3.3)--(6.5,2.7);

\draw[-] (9,0.3)--(9,-0.3);
\draw[-] (9,3.3)--(9,2.7);
\draw[-] (11.5,0.3)--(11.5,-0.3);
\draw[-] (11.5,3.3)--(11.5,2.7);

\draw[-] (13.5,0.3)--(13.5,-0.3);
\draw[-] (13.5,3.3)--(13.5,2.7);
\draw[-] (19,0.3)--(19,-0.3);
\draw[-] (19,3.3)--(19,2.7);


\draw[line width=8pt,dashed,opacity=0.4] (3,3)--(3.5,3);
\draw[line width=8pt,dashed,opacity=0.4] (5.35,3)--(6.5,3);

\draw[line width=8pt,dashed,opacity=0.4] (9,3)--(11.5,3);

\draw[line width=8pt,dashed,opacity=0.4] (13.5,3)--(14.5,3);
\draw[line width=8pt,dashed,opacity=0.4] (18.2,3)--(19,3);


\draw[dotted] (3,3)--(3,0);
\draw[dotted] (6.5,3)--(6.5,0);
\draw[dotted] (9,3)--(9,0);
\draw[dotted] (11.5,3)--(11.5,0);
\draw[dotted] (13.5,3)--(13.5,0);
\draw[dotted] (19,3)--(19,0);

\draw[line width=8pt,dashed,opacity=0.4] (3,0)--(4.35,0);
\draw[line width=8pt,dashed,opacity=0.4] (4.65,0)--(6.5,0);

\draw[line width=8pt,dashed,opacity=0.4] (9,0)--(11.5,0);

\draw[line width=8pt,dashed,opacity=0.4] (13.5,0)--(15.35,0);
\draw[line width=8pt,dashed,opacity=0.4] (17.45,0)--(19,0);


 \draw[-] (3.5,3) to[out=-65,in=130] (4.35,0); 
 \draw[-] (5.5,3) to[out=240,in=45] (4.65,0);
 
 \draw[-] (14.5,3) to[out=-65,in=130] (15.35,0); 
 \draw[-] (18.3,3) to[out=240,in=45] (17.45,0);


 \draw[thick] (3.5,3) circle(0.2) node[above,yshift=0.1cm,scale=1]{$X^{1}$}; 
 \fill[white] (3.5,3) circle(0.2);
 
 \draw[thick] (5.5,3) circle(0.2) node[above,yshift=0.1cm,scale=1]{$Y^{1}$}; 
 \fill[white] (5.5,3) circle(0.2);
 
 \draw[thick] (14.5,3) circle(0.2) node[above,yshift=0.1cm,scale=1]{$X^{3}$};
 \fill[white] (14.5,3) circle(0.2); 
 
 \draw[thick] (18.3,3) circle(0.2) node[above,yshift=0.1cm,scale=1]{$Y^{3}$};
 \fill[white] (18.3,3) circle(0.2);
 

 \draw[thick] (4.35,0) circle(0.1); 
 \fill[white] (4.35,0) circle(0.1);
 
 \draw[thick] (4.65,0) circle(0.1); 
 \fill[white] (4.65,0) circle(0.1);
 
 \draw[thick] (15.35,0) circle(0.1) node[below,yshift=-0.1cm,scale=1]{$X^{3}_{t(\sigma)}$};
 \fill[white] (15.35,0) circle(0.1); 
 
 \draw[thick] (17.45,0) circle(0.1) node[below,yshift=-0.1cm,scale=1]{$Y^{3}_{t(\sigma)}$};
 \fill[white] (17.45,0) circle(0.1);
 
\draw[<->] (4.35,-0.5)--(4.65,-0.5) node[below,midway]{$\le d$};


\draw[-] (0,0.3)--(0,-0.3);
\draw[-] (0,3.3)--(0,2.7);
\draw[-] (20,0.3)--(20,-0.3);
\draw[-] (20,3.3)--(20,2.7);

 \draw[thick] (-0.25,0) circle(0.2); 
 \fill[white] (-0.25,0) circle(0.2);

 \draw[thick] (-0.25,3) circle(0.2); 
 \fill[white] (-0.25,3) circle(0.2);
  
 \draw[thick] (20.25,0) circle(0.2); 
 \fill[white] (20.25,0) circle(0.2);
 
 \draw[thick] (20.25,3) circle(0.2); 
 \fill[white] (20.25,3) circle(0.2); 

\end{tikzpicture}\par}
\caption{The expected behaviour of the whole configuration during time $t(\s)$. Here, the interval $\Lambda_2$ kept its zeros and the particle cluster in $\Lambda_1$ shrunk to a size $\le d$.}
\label{fig:lemma3}
\end{figure}
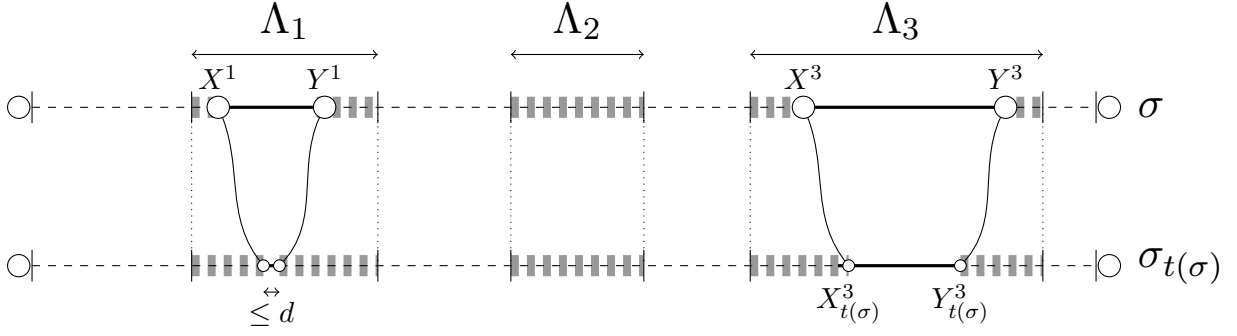

Before getting to the proof of Proposition \ref{prop:cutofffull}, let us make a final remark. On the event appearing in Lemma \ref{lem:TCL}, we can bound the size of every particle cluster in $F(\s)$:
\begin{equation}\label{eq:shrinkingclusters}
\forall k\in \kappa, \ Y^k_{t(\s)} - X^k_{t(\s)} \le Y^k - X^k - \el(\s) + 2a\sqrt{\el}
\end{equation}
Note that it is possible that in one iteration of $F$, two or more particle clusters go from a size $>d$ to a size $\le d$.

Let us now prove Proposition \ref{prop:cutofffull}.
\begin{proof}[Proof of Proposition \ref{prop:cutofffull}]
Fix $\delta >0$, $\s_0 \in (\Omega^\delta)^c$ and $\varepsilon >0$. We also define $a$ as in Lemma \ref{lem:TCL} and $d = 6a \sqrt{L}$. Throughout the proof, $\phi(L)$ will denote a function that may depend on $\delta$ but not on $\s$ such that $\underset{L \rightarrow \infty}{\lim} \phi(L) = 0$. Let $\Lambda_1,...\Lambda_r$ be the particle clusters of $\s_0$ of size $\ge \displaystyle \underline{v}\dfrac{1}{v}B(\s_0)$ (with their neighboring sites that are empty in $\s$). More precisely, the intervals $\Lambda_k = [a_k,b_k]$ are such that $b_k-a_k \ge \dfrac{\underline{v}}{v}B(\s_0)$, $\s_0(a_k)=\s_0(b_k)=0$ and $\restriction{\s_0}{(a_k,b_k)} \equiv 1$. Defining the $\Lambda_k$ defines the class $\mathcal{C}$ and we have of course $\s_0 \in \mathcal{C}$. Note already that even though $r$ now depends on $\s$, it can simply be bounded by $\dfrac{v}{\underline{v}\delta}$, which only depends on $\delta$. We aim to prove that after a time $\tau_1 = \dfrac{B(\s_0)}{2v} + 3ar\sqrt{\el(\s_0)}$, the configuration is in $\h(\Lambda,d)$. To do that, we first focus on the particle clusters.\\
We define now the iterations of $F$ starting from $\s_0$. For all $1 \le k \le r, \ \s^{(i)} := F(\s^{(i-1)})$ with $\s^{(0)}:=\s_0$. If we ever have $\s^{(i)} \notin \mathcal{C}$, then set for example $F(\s^{(i+1)}) = \s^{(i)}$ and $t(\s^{(i)}) = 0$. We will not encounter this case later on.\\
By Lemma \ref{lem:pdecroissant}, we have by induction that 
\begin{equation*}
\P\left( p(\s^{(r)}) = 0 \right) \ge 1 - r^2 \varepsilon + \phi(L).
\end{equation*}
Note that $p(\s^{(r)})=0$ simply means that $\s^{(r)} \in \bigcap_{k=1}^r \h(\Lambda_k,d)$.\\
Let $T = \sum_{i=0}^{r-1} t(\s^{(i)})$, that is the time at which we reach the configuration $\s^{(r)}$. Our goal now is to bound $T$ from above. To do that, we use Equation \eqref{eq:shrinkingclusters} and by induction we get 
\begin{equation*}
\P\left( T \le \dfrac{B(\s_0)}{2v} + 2ar\sqrt{\el(\s_0)} \right) \ge 1 - r^2\varepsilon + \phi(L).
\end{equation*}
Now using the exact same argument as in Lemma \ref{lem:goodstaysgood}, as long as $T \le \dfrac{B(\s_0)}{2v} + 2ar\sqrt{\el(\s_0)}$ and $\s_T \in  \bigcap_{k=1}^r \h(\Lambda_k,d)$, then with high probability these zeros will remain at time $\tau_1 = \dfrac{B(\s_0)}{2v} + 3ar\sqrt{\el(\s_0)}$. This leads to the result we were looking for, that at time $\tau_1$, with high probability, every initial particle cluster contains a lot of zeros:
\begin{equation}\label{eq:clustersgood}
\P\left(\s_{\tau_1} \in  \bigcap_{k=1}^r \h(\Lambda_k,d) \right) \ge 1 - r^2\varepsilon + \phi(L).
\end{equation}

We can now handle the rest of $\Lambda$. Initially, we have zeros in $\Lambda \setminus \bigcup_{k=1}^r \Lambda_k$ that are spaced at most $\dfrac{\underline{v}}{v}B(\s_0)$ apart. This threshold was precisely chosen so that we can again use the supercritical contact process to ensure that at time $t_0 = \dfrac{B(\s_0)}{2v}$, we have:
\begin{equation*}
\P_{\s_0} \left( \s_{t_0} \notin \h(\Lambda \setminus \bigcup_{k=1}^r \Lambda_k,d) \right) \le C r t_0 e^{-c\sqrt{L}} .
\end{equation*}
Again, since $t_0 \le \tau_1$, this property persists at time $t_1$. Putting this together with equation \eqref{eq:clustersgood} finally gives:
\begin{equation}\label{eq:everythinggood}
\underset{L \rightarrow \infty}{\liminf} \ \P\left(\s_{\tau_1} \in \h(\Lambda,d) \right) \ge 1 - r^2\varepsilon.
\end{equation}
Here, we used the fact that $r$ is bounded (with $L$). More precisely, we have $r \le \dfrac{v L}{\underline{v}B(\s_0)} \le \dfrac{v}{\underline{v}\delta}$.

From there, it only remains to apply Proposition \ref{relaxationfinie2} with $\el = d$ and $\beta = 1/4$ to find that for any local function $f$ such that $\mu(f) = 0$, at time $t:= \tau_1 + \dfrac{6a\sqrt{L}}{2\underline{v}} + (6a\sqrt{L})^{1/4}$,

$$\underset{L \rightarrow \infty}{\limsup} \left|\E[f(\s_t)]\right| \le r^2\varepsilon.$$

Finally, we can bound this time replacing $L$ with $B(\s_0)/\delta$ to get a consistent expression:
$$t \le \dfrac{B(\s_0)}{2v}+\dfrac{\alpha \sqrt{B(\s_0)}}{\sqrt{\delta}},$$
with $\alpha$ depending only on $\varepsilon$ and $q$.
The fact that all the generic constants $c,C$ used in the proof do not depend on $\s_0$ and that the CLT used is uniform in the initial configuration guarantees that the bound above is also uniform in $\s_0$.
\end{proof}

\subsection{Conclusion}

We can now prove Theorem \ref{mainthm}.
\begin{proof}[Proof of Theorem \ref{mainthm}]

First, we proved Equation \eqref{eq:mainthm2} with Proposition \ref{prop:cutofffull} in the previous subsection. Next, let $\delta >0$, $\s \in \Omega_L^\delta$ and $$t'_1(\s) = \left( \dfrac{B(\s)}{2\underline{v}} + (B(\s))^{1/4} \right) \vee \left( \dfrac{(\log L)^9}{2\underline{v}} + (\log L)^{9/4} \right).$$\\
If $B(\s) \le (\log L)^9$, then we use the fact than $\s \in \h(\Lambda,(\log L)^9)$ to use Proposition \ref{relaxationfinie2} with $\el = (\log L)^9$ and $\beta = 1/4$. This way, we get that for any local function $f$ with support in $\Lambda$:
$$|\E_\s[f(\s_{\frac{(\log L)^9}{2\underline{v}} + (\log L)^{9/4}})]| \le C ||f||_\infty L e^{-c(\log L)^{9/8}},$$
which goes to zero as $L \rightarrow \infty$ uniformly in $\s$.\\
In this case, $t'_1(\s) = \left( \dfrac{(\log L)^9}{2\underline{v}} + (\log L)^{9/4} \right) \le t_1(\s)$ so we get our result.\\
If $B(\s) > (\log L)^9$, then by using Proposition \ref{relaxationfinie2} with $\el=B(\s)$ and $\beta = 1/4$, we have:
$$|\E_\s[f(\s_{\frac{B(\s)}{2\underline{v}} + B(\s)^{1/4}})]| \le C ||f||_\infty L e^{-cB(\s)^{1/8}} \le C ||f||_\infty L e^{-c(\log L)^{9/8}},$$
which again leads to our result.\\

Finally, let $\Phi : \N \rightarrow \N$ such that $\Phi(L) \underset{L \rightarrow +\infty}{\longrightarrow} +\infty$. Let $\s \in \h(0,L+1,\Phi(L))^c$.
Let $\Lambda_1$ be the largest particle cluster in $\s$:\\
$\Lambda_1 = [X+1,Y-1]$ with $\s(X)=\s(Y)=0$, $\restriction{\s}{[X+1,Y-1]} \equiv 1$ and $Y-X \ge \Phi(L)$. Let $M = (X+Y)/2$.
By Lemma \ref{lem:TCL}, we see that with $t = \dfrac{B(\s)}{2v} - \dfrac{2a}{v}\sqrt{B(\s)},$
\begin{multline}\label{eq:proofeq3} \P_{\s} \left( X_t \in [M - 3a \sqrt{B(\s)}, M - a\sqrt{B(\s)}],\right. \\ \left. Y_t \in [M + a \sqrt{B(\s)}, M + 3a\sqrt{B(\s)}] \right) \ge 1 - \varepsilon + \phi(L) - Ce^{-c \sqrt{B(\s)}},
\end{multline}
with $\phi(L)$ going to $0$ when $L \rightarrow +\infty$ uniformly in $\s$. Here, $r = 1$, we only have one particle cluster.
This event implies that $\restriction{\s_t}{[M - a\sqrt{B(\s)}, M + a\sqrt{B(\s)}]} \equiv 1$. Now, note that:
\begin{equation} \label{eq:proofeq3b}
\mu(\one_{\restriction{\s_t}{[M - a\sqrt{B(\s)}, M + a\sqrt{B(\s)}]} \equiv 1}) = (1-q)^{2a\sqrt{B(\s)}} \le (1-q)^{2a \Phi(L)},
\end{equation}
which goes to zero when $L \rightarrow \infty$.\\
Putting together Equations \eqref{eq:proofeq3} and \eqref{eq:proofeq3b} concludes the proof of Equation \eqref{eq:mainthm3} and thus the proof of Theorem \ref{mainthm}.
\end{proof}

Now we can conclude the proof of the cutoff.

\begin{proof}[Proof of Theorem \ref{thm:cutoff}]
The proof of the lower bound is straightforward by taking initial configuration $\oneL$ and applying Equation \eqref{eq:mainthm3}.\\

Let $\delta = \dfrac{\underline{v}}{v}$ and $\varepsilon >0$. For every $\s \in \Omega^\delta_\Lambda$, we have that $t_1(\s) \le \dfrac{\delta L}{2\underline{v}} + (\delta L)^{1/4} = \dfrac{L}{2v} + (\delta L)^{1/4}$. If $\s \in (\Omega^\delta_\Lambda)^c$, then $t_2(\s) \le \dfrac{L}{2v} + 8a \sqrt{L}$. So in every case, for $L$ large enough, if $t = \dfrac{L}{2v} + 8a \sqrt{L}$, we have:
$$d(t) \le \varepsilon.$$
\end{proof}


\begin{appendices}
\section{Convergence behind the front}\label{sec:app}

Before proving Theorem \ref{ergo}, we recall a result that deals with the relaxation for the FA-1f process far from the front.

\begin{prop}[{\cite[Theorem 5.1]{BDT19}}]\label{relaxation}
Let $q > \bar{q}$, $\alpha < 1/2$ and $\delta > 0$. There exists $c >0$ such that for all $t \ge 0$, for any $M \le e^{\delta t^\alpha}$, any $f$ with support in $[0,M]$, such that $\mu(f)=0$ and $||f||_\infty \le 1$, for all $\s_0 \in LO$ such that $\tilde{\s}_0 \in \h(\underline{v}t,M+(4\bar{v}-\underline{v})t,\sqrt{t})$,  then 
$$| \E\left[ f(\theta_{3\bar{v}t}\tilde{\s}_t) \right] | \le e^{-c\sqrt{t}}.$$ 
\end{prop}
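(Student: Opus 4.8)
The plan is to reduce the statement to the finite–volume relaxation estimate of Proposition \ref{relaxationfinie} by excising a window of length $O(\bar{v}t)+M$ around the region probed by $f$ and invoking finite speed of propagation; the peculiar constants $3\bar{v}$ and $4\bar{v}-\underline{v}$ appearing in the statement are exactly what is needed to make the various windows nest, and the condition $\tilde{\s}_0\in\h(\underline{v}t,M+(4\bar{v}-\underline{v})t,\sqrt t)$ is exactly what guarantees that the excised configuration is a good initial datum for Proposition \ref{relaxationfinie}. By translation invariance I may assume the front of $\s_0$ sits at the origin, so $\tilde{\s}_0=\s_0$ and $\s_0\in\h(\underline{v}t,M+(4\bar{v}-\underline{v})t,\sqrt t)$.

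First I would set up the geometry. Enlarging $\bar{v}$ in Proposition \ref{eventF} if necessary so that $\bar{v}>\underline{v}$, introduce the deterministic window $W:=[\bar{v}t,(4\bar{v}-\underline{v})t+M]$, the deterministic region $\mathcal{T}:=[2\bar{v}t,(3\bar{v}-\underline{v})t+M]$, and the front event $G_1:=\{X_t\in[-\bar{v}t,-\underline{v}t]\}$. By Corollary \ref{vitessemax} (with $c=\bar{v}$) together with Lemma \ref{vitessemin}, $\P(G_1^c)\le Ae^{-ct}$. On $G_1$ the portion $T_t:=[X_t+3\bar{v}t,X_t+3\bar{v}t+M]$ of the line actually probed by $f(\theta_{3\bar{v}t}\tilde{\s}_t)$ satisfies $T_t\subset\mathcal{T}$, and $\mathcal{T}$ lies at distance exactly $\bar{v}t$ from $\Z\setminus W$. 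Moreover $W\subset[\underline{v}t,(4\bar{v}-\underline{v})t+M]$ with left slack $\bar{v}t-\underline{v}t>\sqrt t$ for $t$ large, so the hypothesis forces $\restriction{\s_0}{W}\in\h(0,|W|+1,\sqrt t)$ as an initial condition for the finite–volume FA-1f process on $W$; finally $|W|=(3\bar{v}-\underline{v})t+M\le e^{\delta t^\alpha}+Ct\le e^{t^{\alpha'}}$ for any fixed $\alpha'\in(\alpha,1/2)$ and $t$ large.

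Next I would run the finite–volume FA-1f process $\hat{\s}$ on $W$ with zero boundary conditions, started from $\restriction{\s_0}{W}$ and coupled to $\s$ through the same graphical rings. Finite speed of propagation (Proposition \ref{eventF}) applied to every site of $\mathcal{T}$ and to the two boundary pieces of $W$ gives $\P(G_2^c)\le (M+2)e^{-\bar{v}t}$ for $G_2:=\{\restriction{\s_t}{\mathcal{T}}\equiv\restriction{\hat{\s}_t}{\mathcal{T}}\}$. Proposition \ref{relaxationfinie} applied to $\hat{\s}$ (with ``$L$''$=|W|$, evolution time $t$, parameter $\alpha'$) then yields $|\E[g(\hat{\s}_t)]|\le \tfrac1{c'}\|g\|_\infty e^{-c'\sqrt t}$ for every $g$ supported in $W$ with $\mu(g)=0$. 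Now condition on the value $x$ of $X_t$ (at most $(\bar{v}-\underline{v})t+1$ values on $G_1$): on $G_1\cap\{X_t=x\}\cap G_2$ one has $f(\theta_{3\bar{v}t}\tilde{\s}_t)=g_x(\hat{\s}_t)$ with $g_x$ a translate of $f$ supported in $W$, with $\|g_x\|_\infty\le1$ and $\mu(g_x)=\mu(f)=0$. A second use of finite speed shows that $\restriction{\hat{\s}_t}{\mathcal{T}}$ depends only on rings in $W\times[0,t]$, whereas $\mathbf{1}_{G_1}\mathbf{1}_{\{X_t=x\}}$ depends only on rings in $(-\infty,(\bar{v}-\underline{v})t]\times[0,t]$; these regions are disjoint, hence these quantities are independent and $\E[g_x(\hat{\s}_t)\mathbf{1}_{G_1}\mathbf{1}_{\{X_t=x\}}]=\E[g_x(\hat{\s}_t)]\,\P(G_1\cap\{X_t=x\})$ exactly. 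Summing over $x$, re-inserting $G_2$ at the cost of $\P(G_2^c)$ and using the crude bound $\|f\|_\infty\le1$ on $G_1^c\cup G_2^c$, and noting that $M=e^{o(\sqrt t)}$ absorbs the polynomial-in-$t$ and factor-$M$ losses, I obtain $|\E[f(\theta_{3\bar{v}t}\tilde{\s}_t)]|\le Ce^{-c\sqrt t}$; shrinking $c$ to also cover bounded $t$ (where the left-hand side is trivially $\le1$) gives the claimed bound $e^{-c\sqrt t}$.

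The main obstacle is not any single estimate but the geometric bookkeeping: one must pick $W$, $\mathcal{T}$ and the shift $3\bar{v}t$ so that \emph{simultaneously} $\mathcal{T}$ is $\bar{v}t$-interior to $W$, $W$ lies inside the $\h$-region furnished by the hypothesis with enough slack that $\restriction{\s_0}{W}$ genuinely meets the $\h(0,|W|+1,\sqrt t)$ requirement of Proposition \ref{relaxationfinie} (including at the artificial boundary), and the light cone of the front region is disjoint from that of $\mathcal{T}$ — and it is precisely these requirements that force the constants $\underline{v},\bar{v},3\bar{v},4\bar{v}-\underline{v}$. A secondary, routine, point is checking that the $M$-dependent prefactors produced by the two union bounds over $\mathcal{T}$ are harmless, which is where $M\le e^{\delta t^\alpha}$ with $\alpha<1/2$ enters.
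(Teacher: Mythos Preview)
The paper does not give its own proof of this proposition: it is quoted verbatim as \cite[Theorem~5.1]{BDT19} and used as a black box in the Appendix (the short proof of the half-line variant, Proposition~\ref{relaxation2}, even refers back to ``the previous proposition'' meaning the argument in \cite{BDT19}). So there is nothing in the present paper to compare your sketch against directly.

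That said, your reconstruction is the natural strategy and matches what the surrounding text (and the constants in the statement) make clear the proof in \cite{BDT19} must do: excise a window $W$ sitting $\bar v t$-deep inside the region where the $\h$-hypothesis guarantees zeros, couple the infinite-volume process to the finite-volume one on $W$ via finite speed of propagation, apply Proposition~\ref{relaxationfinie} there, and decouple the front position from the dynamics in $W$ by a second finite-speed argument. Your geometric bookkeeping (the nesting $T_t\subset\mathcal T\subset W\subset[\underline v t,M+(4\bar v-\underline v)t]$ with $\bar v t$ slack on each side, and the size bound $|W|\le e^{t^{\alpha'}}$) is correct and explains precisely why the shift $3\bar v t$ and the right endpoint $M+(4\bar v-\underline v)t$ appear.

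One small inaccuracy: the independence you claim between $\restriction{\hat\s_t}{\mathcal T}$ and $\mathbf 1_{G_1}\mathbf 1_{\{X_t=x\}}$ is not \emph{exact}. It is true that $\hat\s_t$ is deterministically a function of the rings in $W$, but $\{X_t=x\}$ is only \emph{with high probability} a function of the rings in $(-\infty,(\bar v-\underline v)t]$ --- a priori the front of the genuine process $\s$ could feel rings inside $W$. You need one more $F$-event (or, equivalently, replace $X_t$ by the front of an auxiliary process on $(-\infty,(\bar v-\underline v)t]$ with a frozen zero boundary, which coincides with $X_t$ outside an event of probability $O(e^{-ct})$). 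This only costs another exponentially small error and does not affect the outcome, but the word ``exactly'' in your write-up should be softened.
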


The same kind of result holds for a process on $\Z_-$:

\begin{prop}\label{relaxation2}
Let $q > \bar{q}$, $\alpha < 1/2$ and $\delta > 0$. There exists $c >0$ such that for any $f$ with support $[0,M]$, $\mu(f)=0$ and $||f||_\infty \le 1$, for any $t \ge 0$, $M \le e^{\delta t^\alpha}$, for all $\e_0 \in LO_-$ such that $\tilde{\e}_0 \in \h(\underline{v}t,M+(4\bar{v}-\underline{v})t,\sqrt{t})$ and $X_0 \le -M - (3\bar{v}-\underline{v})t$, then 
$$| \E\left[ f(\theta_{3\bar{v}t}\tilde{\e}_t) \right] | \le e^{-c\sqrt{t}}.$$
\end{prop}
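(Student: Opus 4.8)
The strategy is to reduce the half-line statement to the already-established statement on $\Z$ (Proposition \ref{relaxation}) by a coupling argument that exploits finite speed of propagation. The key observation is that the hypothesis $X_0 \le -M - (3\bar v - \underline v)t$ places the front far to the left of the support $[0,M]$ (after recentering, the relevant region is a window of width $O(M + \bar v t)$ sitting well to the right of $X_0$), and the boundary of $\Z_-$ is at the origin, which is to the \emph{right} of the front and even further from the support region we care about. So neither the front nor the boundary should interact with the region governing $f(\theta_{3\bar v t}\tilde\e_t)$ during $[0,t]$, up to an event of probability $e^{-c\sqrt t}$.

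First I would fix $\e_0 \in LO_-$ satisfying the hypotheses and construct an auxiliary configuration $\s_0 \in LO$ on all of $\Z$ that agrees with $\e_0$ on $[X_0, 0]$ (say) and is extended in some fixed way — e.g. equal to $1$ — on the remaining sites of $\Z_+$, so that $\tilde\s_0 = \tilde\e_0$ on the window $\h(\underline v t, M + (4\bar v - \underline v)t, \sqrt t)$ and so that $\s_0$ still has its front at $X_0$. Run $(\s_t)$ and $(\e_t)$ under the standard coupling. Then I would invoke Proposition \ref{eventF} / Corollary \ref{vitessemax}: on the complement of an event of probability $\le e^{-c\sqrt t}$, no chain of rings connects the boundary region (sites near $0$, where $\s$ and $\e$ may differ) to the region near $3\bar v t + X_t$ that determines $f(\theta_{3\bar v t}\tilde\e_t)$ within time $t$, \emph{and} the front $X_t$ for both processes stays within $[X_0 - \bar v t, X_0 + \bar v t] \subset (-\infty, 0)$, so in particular the fronts of $\s$ and $\e$ coincide and the two processes agree on the whole relevant window throughout $[0,t]$. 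Hence $f(\theta_{3\bar v t}\tilde\e_t) = f(\theta_{3\bar v t}\tilde\s_t)$ off this bad event. Then
$$
|\E[f(\theta_{3\bar v t}\tilde\e_t)]| \le |\E[f(\theta_{3\bar v t}\tilde\s_t)]| + 2\|f\|_\infty\, e^{-c\sqrt t} \le e^{-c'\sqrt t},
$$
where the first term is bounded by Proposition \ref{relaxation} applied to $\s_0$ (whose hypotheses are met by construction, since $\tilde\s_0 = \tilde\e_0$ on the required window).

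The main obstacle is the bookkeeping for the finite-speed-of-propagation step: one must check carefully that, under the hypothesis $X_0 \le -M - (3\bar v - \underline v)t$, the spatial window $[3\bar v t + X_t - M, 3\bar v t + X_t]$ (after recentering by the front at time $t$, which has itself moved by at most $\bar v t$) together with the window appearing in the $\h(\cdot,\cdot,\sqrt t)$ hypothesis stays strictly to the left of the origin with a margin growing linearly in $t$ — so that the origin-boundary discrepancy between $\e$ and $\s$ cannot propagate in, and the choice $3\bar v$ in $\theta_{3\bar v t}$ combined with $4\bar v - \underline v$ in the Zeros-window hypothesis is exactly what makes the constants close. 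This is the same linear-distance-versus-linear-speed accounting already used in \cite{BDT19}; once the inequalities are verified, the two-line union bound above finishes the proof. Note also that the hypothesis on $\tilde\e_0$ is inherited by $\tilde\s_0$ verbatim, so no new condition on the extension of $\e_0$ is needed beyond agreement on $[X_0,0]$.
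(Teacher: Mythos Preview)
Your coupling-to-$\Z$ strategy has a quantitative gap: the constants do not close. The value of $f(\theta_{3\bar v t}\tilde\e_t)$ depends on $\e_t$ at the sites $[X_t+3\bar v t,\,X_t+3\bar v t+M]$ in actual coordinates (not $[X_t+3\bar v t-M,\,X_t+3\bar v t]$ as you wrote). Using Lemma~\ref{vitessemin}, with probability $1-O(e^{-Bt})$ one has $X_t\le X_0-\underline v t$, hence the right endpoint satisfies
\[
X_t+3\bar v t+M \;\le\; X_0+(3\bar v-\underline v)t+M \;\le\; 0,
\]
by the hypothesis $X_0\le -M-(3\bar v-\underline v)t$. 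So this window lies in $\Z_-$, but with margin \emph{zero}, not a margin growing linearly in $t$. Meanwhile the discrepancy between your coupled $(\s_t)$ on $\Z$ and $(\e_t)$ on $\Z_-$ originates at sites $\ge 0$ and can propagate to $[-\bar v t,\infty)$ by time $t$; Proposition~\ref{eventF} gives nothing unless the distance exceeds $\bar v t$. To make your finite-speed argument work you would need $X_t+3\bar v t+M\le -\bar v t$, i.e.\ $X_0\le -M-(4\bar v-\underline v)t$, which is strictly stronger than the stated hypothesis. Thus the claim ``margin growing linearly in $t$'' is false and the coupling bound is trivial.

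The paper's argument avoids this entirely: it does not couple to a $\Z$ process. Instead it observes that the proof of Proposition~\ref{relaxation} (Theorem~5.1 in \cite{BDT19}) works verbatim on $\Z_-$ provided the window $[3\bar v t,\,3\bar v t+M]$ seen from the front remains inside the domain; the fixed empty boundary at $0$ is compatible with the zero boundary conditions already used in that argument. The hypothesis $X_0\le -M-(3\bar v-\underline v)t$ is exactly what guarantees $X_t+3\bar v t+M\le 0$ with probability $1-O(e^{-Bt})$, and no further margin is needed.
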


\begin{proof}
The proof is almost identical to that of the previous proposition. However, in this case we have to make sure that the interval $[3\bar{v}t, 3\bar{v}t + M]$ seen from the front does not go out of the domain $\Z_-$. Recall that with probability $1-O(e^{-Bt})$ we have $X_t - X_0 \le -\underline{v}t$. Then as long as $-X_0 + \underline{v}t \ge 3\bar{v}t + M$, we have $X_t + 3\bar{v}t + M \le 0$ with probability $1-O(e^{-Bt})$.
\end{proof}

We now prove Theorem \ref{ergo}. The proof relies on the same coupling as in \cite{BDT19}. Let us fix $\e_0 \in LO_-$. We pick $\s_0 \in LO$ an arbitrary configuration such that $X(\e_0) = X(\s_0)$. We will now prove that there exist $d^* >0$ and $c>0$ such that 
\begin{equation} \label{coupling} ||\tilde{\mu}^{\e_0}_t - \tilde{\mu}^{\s_0}_t ||_{[0,d^*t]} \le \exp\left(-c e^{(\log t)^{1/4}}\right).
\end{equation} 

Throughout the proof, $c,C$ denote generic constants independent of $\s_0, \e_0$. These constants may vary from line to line. Before diving into the complete coupling, we shall define a coupling that comes into play during the proof: the $\Lambda-$maximal coupling. If $\Lambda \subset \Z$ is finite, and $\mu, \mu'$ are two probability measures on $\Omega_-$, we define the \emph{$\Lambda$-maximal coupling} between $\mu$ and $\mu'$ as follow:
\begin{enumerate}
\item we sample $\restriction{(\s, \s')}{\Lambda \times \Lambda}$ according to the maximal coupling, i.e. the one that achieves the total variation distance of the marginals of $\mu$ and $\mu'$ on $\Omega_\Lambda$,
\item we sample $\restriction{\s}{\Z_- \setminus \Lambda}$ and $\restriction{\s'}{\Z_- \setminus \Lambda}$ independently according to their conditional distributions $\mu(.|\restriction{\s}{\Lambda})$ and $\mu'(.|\restriction{\s'}{\Lambda})$.
\end{enumerate}

Throughout this section, $\mu$ denotes the Bernoulli product measure on $\Z_-$: $\mu = \mu^{1-q}_{\Z_-}$.

For $t >0$, let us define the following quantities:
\begin{itemize}
\item $\varepsilon = \dfrac{\bar{v}}{2(\bar{v}+\underline{v})}$,
\item $t_0 = (1-\varepsilon)t$,
\item $\Delta_1 = \exp\left((\log t)^{1/4}\right)$,
\item $\Delta_2 = (\log t)^{3/4}$,
\item $\Delta = \Delta_1 + \Delta_2.$
\end{itemize}

The time interval $[0,t]$ is divided in several sub-intervals. First, during a time $t_0 = (1-\varepsilon)t$ (which is the longest phase), we aim to use the Zeros Lemma, which provides a good amount of empty sites behind the front. We then divide the remaining time $\varepsilon t$ into $N$ steps of length $\Delta$, with $N = \lfloor \frac{\varepsilon t}{\Delta} \rfloor$ and a remainder step of length $\varepsilon t - N\Delta$. Each of these steps of length $\Delta$ consists in two steps of respective lengths $\Delta_1$ and $\Delta_2$. We define $t_n = t_0 + n\Delta$ and $s_n=t_n + \Delta_1$, $n \le N$.\\

We define the interval $\Lambda_n = [3\bar{v}\Delta_1, \underline{v}s_{n} - (\bar{v}+\underline{v})\Delta_1]$ and the distance $d_n = \underline{v}t_n - (\bar{v}+\underline{v})\Delta n$. Note that $d_n$ slightly differs from the original proof.
Lastly, let us introduce the following notation. For $(A_n)$ a sequence of measurable subsets of $LO \times LO_-$, we write:

$$P_{A_n}(\cdot) := \underset{(\tilde{\s},\tilde{\e}) \in A_n}{\sup} \ \P\left(\cdot  | \ (\tilde{\s}_{t_n},\tilde{\e}_{t_n}) = (\tilde{\s},\tilde{\e}) \right).$$

Depending on the context, $t_n$ can be replaced by $s_n$ in the definition above.\\

We are now ready to explain the coupling between $\s_t$ and $\e_t$ that will lead to Theorem \ref{coupling}.
\begin{itemize}
\item At time $t_0$, we sample $\s_{t_0}$ and $\e_{t_0}$ independently according to the laws $\mu^{\e_0}_{t_0}$ and $\mu^{\s_0}_{t_0}$.
\item Suppose the coupling at time $t_n$ constructed. If the configurations $\tilde{\s}_{t_n}$ and $\tilde{\e}_{t_n}$ coincide on the interval $[1,d_n]$, then we let the evolve according to the standard coupling until time $t$. If not, we proceed in two steps. With high probability, both configurations will have zeros behind the front. More precisely, let $Z_n$ be the event:
$$Z_n = \{ \tilde{\s}_{t_n},\tilde{\e}_{t_n} \in \h(\underline{v}\Delta_1,\underline{v}t_n,\sqrt{\Delta_1}) \ \text{and} \ (X(\s_{t_n}) - X(\s_0)), (X(\e_{t_n}) - X(\e_0)) \le -\underline{v}t_n \}.$$
Here, the event $Z_n$ differs from the original proof to ensure that the interval $[\underline{v} \Delta_1, \underline{v}t_n]$ (seen from the front) does not go out of bounds.\\ 
From both the Zeros Lemma, Lemma \ref{vitessemin} and their analogs in \cite{BDT19}, we have for $t$ large enough, 
$$\P(Z_n) \ge 1 - 2(\underline{v}t_n)^2 e^{-c\sqrt{\Delta_1}} - Ce^{-c' t_n} \ge 1 - C {t_n}^2 e^{-c\sqrt{\Delta_1}}.$$
The last term of the first line comes from the fact that we ask for the inequalities $X(\s_{t_n}) \le X(\s_0) -\underline{v}t_n $ and $X(\e_{t_n}) \le X(\e_0) -\underline{v}t_n $. Note that none of the constants $c, C$ depend on $\e_0$ or $\s_0$.\\
In the following, it will be useful to also introduce the event $$Z'_n = \{ \tilde{\s}_{t_n},\tilde{\e}_{t_n} \in \h(0,3\bar{v}\Delta_1,2\underline{v}\Delta_1)\}.$$ We can already note that $Z_n \subset Z'_n$ for $t$ large enough.

\item At time $s_{n}$, we sample $\tilde{\s}_{s_{n}}$ and $\tilde{\e}_{s_{n}}$ according to the $\Lambda_n$-maximal coupling between the laws $\tilde{\mu}^{\s_{t_{n}}}_{\Delta_1}$ and $\tilde{\mu}^{\e_{t_{n}}}_{\Delta_1}$. Let $Q_n = \{ \tilde{\s}_{s_{n}} = \tilde{\e}_{s_{n}} \, \text{on} \, \Lambda_n \}$. First, note that by definition of the maximal coupling and by the Markov property, we have, for $(\s, \e) \in LO \times LO_-$:
\begin{align*}\P(Q_n^c \ | \s_{t_{n}} = \s, \e_{t_{n}} = \e) &= ||\tilde{\mu}^{\s}_{\Delta_1} - \tilde{\mu}^{\e}_{\Delta_1}||_{\Lambda_n}\\
						&\le ||\tilde{\mu}^{\s}_{\Delta_1} - \mu||_{\Lambda_n} + ||\tilde{\mu}^{\e}_{\Delta_1} - \mu||_{\Lambda_n}.
\end{align*}
Since the interval $\Lambda_n$ is "far from the front", then these two distances are small uniformly in $\tilde{\s}_{t_n}, \tilde{\e}_{t_n}$ as long as the event $Z_n$ is satisfied. Let us justify this claim by applying Proposition \ref{relaxation2}.\\

Let $\e \in LO_-$ be an initial configuration such that $\tilde{\e} \in \h(\underline{v}\Delta_1,\underline{v}t_n,\sqrt{\Delta_1})$. Assume also that $X(\eta) \le -\underline{v}t_n$. Let us check the hypotheses of Proposition \ref{relaxation2}. Take $$M = \underline{v}s_n - (\underline{v} + \bar{v})\Delta_1 - 3 \bar{v}\Delta_1.$$ Then $M + (4\bar{v}-\underline{v})\Delta_1 = \underline{v}t_n - \underline{v}\Delta_1 \le \underline{v} t_n$, which guarantees that we do have $\tilde{\e} \in \h(\underline{v}\Delta_1, M+(4\bar{v}-\underline{v})\Delta_1,\sqrt{\Delta_1})$. Next, since $X(\e) \le - \underline{v}t_n$, we find: $$-M-(3\bar{v}-\underline{v})\Delta_1 = -\underline{v}t_n + (\bar{v}+\underline{v})\Delta_1 \ge -\underline{v}t_n \ge X(\eta).$$
Finally, we do have $M \le e^{\Delta_1^\alpha}$ for $\alpha = 1/4$ for example.
\\
We can now apply Proposition \ref{relaxation2} that gives $||\tilde{\mu}^{\e}_{\Delta_1} - \mu||_{\Lambda_n} \le C e^{-c \sqrt{\Delta_1}}$ with $C,c$ independent of $\e$. We can handle the term $||\tilde{\mu}^{\s}_{\Delta_1} - \mu||_{\Lambda_n}$ the same way in order to find in the end:

$$\label{eqQn} P_{Z_n}(Q_n^c)  \le Ce^{-c \sqrt{\Delta_1}}.$$

\item Next, if the event $Q_n$ occurs, we define $\tilde{x}$ as the distance between the front and the leftmost zero of $\tilde{\s}_{s_n}$ (or $\tilde{\e}_{s_n}$) located in $\Lambda_n$. If there is none, define $\tilde{x}$ as the distance between the front and the right boundary of $\Lambda_n$.
Let $\beta$ be a Bernoulli random variable (independent of everything else constructed so far) such that $\P(\beta = 1) = e^{-2\Delta_2}$. The event $\{\beta = 1\}$ has the same probability as the event that two independent Poisson clocks do not ring during a time $\Delta_2$. We now construct the configurations at time $t_{n+1}$.
\begin{itemize}
\item  If $\beta = 1$, we consider that, seen from the front at time $s_n$, the clocks associated with the sites $0$ and $\tilde{x}$ do not ring during $\Delta_2$. First, we sample the configuration at the left of $0$ according to the standard coupling, with empty boundary condition. Let $\xi_{n+1}$ be the common increment of the front during this time, namely $\xi_{n+1} = X_{t_{n+1}} - X_{s_n}$. Next, we fix $\tilde{\s}_{t_{n+1}}(-\xi_{n+1}) = \tilde{\e}_{t_{n+1}}(-\xi_{n+1}) := 0$ and $\tilde{\s}_{t_{n+1}}(\tilde{x}-\xi_{n+1}) = \tilde{\e}_{t_{n+1}}(\tilde{x}-\xi_{n+1}) := \tilde{\s}_{s_n}(\tilde{x})$ and sample the configurations $\tilde{\s}_{t_{n+1}}$ and $\tilde{\e}_{t_{n+1}}$ according to the maximal coupling on $[1- \xi_{n+1}, \tilde{x}-1 - \xi_{n+1}]$, with boundary conditions $0$ on $0$ and $\tilde{\s}_{s_n}(\tilde{x})$ on $\tilde{x}$. To the right of this interval, we sample the configurations according to the standard coupling with the appropriate boundary conditions. 
\item If $\beta = 0$, then we let evolve $(\tilde{\s}_{s_n},\tilde{\e}_{s_n})$ for a time $\Delta_2$ via the standard coupling conditioned to have at least one ring either at $\tilde{x}$ or at $0$.
\end{itemize} 

First, let us notice that $\tilde{x}$ is likely to be not too far right inside $\Lambda_n$. Indeed, under the event $Z_n$ the law of the configuration in $\Lambda_n$ is close to the product measure at time $s_n$. Let $B_n := \{\tilde{x} \le 3\bar{v}\Delta_1 + \frac{\sqrt{\Delta_2}}{2}\}$. Then, by Proposition \ref{relaxation}:
\begin{align}\label{eqBn}
\mathcal{P}_{Z_n}\left(B_n^c \cap Q_n \right) &\le \mathcal{P}_{Z_n}\left(\restriction{(\tilde{\s}_{s_{n}})}{[3\bar{v}\Delta_1,3\bar{v}\Delta_1 + \frac{\sqrt{\Delta_2}}{2}]} \equiv 1\right)\\
	&\le p^{\frac{\sqrt{\Delta_2}}{2}} + O(e^{-c\sqrt{\Delta_1}}).
\end{align}

Now, denote by $Z''_n$ the event $\left\{\tilde{\s}_{s_n}, \tilde{\e}_{s_n} \in \h\left(\frac{\sqrt{\Delta_2}}{2}, 3\bar{v}\Delta_1,\frac{\sqrt{\Delta_2}}{2}\right)\right\}$, and recall that $Z_n \subset Z'_n$. Using the second case of Zeros Lemma \ref{zeros1} and its equivalent \cite[Lemma 4.4]{BDT19} (with $L = \sqrt{\Delta_2}/2, \ L+M = 3 \bar{v}\Delta_1, \ \el=L$), we find that: 
\begin{align}\label{EqZ}
P_{Z_n}((Z_n'')^c) &\le \frac{2 \Delta_1^2}{\sqrt{\Delta_2}}\exp(-c\sqrt{\Delta_2}) + \Delta_1 \left(3\bar{v}\Delta_1 - \frac{\sqrt{\Delta_2}}{2}\right) \exp(-c \Delta_2) \\
			&\le O(e^{-c \sqrt{\Delta_2}}).
\end{align}

With the event $Z''_n$, we are now able the use Proposition \ref{relaxationfinie} to couple the configuration at time $t_n$ on the interval $J_n := [1 - \xi_{n+1},\tilde{x} - \xi_{n+1} - 1]$.\\
First, note that $Z''_n \cap B_n \subset \{ \tilde{\s}_{s_n}, \tilde{\e}_{s_n} \in \h \left (0, \tilde{x},\sqrt{\Delta_2} \right) \}$. Let $L = 3\bar{v}\Delta_1 + \frac{\sqrt{\Delta_2}}{2} \le e^{\Delta_2^{1/4}}$. Then by Proposition \ref{relaxationfinie}, we find that for any $y \in [3 \bar{v} \Delta_1,3 \bar{v}\Delta_1 + \frac{\sqrt{\Delta_2}}{2}]$:
\begin{equation}
\P\left(\tilde{\s}_{t_{n+1}} \ne \tilde{\e}_{t_{n+1}} \ \text{on} \ [1 - \xi_{n+1},y - \xi_{n+1} - 1] \ | \ \tilde{x} = y \ \text{and} \ \tilde{\s}_{s_n}  \ \tilde{\e}_{s_n} \in \h (0, y,\sqrt{\Delta_2}) \right)  = O(e^{-c\sqrt{\Delta_2}}).
\end{equation}
Again, this bound does not depend on $y$ or the initial configurations. This in turn shows that:
\begin{equation}\label{equalityJn}
\P_{Z_n'', B_n}(\tilde{\s}_{t_{n+1}} \ne \tilde{\e}_{t_{n+1}} \ \text{on} \ J_n) = O(e^{-c\sqrt{\Delta_2}}).
\end{equation}
 Under the event $Q_n$, the configurations seen from the front at time $s_n$ coincide on the interval $\Lambda_n$. On the right part of this interval, namely  $[\tilde{x}, \underline{v}s_n - (\bar{v}+\underline{v})\Delta_1]$, we let the configurations evolve according to the standard coupling. The matching of the configurations will then persist on the sub-interval  $[\tilde{x}-\xi_{n+1}, \underline{v}s_n - (\bar{v}+\underline{v})\Delta_1 - \xi_{n+1} - \bar{v}\Delta_2]$  w.h.p. thanks to Proposition \ref{eventF}.
Note that $\underline{v}s_n - (\bar{v}+\underline{v})\Delta_1 - \bar{v}\Delta_2 = \underline{v}t_{n+1} - \bar{v}\Delta \ge d_{n+1}$. Denote by $K_n$ the interval $[\tilde{x} - \xi_{n+1}, d_{n+1}]$.
We found that:
\begin{equation}\label{equalityKn}
P_{Q_n}\left(\tilde{\s}_{t_{n+1}} \ne \tilde{\e}_{t_{n+1}} \ \text{on} \ K_n \ | \  \beta = 1 \right) \le \P(F(0,\bar{v}\Delta_2,\Delta_2) +  \P(\xi_{n+1} \ge 0)  = O(e^{-c \Delta_2}).
\end{equation}

Finally, notice that if the clock attached to the site $0$ (i.e the front at time $s_n$) does not ring, and we let the configurations at the left of this site evolve according to the standard coupling, we naturally get:
\begin{equation}\label{equalityleft}
P_{Q_n}\left(\tilde{\s}_{t_{n+1}} \ne \tilde{\e}_{t_{n+1}} \ \text{on} \ [0, -\xi_{n+1}] \ | \ \beta = 1 \right) = 0.
\end{equation}

From Equations \eqref{equalityJn}, \eqref{equalityKn} and \eqref{equalityleft}, we can conclude that:

\begin{equation}\label{eq:coupling}
P_{Q_n, Z''_n, B_n}\left( \tilde{\s}_{t_{n+1}} \ne \tilde{\e}_{t_{n+1}} \ \text{on} \ [1,d_{n+1}] \ | \ \beta = 1 \right) = O(e^{-c\sqrt{\Delta_2}}).
\end{equation}
\end{itemize}

We are now ready to estimate the probability that the coupling "succeeds". Let $M_n = \{\tilde{\s}_{t_n} = \tilde{\e}_{t_n} \ \text{on} \ [1,d_n] \}$. At time $t_{n+1}$, depending on the coupling at time $t_n$, we can write:
\begin{align}
\P(M_{n+1}^c) &\le \P(M_{n+1}^c\cap M_n^c \cap Z_n) + \P(Z_n^c) + \P(M_{n+1}^c\cap M_n) \nonumber \\
				&\le \P(M_{n+1}^c | M_n^c \cap Z_n)\P(M_n^c) + \P(Z_n^c) + \P(M_{n+1}^c\cap M_n). \label{eqrec}
\end{align}

Let us focus on the term $\P(M_{n+1}^c | M_n^c \cap Z_n)$. We condition on the events that can happen at time $s_n$:
$$\P(M_{n+1}^c | M_n^c \cap Z_n) \le P_{Z_n, M_n^c}(M_{n+1}^c | Z_n'' \cap Q_n \cap B_n) + P_{Z_n}((Z_n'')^c) + P_{Z_n}(Q_n^c) + P_{Z_n}(B_n^c \cap Q_n).$$

Out of all these terms, only the first one is (a priori) not vanishing when $t$ goes to infinity according to equations \eqref{eqBn}, \eqref{EqZ}, \eqref{eqQn}. Thanks to equation \eqref{eq:coupling}, for $t$ large enough:
\begin{align}
P_{Z_n, M_n^c}(  M_{n+1}^c  | Z_n'' \cap Q_n \cap B_n) &\le 1 - \P(\beta = 1) \left(1 - P_{M_n^c \cap Z_n} \left(M_{n+1}^c | \{\beta = 1 \} \cap Z_n'' \cap Q_n \cap B_n \right) \right) \\
 & \le 1 - \frac{1}{2} e^{-2 \Delta_2}.
\end{align}

Back to the initial inequality \eqref{eqrec}, we can plug our estimate for the first term, and easily bound the remaining two terms to find:

\begin{equation}
\P(M_{n+1}^c) \le \left(1 - \frac{1}{2}e^{-2\Delta_2}\right) \P(M_n^c) + Ce^{-c\Delta}
\end{equation}

It only remains to solve this recursive equation with the values of $N,\Delta_1,\Delta_2$ we chose. We find:
\begin{align}
\P(M_N^c) &\le \left(1 - \frac{1}{2}e^{-2\Delta_2} \right)^N + C 2^{-N+1} e^{-c\Delta - 2(N-1)\Delta_2}\\
		&\le O(e^{-c\Delta_1}).
\end{align}

Let us now conclude the proof. At time $t_N$, with probability $\P(M_N)$ the configurations $\tilde{\s}_{t_N}$ and $\tilde{\e}_{t_N}$ match on the interval $[1,d_N]$. From time $t_N$ to time $t$, we let the configurations evolve according to the standard coupling. In the same way as we did before, we can estimate the interval on which the configurations are equal at time $t$ given the event $M_N$.

\begin{align}
\P(\tilde{\s}_t = \tilde{\e}_t \ \text{on} \ [1,d^*t]) &= \P(\tilde{\s}_t = \tilde{\e}_t \ \text{on} \ [1,d^*t] \  | \ M_N) \P(M_N) \\
				&\ge \P( \{X_t \le X_{t_N}\} \cap F(d_N, d^*t, t - t_N)^c) \P(M_N).
\end{align}
It only remains to chose a $d^*$ such that the event $F(d_N, d^*t, t - t_N)$ has low probability. Let $d^* = \frac{\underline{v}}{4}$. Then using $t - t_N \le \Delta$ and replacing $\varepsilon$ and $d^*$ by their values, we find:
\begin{equation}  d_N - d^*t \ge \frac{\underline{v}t}{4} - \underline{v}t \ge \bar{v}\Delta \ge \bar{v}(t - t_N),
\end{equation}
which implies $\P(F(d_N, d^*t, t - t_N)) = O(e^{-ct})$. The event $\{X_t \le X_{t_N}\}$ has probability $1 - O(e^{-c \Delta})$ so we find in the end the announced estimate:
\begin{equation}\label{eqfinale} \P(\tilde{\s}_t \ne \tilde{\e}_t \ \text{on} \ [1,d^*t]) = O(e^{-c\Delta}).
\end{equation} 
The existence of an invariant measure $\nu$ comes from the compacity of the set of probability measures on $\Omega$. The uniqueness and the convergence estimate of Theorem \ref{ergo} come from Equation \eqref{eqfinale}. \qed

\end{appendices}

\textbf{Acknowledgments.} The author would like to thank Oriane Blondel for her many useful advice and Fabio Toninelli for his careful reading. This project has been supported by the ANR grant LSD (ANR-15-CE40-0020).

\bibliographystyle{plain}
\bibliography{bibliographie}

\end{document}